\documentclass[11pt]{article}
\usepackage{graphicx} 
\usepackage[english]{babel}
\usepackage{mathtools}
\usepackage{amsfonts}
\usepackage{amsmath}
\usepackage[margin=1in]{geometry}
\usepackage{amsthm}
\usepackage{hyperref}
\hypersetup{colorlinks=true,linkcolor=black,citecolor=[rgb]{0.8,0.1,0.3}}
\usepackage{amssymb}
\usepackage{dsfont}
\usepackage{xspace}
\usepackage{gensymb}

\newcommand{\norm}[1]{\lvert \lvert #1 \rvert \rvert}
\newcommand{\dotproduct}[2]{\vec{#1} \cdot \vec{#2}}
\newcommand{\vvector}[3]{\left( \underbrace{#1,\cdots}_{#2},#3,\cdots,#3\right)}

\usepackage{tikz}
\usepackage{wrapfig}
\usepackage{subcaption}
\usepackage{comment}
\usepackage{empheq}

\graphicspath{{images/}}

\newtheorem{theorem}{Theorem}[section]
\newtheorem{claim}{Claim}[section]
\newtheorem{conjecture}[theorem]{Conjecture}
\newtheorem{lemma}[theorem]{Lemma}
\newtheorem{proposition}[theorem]{Proposition}
\newtheorem{corollary}[theorem]{Corollary}
\newtheorem{remark}[theorem]{Remark}
\newtheorem{observation}[theorem]{Observation}
\newtheorem{open}{Open Question} 

\theoremstyle{definition}
\newtheorem{definition}{Definition}[section]

\DeclareMathOperator{\poly}{poly}

\newcommand{\apx}{$\mathsf{APX}$\xspace}
\newcommand{\ptas}{$\mathsf{PTAS}$\xspace}
\newcommand{\np}{$\mathsf{NP}$\xspace}
\DeclareMathOperator{\R}{\mathbb{R}}
\usepackage{todonotes}

\usepackage{url}
\usepackage{setspace}
\setstretch{1.05}
\setlength{\parindent}{2em}
\setlength{\parskip}{0.25em}

\title{\textbf{On Steiner Trees of the Regular Simplex}}
\author{Henry Fleischmann\\ University of Cambridge\and Guillermo A. Gamboa Q. \\ Charles University\and Karthik C.\ S.\\ Rutgers University\and  Josef Matějka\\ Charles University\and Jakub Petr\\ Charles University}

\date{}
\begin{document}

\maketitle

\begin{abstract}
In the Euclidean Steiner Tree problem, we are given as input a set of points (called \emph{terminals}) in the $\ell_2$-metric space and the goal is to find the minimum-cost tree connecting them. Additional points (called \emph{Steiner points}) from the space can be introduced as nodes in the solution. \vspace{0.1cm}

The seminal works of Arora [JACM'98] and Mitchell [SICOMP'99] provide a Polynomial Time Approximation Scheme (PTAS) for solving the Euclidean Steiner Tree problem in fixed dimensions. However, the problem remains poorly understood in higher dimensions (such as when the dimension is logarithmic in the number of terminals) and ruling out a PTAS for the problem in high dimensions is a notoriously long standing open problem (for example, see Trevisan [SICOMP'00]).
Moreover, the explicit construction of optimal Steiner trees remains unknown for almost all well-studied high-dimensional point configurations. Furthermore, a vast majority the state-of-the-art structural results on (high-dimensional) Euclidean Steiner trees were established in the 1960s, with no noteworthy update in over half a century.\vspace{0.1cm}

In this paper, we revisit high-dimensional Euclidean Steiner trees, proving new structural results. We also establish a link between the computational hardness of the Euclidean Steiner Tree problem and understanding the optimal Steiner trees of regular simplices (and simplicial complexes), proposing several conjectures and showing that some of them suffice to resolve the status of the inapproximability of the Euclidean Steiner Tree problem. Motivated by this connection, we investigate optimal Steiner trees of regular simplices, proving new structural properties of their optimal Steiner trees, revisiting an old conjecture of Smith [Algorithmica'92] about their optimal topology, and providing the first explicit, general construction of candidate optimal Steiner trees for that topology.
\end{abstract}

 \thispagestyle{empty}
 
\clearpage
 \setcounter{page}{1}

\section{Introduction} \label{sec: intro}
Given a set of points in space (called \textit{terminals}), a \textit{Steiner tree} of those points is a tree connecting those points. In addition to the terminals, the tree may contain additional points from the ambient space (called \textit{Steiner points}). Finding the minimum cost Steiner tree is one of the most fundamental problems in Computer Science, Operations Research, and Combinatorial Optimization \cite{ljubic2021solving}. For example, Steiner trees arise naturally in network design, the design of integrated circuits, location problems, machine learning, computer vision, systems biology, and bioinformatics \cite{cheng2013steiner,cho2001steiner,hwang1992steiner,lengauer2012combinatorial,resende2008handbook,noormohammadpour2017dccast, backes2012integer,ideker2002discovering,russakovsky2010steiner, tuncbag2016network}.

In this work, we focus on the Euclidean Steiner tree problem, perhaps the most fabled setting of the problem, where the terminals lie in the Euclidean metric space. It was first studied in full generality at least as far back as 1811 and has been discussed in letters of Gauss. For three points, the Fermat-Toricelli problem, optimal Steiner trees were characterized completely as early as the 1600s by Toricelli. The interested reader may see \cite{brazil2014history} for more details on the history of the Euclidean Steiner tree problem.

Jarn\'{i}k and K\"{o}ssler \cite{Jarnik1934} first derived most of the known fundamental structural properties of Euclidean Steiner trees in 1934. The seminal work of Gilbert and Pollak \cite{gilpol} gave additional proofs of these properties and several others. Their structural results essentially remain the best existing tools for analyzing high-dimensional Euclidean Steiner trees.

\vspace{-0.4cm}\paragraph{Computational Aspects.}  Building on the work of Garey and Johnson \cite{Garey_Johnson_1977} wherein they proved that the Rectilinear Steiner Tree problem  (i.e., terminals are in $\ell_1$-metric space) is \np-hard, in a joint work with Graham \cite{Garey_Graham_Johnson_1977}, they proved that the Euclidean Steiner Tree problem is also \np-hard by a clever planar gadget construction. In their seminal works, Arora \cite{Arora_1998} and Mitchell \cite{M99} gave a polynomial-time approximation scheme (\ptas) for the Steiner Tree problem in all $\ell_p$-metric spaces, albeit in constant dimensions. However, their work left open the hardness of approximation of the Euclidean Steiner Tree problem  in high dimensions (such as when the dimension is at least logarithmic  in the number of terminals). Trevisan \cite{Trevisan00} showed that the Rectilinear   Steiner Tree problem is \apx-hard  by a reduction from the Steiner Tree problem  in the Hamming metric (which was previously shown to be \apx-hard \cite{Day_Johnson_Sankoff_1986}).  Trevisan's reduction appeals to the Hamming metric's discrete combinatorial structure. In fact, an even simpler proof can be derived from much earlier known structural results about Hamming and Rectilinear Steiner trees (e.g., Lemma 1 of \cite{Day_Johnson_Sankoff_1986} combined with Theorem 4 of \cite{Hanan_1966}).

Proving the \apx-hardness of the Steiner Tree problem in $\ell_p$-metrics, for $p>1$, appears to require engaging with the delicate structure of $\R^d$ directly. Due to the \ptas for the problem in fixed dimensions, any such argument requires dealing with truly high-dimensional hard instances. Recently, Fleischmann et al.\ \cite{Fleischmann2023} recently showed that the Steiner Tree problem is \apx-hard in the $\ell_{\infty}$-metric. They also showed that when the set of candidate Steiner points is provided as part of the input (as a special case of the graph Steiner Tree problem), then this discrete variant of the  Steiner Tree problem is \apx-hard in all $\ell_p$-metric spaces. However, the hardness of approximation of the classical Euclidean Steiner Tree problem remains unresolved.
\begin{open} \label{open: apx-hardness}
Is the Euclidean Steiner Tree problem \apx-hard in high dimensions?
\end{open}

 Existing techniques in the area paves the way for a simple approach to prove the \apx-hardness of the Euclidean Steiner Tree problem in high dimensions. Trevisan \cite{Trevisan00}
uses a simple gap-preserving reduction from the Vertex Cover problem, and a similar reduction applies in the setting where the candidate Steiner points are provided as input. 

As motivation, we sketch a simple reduction framework for proving \apx-hardness of the Euclidean Steiner Tree problem. We formalize this in Appendix \ref{sec: simplicial complex conj}. We reduce from the Vertex Cover problem on bounded degree triangle-free graphs. Namely, there exists some $\rho > 0$ such that it is \np-hard to decide whether a triangle-free graph $G=(V,E)$ has a vertex cover of size $r|E|$ or all of its vertex covers are of size at least $(1 + \rho)\cdot r|E|$ (for some $r,\rho>0$).    Now, we embed $G$ into $\R^{|V|}$ by embedding each edge $\{u,v\}\in E$ as $\mathbf{e}_u + \mathbf{e}_v$, where $\mathbf{e}_u$ is the standard basis vector with $1$ in coordinate indexed by $u$ and $0$ elsewhere. Thus, each edge is embedded as its characteristic vector. The embedding of the set of edges incident to a single vertex forms the regular simplex of side length $\sqrt{2}$. The point configuration as a whole composes of the vertices of a regular simplicial complex (where we take the union of the simplices associated with each vertex).

Observe that a vertex cover of $G$ of size $r|E|$ induces a partition of the embedded simplicial complex into $r|E|$ regular simplices. Moreover, any partition of the simplicial complex into regular simplices induces a vertex cover in $G$ of the same size (using that $G$ is triangle-free). Then, it would suffice to show that there exists $s, \beta > 0$ such that the following holds:
\begin{enumerate}
    \item Any embedded point configuration forming the vertices of a regular simplicial complex partitionable into $r|E|$ regular simplices has a Steiner tree of cost at most $s|E|$.
    \item Any embedded point configuration forming the vertices of a regular simpicial complex such that the minimum size of a partition into regular simplices is at least $(1 + \rho)r|E|$ has minimum Steiner tree of cost at least $(1 + \beta)s|E|$.
\end{enumerate}

Namely, this would imply that the Euclidean Steiner Tree problem is \np-hard to approximate within a factor less than $(1 + \beta)$. Why might we expect this reduction to even be gap-preserving? On the one hand, this reduction, interpreted instead in the $\ell_1$-metric \emph{is} used to show \apx-hardness of the Rectilinear Steiner Tree problem (e.g., see \cite{Trevisan00}). 

Additionally, heuristically, regular simplices have incredibly efficient Steiner trees, so having a valid Steiner tree composed of few Steiner trees of a regular simplex should result in especially low cost optimal Steiner trees. To understand this, we consider the notion of \textit{Steiner ratios}: the Steiner ratio of a finite point configuration  $P \subset \R^d$ is the ratio of the cost of its optimal Steiner tree to the cost of its minimum spanning tree.
For example, the Steiner ratio of the vertices of an equilateral triangle is $\sqrt{3}/2$—the optimal Steiner tree is formed by connecting the three vertices to a Steiner point at the center of the triangle. The Steiner ratio of a point configuration measures the efficiency of its Steiner tree relative to trivially connecting the points in a minimum spanning tree. Gilbert and Pollak famously conjectured that the vertices of an equilateral triangle, i.e., the vertices of a $2$-dimensional regular simplex, form the most efficient Steiner tree among all planar point configurations.
\begin{conjecture}[Gilbert-Pollak Steiner Ratio Conjecture \cite{gilpol}]
    The minimum Steiner ratio over planar point configurations is $\sqrt{3}/2$. 
\end{conjecture}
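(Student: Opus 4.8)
The plan is to establish the lower bound $L_s(P) \ge \tfrac{\sqrt{3}}{2}\, L_m(P)$ for every finite planar point set $P$, where $L_s(P)$ and $L_m(P)$ denote the lengths of the minimum Steiner tree and the minimum spanning tree of $P$; since the equilateral triangle attains ratio exactly $\sqrt{3}/2$, this inequality suffices to pin the minimum at $\sqrt{3}/2$. First I would reduce to the case where the Steiner minimal tree of $P$ is \emph{full}, i.e.\ every terminal is a leaf and every Steiner point has degree three with its three incident edges meeting at $120^{\circ}$. This reduction is standard: decompose the Steiner minimal tree of $P$ into its full components $T_1,\ldots,T_k$ spanning terminal subsets $P_1,\ldots,P_k$, so that $L_s(P)=\sum_i L_s(P_i)$; gluing together spanning trees of the $P_i$ along their shared terminals gives a connected spanning subgraph of $P$, whence $L_m(P)\le\sum_i L_m(P_i)$. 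Thus if the bound holds on each full component it holds on $P$, and it is enough to prove the inequality for point sets admitting a full Steiner tree.

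Fixing the combinatorial topology of a full Steiner tree on $n$ terminals, I would follow the approach of Du and Hwang. Normalizing $L_s=1$, I regard the terminal positions as ranging over the family of configurations consistent with this fixed full topology subject to the $120^{\circ}$ constraints at the Steiner points. The key move is to parametrize this family as a polytope $\mathcal{X}$ --- intuitively, by the directions of the terminal edges --- and to study the minimum spanning tree length $L_m$ as a function on $\mathcal{X}$. The heart of the argument is the structural claim that, under this normalization, $L_m$ is a \emph{concave} function of the configuration parameters. Granting concavity, the minimum of $L_m$ over $\mathcal{X}$ is attained at an extreme point of $\mathcal{X}$.

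It then remains to analyze the extreme points. These should correspond to degenerate, highly symmetric configurations in which terminals collapse onto a sublattice of the equilateral triangular lattice (the \emph{critical} configurations), for which the ratio $L_s/L_m$ can be computed directly and bounded below by $\sqrt{3}/2$, with equality approached by the equilateral triangle itself. Combining the three ingredients --- reduction to full trees, minimization at an extreme point, and the explicit lattice computation --- would yield $L_m\ge\tfrac{2}{\sqrt{3}}$ whenever $L_s=1$, which is the desired inequality.

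The main obstacle, by far, is the concavity claim. The configuration family need not be a polytope in the naive coordinates, the dependence of $L_m$ on the parameters is only piecewise smooth (the combinatorial type of the spanning tree changes across $\mathcal{X}$), and concavity can fail precisely on the pieces where it is most needed. This is exactly the point at which the celebrated Du--Hwang argument was subsequently found to be incomplete: establishing a genuine concavity or minimum-at-extreme-point principle for $L_m$ over the correct configuration space --- or substituting some alternative convexity structure that survives the changes in spanning-tree combinatorics --- is the crux, and is the reason this innocuously stated conjecture remains unresolved.
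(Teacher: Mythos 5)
The statement you have been asked to prove is the Gilbert--Pollak conjecture, which the paper does not prove: it is stated explicitly as an open conjecture, noted to have resisted proof for nearly fifty years despite at least one high-profile incorrect attempt \cite{Ivanov_Tuzhilin_2012}. There is therefore no proof in the paper to compare your proposal against, and any purported proof must be examined on its own terms.

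Your sketch is essentially the Du--Hwang strategy, and---as you candidly admit in your final paragraph---it contains a genuine gap at exactly the step where that strategy is known to break down. The reduction to full components is sound and standard, and the computation at the critical lattice configurations is routine. But the load-bearing claim, that the minimum spanning tree length $L_m$ is a concave function over a polytope of configurations with fixed full topology (so that its minimum over the family is attained at an extreme point), is nowhere established in your argument, and it is precisely the assertion whose justification in the Du--Hwang paper was later found to be incomplete: the configuration family is not naturally a polytope in the proposed coordinates, $L_m$ is only piecewise smooth because the combinatorial type of the minimum spanning tree changes across the parameter space, and no concavity or substitute minimum-at-extreme-point principle is known to survive those transitions. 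Without that ingredient the chain ``reduction to full trees $\Rightarrow$ minimization at an extreme point $\Rightarrow$ explicit lattice computation'' does not close. Identifying the obstacle accurately, as you have done, is not the same as overcoming it; what you have written is a survey of a known failed approach rather than a proof.
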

This important conjecture remains open after nearly 50 years (despite at least one high-profile incorrect proof \cite{Ivanov_Tuzhilin_2012}). They further conjectured that the vertices of a regular simplex have the minimum Steiner ratio in higher dimensions. This is false: for example, many regular simplices overlapping on a common vertex has a smaller Steiner ratio \cite{Du_Smith_1996}. Nonetheless, the constructions of all known counterexamples require increasing the number of points in the configuration.
We conjecture that this is necessary: the vertices of a regular simplex have the minimum Steiner ratio over all point configurations on at most that many terminals.

\begin{conjecture}[Simplex is the Best] \label{conj: simplex is the best}
    The $d+1 $ vertices of a $d$-dimensional regular simplex have the minimum Steiner ratio over all point configurations of $d+1$ points in Euclidean space.
\end{conjecture}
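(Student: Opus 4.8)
The plan is to cast the conjecture as a single scale-invariant optimization and then locate its minimizer. Write $\rho(P) = \mathrm{SMT}(P)/\mathrm{MST}(P)$ for a configuration $P$ of $n = d+1$ points. Since $\rho$ is invariant under scaling, translation, and rotation, and since any $n$ points lie in an affine subspace of dimension at most $d$, I would first reduce to minimizing $\rho$ over configurations in $\mathbb{R}^d$ normalized so that $\mathrm{MST}(P) = 1$. Both $\mathrm{SMT}$ and $\mathrm{MST}$ are continuous, and the normalization forces $\mathrm{diam}(P) \le 1$ (the $\mathrm{MST}$ path between the two extreme points is at least their distance), so the normalized configuration space is compact modulo isometry and a minimizer $P^{\ast}$ exists. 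The goal then becomes to show that $P^{\ast}$ is congruent to the regular simplex $\Delta_d$; the cases $d \le 2$ are classical, so the interest is in general $d$.

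To analyze $P^{\ast}$, I would stratify the configuration space by the combinatorial type of the optimal Steiner tree and of the minimum spanning tree, invoking the Gilbert--Pollak structural theorems: every Steiner point has degree three with $120^{\circ}$ angles, terminals have degree at most three, and an $n$-terminal Steiner tree uses at most $n - 2 = d - 1$ Steiner points. This leaves finitely many full Steiner topologies to consider. Within each stratum the optimal Steiner length for a fixed topology is a \emph{convex} function of the terminal coordinates (it is the partial minimization over the Steiner-point positions of a jointly convex sum of Euclidean norms), as is $\mathrm{MST}$ for a fixed spanning-tree topology. I would exploit this convexity together with the first-order ($120^{\circ}$ and stationarity) conditions to write down the Karush--Kuhn--Tucker system $\nabla \mathrm{SMT} = \lambda\, \nabla \mathrm{MST}$ satisfied by $P^{\ast}$, and argue that a minimizer either sits on a topology-change boundary or at a fully symmetric interior critical point.

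A second, complementary route is induction on $n$, which I expect cleanly disposes of all configurations whose Steiner tree is \emph{not} full. If the optimal Steiner tree of $P^{\ast}$ splits at a terminal of degree at least two into full components on strictly smaller terminal sets $P_1, \dots, P_k$ (pairwise sharing only that terminal), then $\mathrm{SMT}(P^{\ast}) = \sum_i \mathrm{SMT}(P_i)$, while the union of the $\mathrm{MST}$'s of the $P_i$ is a spanning tree of $P^{\ast}$, giving $\mathrm{MST}(P^{\ast}) \le \sum_i \mathrm{MST}(P_i)$. Hence $\rho(P^{\ast}) \ge \min_i \rho(P_i) \ge \min_i \rho(\Delta_{|P_i|-1})$ by induction, and since $\rho(\Delta_m)$ is decreasing in $m$ and each $|P_i| - 1 < d$, this already yields $\rho(P^{\ast}) > \rho(\Delta_d)$. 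Thus the only binding case is a \emph{full} Steiner topology on all $d+1$ terminals, where I would try to show that the symmetric critical configuration forced by the KKT system equalizes all pairwise distances and is therefore the regular simplex.

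The hard part will be twofold. First, the target value $\rho(\Delta_d)$ is itself not known in closed form: even identifying the optimal Steiner topology of the regular simplex is the content of Smith's conjecture revisited later in this paper, so any complete argument must be interleaved with understanding the simplex's own Steiner tree. Second, and more seriously, the map $P \mapsto \rho(P)$ is non-smooth exactly where topologies change, and ruling out non-simplex minimizers on those boundaries is precisely the delicate, long-open difficulty that makes even the planar Gilbert--Pollak argument so treacherous. Establishing that every critical and boundary configuration in the full-topology case has ratio at least $\rho(\Delta_d)$ is the crux, and I expect it to demand a genuinely new lower bound on $\mathrm{SMT}$ in terms of $\mathrm{MST}$ tailored to the $n = d+1$ regime.
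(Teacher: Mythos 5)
The statement you are addressing is an open conjecture: the paper offers no proof of it, explicitly frames it as a strengthening-by-restriction of the generalized Gilbert--Pollak conjecture (itself unresolved for half a century), and only reports computational evidence for the weaker Conjecture~\ref{conj: simplex is the best graph embed} up to $m=10$. Your submission is accordingly a research program rather than a proof, and you concede as much: the entire content of the conjecture is concentrated in the step you label the ``crux'' --- showing that in the full-topology case every critical and boundary configuration has ratio at least $\rho(\Delta_d)$ --- and for that step you supply no argument, only the expectation that ``a genuinely new lower bound'' will be needed. The compactness reduction, the stratification by topology, the convexity of the fixed-topology length, and the mediant-inequality reduction of non-full configurations to smaller instances are all sound and standard, but they reduce the problem to exactly the part that is open; they do not make progress on it.

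Two further gaps deserve naming because they would remain even if the crux were resolved. First, your induction step invokes the monotonicity of $\rho(\Delta_m)$ in $m$, but this is itself unknown: as Section~\ref{sec: intro} emphasizes, even the optimal Steiner tree (hence the exact Steiner ratio) of the regular $m$-simplex is not known for general $m$, so ``$\rho(\Delta_m)$ is decreasing'' cannot be cited as a fact, and your chain $\rho(P^{\ast}) \geq \min_i \rho(\Delta_{|P_i|-1}) \geq \rho(\Delta_d)$ breaks at the second inequality. Second, the KKT framing of the full case is shakier than you present it: at the conjectured minimizer $\Delta_d$ all $\binom{d+1}{2}$ edges are tied, so \emph{every} spanning tree is minimal and $\mathrm{MST}$ is non-differentiable there; the regular simplex sits on a maximally degenerate stratum of your own stratification, not at a smooth interior critical point. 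Any first-order analysis would have to be carried out with subdifferentials at precisely the point where the active-topology set is largest, and nothing in your sketch explains why a symmetric configuration is forced there rather than merely admissible.
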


While this is akin to the generalized Gilbert-Pollak conjecture in that it is about point configurations minimizing the Steiner ratio, the key difference is that we bound the number of terminals, not the number of dimensions. This is much more natural from a computational perspective. The natural dimension bound is then that any $d+1$ points can be embedded into $d$-dimensional space. Importantly, regular simplices meet this bound.

This conjecture would have structural implications for our efforts to prove \apx-hardness of Euclidean Steiner Tree. The following weaker version of Conjecture \ref{conj: simplex is the best} is also relevant to this reduction strategy.
\begin{conjecture}[Simplex is the Best for Graph Embeddings] \label{conj: simplex is the best graph embed}
Over all graphs with $m$ edges, the embedding\footnote{Here we allude to embedding each edge by it's characteristic vector, as detailed in the aforementioned reduction from the Vertex Cover problem.} of the star graph on $m$ edges has the minimum cost Steiner tree.
\end{conjecture}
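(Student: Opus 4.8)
The plan is to prove the stronger quantitative statement that the embedded star graph realizes the pointwise-smallest metric among all $m$-edge graph embeddings, and then to transfer optimal Steiner trees along a distance-nonincreasing map. First I would record the elementary distance computation: for two distinct edges $e,e'$ of a graph, the squared Euclidean distance between their characteristic vectors $\chi_e,\chi_{e'}$ equals the size of the symmetric difference $|e \,\triangle\, e'|$, which is $2$ when $e,e'$ share a vertex and $4$ when they are disjoint. Hence every pairwise distance in any $m$-edge graph embedding lies in $\{\sqrt{2},2\}$, attaining $\sqrt{2}$ precisely when the two edges are adjacent. In the star $K_{1,m}$ all $m$ edges are pairwise adjacent, so its embedding is the regular simplex with all pairwise distances equal to $\sqrt{2}$ (concretely, isometric to $\{\mathbf{e}_1,\dots,\mathbf{e}_m\}\subset \R^m$). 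The key structural point is that the star's metric is dominated by that of every other graph: for any $m$-edge graph $G$, any bijection $\sigma$ between the $m$ terminals $\{p_i\}$ of $G$ and the $m$ terminals $\{q_i\}$ of the star satisfies $\norm{q_{\sigma(i)}-q_{\sigma(j)}} = \sqrt{2} \le \norm{p_i - p_j}$ for all $i \ne j$, regardless of the choice of $\sigma$.

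Given this domination, the map $f_0(p_i) = q_{\sigma(i)}$ sending $G$'s terminals to the star's terminals is $1$-Lipschitz. Since both configurations live in (finite-dimensional) Euclidean, hence Hilbert, spaces, Kirszbraun's extension theorem lets me extend $f_0$ to a $1$-Lipschitz map $F$ on the entire ambient space $\R^{V(G)}$. I would then take an optimal Steiner tree $T^\ast$ of the embedding of $G$ and push it forward under $F$. Each segment $[a,b]$ of $T^\ast$ has image lying in a curve of length at most $\norm{a-b}$, so replacing each such image by the chord $[F(a),F(b)]$ yields a connected network $N$ spanning $F(\{p_i\}) = \{q_{\sigma(i)}\}$ (the star's terminals) with total length at most $\sum_{(a,b)} \norm{a-b} = \mathrm{cost}(T^\ast)$. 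Since $N$ is connected and contains all star terminals, the optimal Steiner tree of the star costs at most $\mathrm{len}(N) \le \mathrm{cost}(T^\ast)$. This yields $\mathrm{cost}(\text{star}) \le \mathrm{cost}(G)$ for every $m$-edge graph $G$, which is exactly the conjecture.

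The two steps requiring care — and which I expect to be where the argument must be made rigorous rather than where genuine difficulty lies — are the invocation of Kirszbraun's theorem (to obtain a global short extension $F$, not merely a short map on the finite terminal set) and the claim that short maps do not increase Steiner-tree cost (the straightening argument above, most cleanly phrased via the $1$-dimensional Hausdorff measure of the connected set $F(T^\ast)$, which contains all targets). Both are standard once set up, and the whole approach hinges on the single structural observation that the star embedding has a uniform metric (all distances equal to the minimum possible value $\sqrt{2}$) and is therefore dominated by every other embedding's metric. It is worth emphasizing that this dominating feature is precisely what makes the graph-embedding version tractable, whereas the general Conjecture \ref{conj: simplex is the best} — which concerns the Steiner \emph{ratio} of arbitrary point configurations that admit no common metric lower bound realized by a simplex — appears to require the full strength of Steiner-ratio machinery and is not amenable to this transfer argument.
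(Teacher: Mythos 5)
A point of order first: the paper does not prove this statement. It is posed as a conjecture, verified computationally up to $m=10$, and Section~\ref{sec: simplex is the best graph embed} establishes only partial progress (Lemmas~\ref{lem: diam 2} and~\ref{lem: disjoint neighborhoods}: any minimizer must have all closed vertex neighborhoods pairwise intersecting). So there is no proof in the paper to compare against, and your proposal is claiming strictly more than the paper does.

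That said, I cannot find a flaw in your argument, and its two load-bearing steps are both genuine theorems. (i) Every pairwise distance in an $m$-edge graph embedding is $\sqrt{2}$ or $2$, and the star attains $\sqrt{2}$ everywhere, so \emph{every} bijection of terminal sets onto the star's terminals is non-expansive. (ii) Steiner minimal tree length cannot increase under a non-expansive surjection of terminal sets between Euclidean spaces: by the one-point Kirszbraun extension lemma, applied once per Steiner point of an optimal tree of $f(G)$, each Steiner point acquires an image so that no edge of the tree lengthens, and the resulting straight-line network is connected and contains all of the star's terminals, hence dominates the star's Steiner minimal tree length. Your argument in fact yields the stronger statement that the regular $m$-point simplex of side $\delta$ minimizes Steiner tree length over all $m$-point configurations of minimum pairwise distance at least $\delta$; I checked that this does not contradict the Du--Smith counterexamples \cite{Du_Smith_1996} cited in Section~\ref{sec: intro} (those concern the Steiner \emph{ratio} and compare against simplices on fewer points), so there is no hidden inconsistency. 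The contrast with the paper's Lemma~\ref{lem: disjoint neighborhoods} is instructive: that lemma is also a ``push the optimal tree through a contraction'' argument, but it insists on an explicit, globally defined contraction (the coordinate-wise max map) whose image is again a graph embedding; this forces the disjoint-closed-neighborhoods hypothesis and leaves graphs such as $C_5$ out of reach. You drop the requirement of staying inside the class of graph embeddings and outsource the placement of Steiner-point images to Kirszbraun. If you write this up, state precisely the version of Kirszbraun you use and the (easy) extraction of a spanning subtree from the image network; given that the authors left this as a conjecture, the result warrants careful independent verification, but as written I see no gap.
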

Note that, since the graphs all have the same number of edges, their minimum spanning tree costs are all the same. Hence, Conjecture \ref{conj: simplex is the best graph embed} can also be viewed as a conjecture about the point configuration with the minimum Steiner ratio. In this sense, it is a restricted version of Conjecture \ref{conj: simplex is the best}. We have verified the weaker Conjecture \ref{conj: simplex is the best graph embed} computationally up to $m = 10$ using the exact algorithm of Smith \cite{smith}.

The aforementioned reduction strategy to Euclidean Steiner tree problem appears deceptively simple to employ and either verify or reject. However, there is a fundamental obstacle: \textbf{we do not know how to construct the optimal Steiner tree for any  (non-trivial) high-dimensional Euclidean point configuration.} Perhaps the simplest possible point configuration in $\R^{d+1}$ is the collection of standard basis vectors. These points are precisely the vertices of a $d$-dimensional regular simplex. Even the optimal Steiner tree of the regular simplex is unknown, although Gilbert and Chung \cite{gilchung} constructed candidate optimal trees in the special case of the number of vertices being a sum of up to three powers of two. For almost every other natural high-dimensional point configuration, we are completely ignorant of the structure of the optimal Steiner tree. 

The objective of this paper is to revitalize this important line of work in the hope of ultimately resolving Open Question \ref{open: apx-hardness}. To achieve this, we need to extend our understanding of high-dimensional Euclidean Steiner trees beyond the results of the previous century.

\subsection{Organization of the paper}
In Section~\ref{sec: preliminaries}, we define the terminology we will use in discussing the Euclidean Steiner tree problem, establish notational conventions, and recall several relevant classical structural properties of Steiner trees. In Section~\ref{sec: new props}, we prove three new structural results about Euclidean Steiner trees: two of them extend previous results of \cite{gilpol} and the third provides a simple condition for restricting the topologies of optimal Steiner trees. In Section~\ref{sec: top conj}, we discuss a little-known conjecture of Smith \cite{smith} about the topology of optimal Steiner trees of the regular simplex, motivating it from a new viewpoint and describing its interdisciplinary connections to existing work in chemical graph theory and computational biology. In Section~\ref{sec: constructing trees}, we prove several new structural results about the optimal Steiner trees of the regular simplex and show how to explicitly construct Steiner trees of the conjectured optimal topologies. In Section~\ref{sec: simplex is the best graph embed}, we revisit Conjecture \ref{conj: simplex is the best graph embed} from Section~\ref{sec: intro}, making partial progress toward the conjecture. Finally, in Appendix~\ref{sec: simplicial complex conj}, we state an analytic conjecture about the the Steiner tree problem on regular simplicial complexes,  proving that it implies \apx-hardness of the Euclidean Steiner Tree problem (Open Problem \ref{open: apx-hardness}).

\section{Preliminaries} \label{sec: preliminaries}
In this paper, we consider the Euclidean Steiner Tree problem. The problem is as follows. Given $P \subset \R^d$ finite, find $S \subset \R^d$ such that the minimum cost spanning tree $T$ of $P \cup S$ has the infimum cost of all trees over all choices of $S$. The length of each edge in the tree is the Euclidean distance between its endpoints. The elements of $P$ are \textit{terminals}, the additional points in $S$ are \textit{Steiner points}, and any spanning tree of $P \cup S$ for any choice of $S$ is a \textit{Steiner tree}. An \textit{optimal Steiner tree} for $P$ is a Steiner tree of minimum cost for $P$. 

For much of the paper, we will consider Steiner trees of the vertices of \textit{regular simplices}. Regular simplices are polytopes such that all vertices are equidistant. In particular, the vertices of a $(d-1)$-dimensional regular simplex can be embedded in $d$ dimensions as the set of $d$ standard basis vectors in $\R^{d}$. For $1 \leq i \leq d$, we denote these vectors by $\mathbf{e_i}$. For clarity with the number of terminals, we use \textit{regular $d$-simplex} to refer to regular simplices with $d$ vertices. For simplicity, when we consider the Steiner trees of the regular $d$-simplex, we mean the vertices of the regular simplex expressed as standard basis vectors (unless otherwise specified).

We now recall several useful facts about optimal Steiner trees from \cite{gilpol}. Although it is always possible to trivially add Steiner points without increasing the length of a Steiner tree (by subdividing an edge), we assume that optimal Steiner trees do not contain such Steiner points.

\begin{theorem}[{\cite[\textsection 3.2]{gilpol}}]
\label{onetwenty}
    Let $\mathbf{x}$ be a vertex in an optimal Steiner tree. Suppose there are two edges incident to $\mathbf{x}$. Then the angle included by these edges is at least $120\degree$.
\end{theorem}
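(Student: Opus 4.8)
The plan is to argue by contradiction using a local perturbation, where the key trick is to introduce a fresh Steiner point rather than move $\mathbf{x}$ itself; this handles uniformly the case where $\mathbf{x}$ is a terminal (and hence cannot be relocated) and the case where it is a Steiner point. Suppose the two incident edges join $\mathbf{x}$ to neighbors $\mathbf{a}$ and $\mathbf{b}$, and suppose toward a contradiction that the included angle $\theta = \angle \mathbf{a}\mathbf{x}\mathbf{b}$ satisfies $\theta < 120\degree$. All of the action takes place in the two-dimensional affine plane through $\mathbf{a}, \mathbf{x}, \mathbf{b}$, so I would reduce to a planar computation at the outset. Write $\hat{u}_{\mathbf{a}} = (\mathbf{a} - \mathbf{x})/\norm{\mathbf{a}-\mathbf{x}}$ and $\hat{u}_{\mathbf{b}} = (\mathbf{b}-\mathbf{x})/\norm{\mathbf{b}-\mathbf{x}}$ for the unit vectors pointing from $\mathbf{x}$ toward its two neighbors; since $\theta < 180\degree$ these are not antiparallel, so the normalized bisector $\hat{w} = (\hat{u}_{\mathbf{a}} + \hat{u}_{\mathbf{b}})/\norm{\hat{u}_{\mathbf{a}} + \hat{u}_{\mathbf{b}}}$ is well defined and makes angle $\theta/2$ with each of $\hat{u}_{\mathbf{a}}, \hat{u}_{\mathbf{b}}$.

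For small $\epsilon > 0$, set $\mathbf{y} = \mathbf{x} + \epsilon\hat{w}$ and perform the following surgery on the optimal tree $T$: delete the edges $\{\mathbf{x}, \mathbf{a}\}$ and $\{\mathbf{x}, \mathbf{b}\}$ and add the new Steiner point $\mathbf{y}$ together with the three edges $\{\mathbf{y}, \mathbf{a}\}$, $\{\mathbf{y}, \mathbf{b}\}$, and $\{\mathbf{y}, \mathbf{x}\}$. I would first check that this yields a valid Steiner tree. Deleting the two edges splits $T$ into exactly three connected components, one containing each of $\mathbf{a}$, $\mathbf{b}$, $\mathbf{x}$ (distinct because $T$ is acyclic, and this remains correct no matter how many further edges are incident to $\mathbf{x}$); attaching $\mathbf{y}$ to one vertex in each component via the three new edges reconnects everything without creating a cycle, so the result is again a spanning tree of the terminals together with the (old and new) Steiner points. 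Crucially, $\mathbf{x}$ is retained, so the argument remains valid even when $\mathbf{x}$ is a terminal.

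It then remains to show the surgery strictly shortens the tree for small $\epsilon$, which pins down the threshold $120\degree$. The only edge lengths that change are the three incident to $\mathbf{y}$, so writing the net change as
\[
\Delta(\epsilon) = \norm{\mathbf{y}-\mathbf{a}} + \norm{\mathbf{y}-\mathbf{b}} + \norm{\mathbf{y}-\mathbf{x}} - \norm{\mathbf{x}-\mathbf{a}} - \norm{\mathbf{x}-\mathbf{b}},
\]
I would Taylor expand each distance about $\mathbf{x}$. Since $\norm{\mathbf{y}-\mathbf{x}} = \epsilon$ and $\norm{\mathbf{y}-\mathbf{a}} = \norm{\mathbf{x}-\mathbf{a}} - \epsilon\cos(\theta/2) + O(\epsilon^2)$, with the analogous expansion toward $\mathbf{b}$ (using $\hat{w}\cdot\hat{u}_{\mathbf{a}} = \hat{w}\cdot\hat{u}_{\mathbf{b}} = \cos(\theta/2)$ and that both edges have positive length), one obtains $\Delta(\epsilon) = \epsilon\bigl(1 - 2\cos(\theta/2)\bigr) + O(\epsilon^2)$. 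The assumption $\theta < 120\degree$ gives $\cos(\theta/2) > 1/2$, so the linear coefficient is negative; choosing $\epsilon$ small enough that the linear term dominates the quadratic remainder yields $\Delta(\epsilon) < 0$, contradicting the optimality of $T$.

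I expect the genuinely substantive points to be the two I have flagged rather than the calculus: verifying that the tree surgery always produces a legitimate tree irrespective of the degree of $\mathbf{x}$ and of whether $\mathbf{x}$ is a terminal, and confirming that the first-order decrease is not cancelled by higher-order terms. The computation itself is routine, and the value $120\degree$ emerges exactly as the angle at which the first-order gain $2\cos(\theta/2)$ from shortening the two long edges balances the first-order cost $1$ of the new short edge $\{\mathbf{y},\mathbf{x}\}$. One could equivalently phrase the whole argument by taking $\mathbf{y}$ to be the Fermat--Torricelli point of the triangle $\mathbf{a}\mathbf{x}\mathbf{b}$, which is a strictly interior point precisely when every angle of that triangle, in particular $\theta$, is below $120\degree$.
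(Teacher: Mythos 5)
Your proof is correct, and it is essentially the standard argument: the paper does not prove Theorem \ref{onetwenty} itself but cites it from Gilbert--Pollak (\S 3.2), whose proof is exactly this local perturbation — replace the two edges at $\mathbf{x}$ by three edges through a nearby Fermat-type point and observe a first-order gain of $2\cos(\theta/2)$ against a cost of $1$, which is favorable precisely when $\theta < 120\degree$. Your added care about the tree surgery (three components, reattachment without cycles, retention of $\mathbf{x}$ so the argument covers terminals) and the $O(\epsilon^2)$ remainder is sound and complete.
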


\begin{corollary}[{\cite[\textsection 3.3]{gilpol}}]
\label{coplanar}
     In an optimal Steiner tree, there are exactly 3 edges incident to every Steiner point. Moreover, these lines are co-planar.
\end{corollary}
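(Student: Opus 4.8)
The plan is to derive both assertions from the $120\degree$ condition of Theorem~\ref{onetwenty} together with one linear-algebraic observation: \emph{in any Euclidean space, at most three unit vectors can be pairwise separated by angles of at least $120\degree$, and when exactly three such vectors exist they must sum to zero}. To establish this, let $\mathbf{u}_1,\dots,\mathbf{u}_n$ be the unit vectors pointing from a common vertex along its $n$ incident edges. Theorem~\ref{onetwenty} gives $\langle \mathbf{u}_i, \mathbf{u}_j\rangle \le \cos 120\degree = -\tfrac12$ for every $i \ne j$, so expanding the square of the sum yields $0 \le \norm{\mathbf{u}_1 + \cdots + \mathbf{u}_n}^2 = n + 2\sum_{i<j} \langle \mathbf{u}_i, \mathbf{u}_j\rangle \le n - \tfrac{n(n-1)}{2}$. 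The right-hand side is strictly negative once $n \ge 4$, a contradiction; hence every vertex of an optimal Steiner tree, and in particular every Steiner point, has degree at most $3$.

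Next I would pin down the degree of a Steiner point exactly. A Steiner point cannot be a leaf, since deleting a degree-$1$ Steiner point together with its unique edge produces a strictly cheaper Steiner tree, contradicting optimality. A Steiner point also cannot have degree $2$: if edges $\mathbf{a}\mathbf{x}$ and $\mathbf{x}\mathbf{b}$ meet at a Steiner point $\mathbf{x}$, Theorem~\ref{onetwenty} places their included angle in $[120\degree,180\degree]$, and replacing the two edges by the single edge $\mathbf{a}\mathbf{b}$ preserves connectivity while, by the triangle inequality, strictly decreasing the total length—unless the angle equals $180\degree$, which is precisely the subdividing case excluded by our standing convention. Since degree $0$ is impossible in a connected tree and degree at least $4$ was ruled out above, every Steiner point has degree exactly $3$.

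Finally, the coplanarity of the three incident lines falls out of the equality case of the observation. At a Steiner point we now have exactly three unit directions $\mathbf{u}_1,\mathbf{u}_2,\mathbf{u}_3$, and for $n=3$ the chain $0 \le \norm{\mathbf{u}_1 + \mathbf{u}_2 + \mathbf{u}_3}^2 \le 3 - 3 = 0$ is forced to hold with equality throughout. This has two consequences: every inner product must attain its bound $-\tfrac12$, so the three edges are pairwise separated by \emph{exactly} $120\degree$, and $\mathbf{u}_1 + \mathbf{u}_2 + \mathbf{u}_3 = \mathbf{0}$. The latter gives $\mathbf{u}_3 = -(\mathbf{u}_1 + \mathbf{u}_2)$, so all three directions lie in the two-dimensional span of $\mathbf{u}_1$ and $\mathbf{u}_2$ (which are linearly independent, their angle being $120\degree$); hence the three incident lines are co-planar. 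I expect the main conceptual step to be recognizing that the \emph{inequality} hypothesis of Theorem~\ref{onetwenty} is automatically tight once a vertex carries three edges: this single equality case simultaneously collapses the degree bound, forces the $120\degree$ angles, and yields the vanishing vector sum that rigidifies the configuration into a plane.
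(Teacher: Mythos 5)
Your proof is correct, and it follows exactly the route the paper indicates: the paper cites this as a classical result of Gilbert--Pollak and remarks only that co-planarity ``is easy to see'' from Theorem~\ref{onetwenty} together with the degree-$3$ property, which is precisely what your equality-case analysis of $\norm{\mathbf{u}_1+\mathbf{u}_2+\mathbf{u}_3}^2$ supplies. The degree bound via $0 \le n - \tfrac{n(n-1)}{2}$, the elimination of degree-$1$ and degree-$2$ Steiner points (correctly invoking the paper's convention excluding subdividing Steiner points for the $180\degree$ case), and the vanishing vector sum forcing the three directions into a plane are all sound.
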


The co-planarity property was not explicitly stated in \cite{gilpol}, but it is easy to see from Theorem \ref{onetwenty} and the first part of Corollary \ref{coplanar}.

\begin{theorem}[{\cite[\textsection 3.4]{gilpol}}] \label{thm: at most n-2 St pts}
In an optimal Steiner tree of $n$ terminals, there are at most $n - 2$ Steiner points. 
\end{theorem}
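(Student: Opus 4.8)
The plan is to prove the bound by a degree-counting (handshake) argument on the Steiner tree, treating it as an abstract tree and exploiting the degree constraints supplied by the earlier structural results. Let $s$ denote the number of Steiner points in the optimal tree. Since a Steiner tree on $n$ terminals and $s$ Steiner points is a tree on $n+s$ vertices, it has exactly $n+s-1$ edges, and hence the sum of all vertex degrees equals $2(n+s-1)$.

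Next I would split this degree sum according to the two vertex types. By Corollary \ref{coplanar}, every Steiner point has degree exactly $3$, so the Steiner points contribute precisely $3s$ to the degree sum. Each terminal, being a vertex of a connected tree on at least two vertices, has degree at least $1$, so the terminals contribute at least $n$. Combining these, $2(n+s-1) \ge n + 3s$, which rearranges to $s \le n-2$, as desired.

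The counting itself is essentially immediate once the degree information is in hand, so the real content lies entirely in the inputs rather than the arithmetic: the crux is Corollary \ref{coplanar}, which rules out Steiner points of degree $1$ (removable without increasing cost) and degree $2$ (excluded by the no-subdivision convention, and ultimately by the $120\degree$ condition of Theorem \ref{onetwenty}, which forces degree at most $3$ at every vertex). I would handle the trivial boundary case $n = 1$ separately, where no Steiner points are needed and the stated bound is vacuous; for $n \ge 2$ the connectivity of the tree guarantees the degree-at-least-one bound on each terminal that the computation relies on. I do not anticipate any genuine obstacle here, since the theorem is a clean consequence of the degree characterization already established.
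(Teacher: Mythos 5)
Your degree-counting argument is correct: with every Steiner point of degree exactly $3$ (Corollary \ref{coplanar}) and every terminal of degree at least $1$, the handshake identity $2(n+s-1) \ge n+3s$ immediately gives $s \le n-2$. The paper cites this result from Gilbert and Pollak without reproducing a proof, and your argument is precisely the standard one from that source, so there is nothing to add.
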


In particular, a Steiner tree with $n$ terminals and exactly $n-2$ Steiner points is a \textit{full} Steiner tree.

\begin{theorem}[{\cite[\textsection 3.5]{gilpol}}]
\label{convexhull}
    Let $P$ be the set of terminals of an optimal Steiner tree. Then all Steiner points lie within the convex hull of $P$.
\end{theorem}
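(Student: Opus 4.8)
The plan is to argue by contradiction, exploiting the fact that orthogonal projection onto a closed convex set is nonexpansive and can only \emph{strictly} shorten an edge that crosses from outside the set to a point inside it. Write $H = \mathrm{conv}(P)$ for the closed convex hull of the terminals and let $\pi : \R^d \to H$ denote the orthogonal projection onto $H$. Two properties of $\pi$ do the work. First, $\pi$ fixes every point of $H$, so in particular it fixes every terminal. Second, for any $x \notin H$ and any $y \in H$ one has the strict inequality $\norm{\pi(x) - y} < \norm{x - y}$.

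I would establish this strict inequality from the variational characterization of the projection, namely $\langle x - \pi(x),\, z - \pi(x)\rangle \le 0$ for all $z \in H$. Expanding $\norm{x - y}^2 = \norm{x - \pi(x)}^2 + \norm{\pi(x) - y}^2 + 2\langle x - \pi(x),\, \pi(x) - y\rangle$ and taking $z = y$ shows the cross term is nonnegative, while $\norm{x - \pi(x)}^2 > 0$ whenever $x \notin H$. Together these force $\norm{\pi(x) - y} < \norm{x - y}$.

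Now suppose toward a contradiction that some Steiner point of the optimal tree $T$ lies strictly outside $H$, and let $S_{\mathrm{out}}$ be the set of all such points. Consider the forest that $T$ induces on $S_{\mathrm{out}}$. Since $T$ is connected and contains at least one terminal (which lies in $H$), every connected component $C$ of this forest has an edge leaving it, and any such edge must terminate at a vertex lying in $H$ (a terminal or a Steiner point in $H$); otherwise its other endpoint would be an outside Steiner point belonging to the same component $C$. Applying $\pi$ to every vertex of $T$ while keeping the combinatorial tree structure fixed, every edge length is nonincreasing by nonexpansiveness, while the boundary-crossing edge out of $C$ strictly shortens by the strict inequality above. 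The terminals are unmoved, so the result is again a Steiner tree for $P$ but of strictly smaller total length, contradicting the optimality of $T$.

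The main obstacle is securing a \emph{strict} decrease: bare nonexpansiveness of $\pi$ only yields a tree of no greater cost, which does not contradict optimality. The resolution is the combinatorial observation that any component of $S_{\mathrm{out}}$ must be joined to the hull by at least one edge, combined with the strict form of the projection inequality applied to that edge. A secondary point to check is that projection may cause distinct vertices to coincide or Steiner-point degrees to drop; this is harmless, since the projected graph is still combinatorially a spanning tree of the new point set (any coincidences only introduce zero-length edges that can be contracted), and its total length is strictly below that of $T$ regardless. One could instead attempt a supporting-hyperplane argument invoking the $120\degree$ angle condition of Theorem~\ref{onetwenty}, but handling neighbors lying on both sides of the hyperplane is delicate, so the projection argument is the cleaner route.
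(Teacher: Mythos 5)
Your argument is correct, and it is worth noting that the paper itself offers no proof of this statement: it is recalled verbatim from Gilbert and Pollak \cite[\textsection 3.5]{gilpol} as a classical fact, so there is no in-paper proof to compare against. Your route — nearest-point projection $\pi$ onto $H=\mathrm{conv}(P)$, nonexpansiveness for all edges, and the strict inequality $\norm{\pi(x)-y}<\norm{x-y}$ for $x\notin H$, $y\in H$ derived from the variational characterization $\langle x-\pi(x),\,z-\pi(x)\rangle\le 0$ — is sound, and you correctly identify and close the two genuine gaps: (i) strictness, which you obtain by observing that every component of the outside forest is joined to $H$ by at least one boundary-crossing edge, and (ii) possible coincidences after projection, which only help since the projected graph still connects all terminals at strictly smaller total length, so the minimum spanning tree of the projected point set is strictly cheaper. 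The classical Gilbert--Pollak argument instead takes a supporting hyperplane separating an offending Steiner point from $P$ and projects the outside portion onto that hyperplane; your global projection onto the hull is a cleaner packaging of the same idea that avoids the case analysis of neighbors straddling the hyperplane, exactly as you remark at the end.
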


\begin{theorem}[{\cite[\textsection 4, Uniqueness Theorem]{gilpol}}]
\label{unique}
    For a given topology of a Steiner tree on the set of terminals in any Euclidean space, there always exists a unique Steiner tree with this topology of minimal length.
\end{theorem}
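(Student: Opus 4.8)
The plan is to recast the problem as the minimization of a convex function over the coordinates of the Steiner points, proving existence by compactness and uniqueness by convexity sharpened with a rigidity argument. Fix the topology, i.e.\ a tree $\tau$ whose vertex set is the union of the (fixed) terminals and a set $\{s_1,\dots,s_k\}$ of Steiner points. Writing $\mathbf{x}_v$ for the position of vertex $v$ (prescribed when $v$ is a terminal, and the free variable $\mathbf{s}_i$ when $v=s_i$), the length of the realization is
\[
L(\mathbf{s}_1,\dots,\mathbf{s}_k)=\sum_{\{u,v\}\in E(\tau)}\lVert \mathbf{x}_u-\mathbf{x}_v\rVert .
\]
Each summand is a Euclidean norm composed with an affine map, hence convex, so $L$ is convex and continuous on $(\R^d)^k$.

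For existence, I would restrict the Steiner points to the convex hull $K$ of the terminals. The nearest-point projection onto the convex set $K$ is nonexpansive, so projecting every Steiner point onto $K$ (the terminals already lie in $K$) does not increase the length of any edge and hence does not increase $L$. It therefore suffices to minimize $L$ over the compact set $K^k$, where a minimizer exists by continuity. (Alternatively, one argues coercivity directly: if some $\mathbf{s}_i\to\infty$, the tree-path joining $s_i$ to any terminal forces an edge length, and thus $L$, to infinity.)

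For uniqueness, suppose $\mathbf{S}$ and $\mathbf{S}'$ are both minimizers and consider $\phi(t)=L\big((1-t)\mathbf{S}+t\mathbf{S}'\big)$ on $[0,1]$. Convexity gives $\phi(t)\le(1-t)\phi(0)+t\phi(1)=L_{\min}$, while $\phi(t)\ge L_{\min}$ since $L_{\min}$ is the minimum; hence $\phi\equiv L_{\min}$ and the whole segment consists of minimizers. Each edge length $\ell_e(t)=\lVert \mathbf{d}_e+t\,\delta_e\rVert$ (with $\mathbf{d}_e$ the edge vector in $\mathbf{S}$ and $\delta_e$ the relative displacement of its endpoints) is convex and lies below its chord; since the chords sum to the constant $L_{\min}$ and the $\ell_e$ themselves sum to $L_{\min}$, every $\ell_e$ must equal its chord, i.e.\ be affine. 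For the Euclidean norm this forces $\delta_e$ to be parallel to $\mathbf{d}_e$: along the segment each edge merely rescales without rotating, so the displacement field $\mathbf{w}_v=\mathbf{x}_v(\mathbf{S}')-\mathbf{x}_v(\mathbf{S})$ satisfies $\mathbf{w}_u-\mathbf{w}_v\parallel\mathbf{d}_{\{u,v\}}$ for every edge and $\mathbf{w}_v=0$ at every terminal.

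It remains to deduce $\mathbf{w}\equiv0$, and this is where the main obstacle lies: the norm is not strictly convex, so convexity alone only yields a convex \emph{set} of minimizers, and one must exploit the branching geometry to collapse it. I would peel the tree inward. In a full topology every terminal is a leaf, and there is a Steiner point $s$ adjacent to two terminals $t_1,t_2$; the variational (first-order) argument underlying Theorem \ref{onetwenty} applies to a fixed-topology minimizer as well, so the edges $st_1$ and $st_2$ meet at an angle of at least $120\degree$ and are in particular linearly independent. Since $\mathbf{w}_{t_1}=\mathbf{w}_{t_2}=0$, the relations $\mathbf{w}_s\parallel\mathbf{d}_{st_1}$ and $\mathbf{w}_s\parallel\mathbf{d}_{st_2}$ force $\mathbf{w}_s=0$. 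Treating $s$ as newly pinned and deleting $t_1,t_2$ yields a smaller full topology, and induction gives $\mathbf{w}\equiv0$, whence $\mathbf{S}=\mathbf{S}'$. The one genuinely unavoidable caveat is degenerate topologies: a Steiner point of degree $\le 2$ (a subdivision vertex) has a whole interval of optimal positions, so the statement is understood under the standing convention that Steiner points have degree $3$, or equivalently up to suppression of such degenerate vertices; with that convention the peeling argument goes through and the minimizer is unique.
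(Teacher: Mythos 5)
The paper does not actually prove Theorem~\ref{unique}; it is imported verbatim from \S4 of \cite{gilpol}, so your argument can only be measured against the classical convexity proof. Your overall route is exactly that one: total length as a convex function of the Steiner-point coordinates, existence by restricting to the (compact) convex hull via the nonexpansive projection, and uniqueness by showing the whole segment between two putative minimizers consists of minimizers, forcing each edge-length function to be affine along it. The non-degenerate case is handled correctly, and the peeling induction at a Steiner point with two pinned neighbours is a clean way to finish the rigidity step (and it does generalize beyond full topologies: once Steiner points have degree $\geq 3$, a leaf of the forest induced on Steiner points has at least two pinned neighbours).

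There is, however, a genuine gap, and it is not the one you flag. You dispose of degree-$\leq 2$ Steiner points, but the argument also breaks when the \emph{minimizer itself} is geometrically degenerate, i.e.\ when a degree-$3$ Steiner point coincides with a neighbour --- which genuinely occurs (the Fermat point of a triangle with an angle $\geq 120\degree$ sits on a terminal, and relatively minimal trees for a ``wrong'' topology routinely collapse edges to length zero). Two of your steps fail there. First, ``$\ell_e$ affine on $[0,1]$ $\Rightarrow \delta_e \parallel \mathbf{d}_e$'' requires $\mathbf{d}_e \neq 0$: if $\mathbf{d}_e = 0$ then $\ell_e(t) = t\lVert \delta_e\rVert$ is affine for \emph{every} $\delta_e$, so the equality case imposes no constraint on that edge. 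Second, the $120\degree$/linear-independence claim at the cherry presupposes both terminal edges have positive length; if $\mathbf{s}=\mathbf{t_1}$ there is no angle to speak of, and if instead the third edge at $\mathbf{s}$ collapses, the first-order (subdifferential) condition only forces the angle between the two terminal edges to be \emph{at least} $120\degree$, which permits $180\degree$ and hence linearly dependent directions, so $\mathbf{w_s}$ is not pinned to zero. The theorem remains true in these cases, but your proof does not establish it; a case analysis on collapsed edges (or quotienting coincident vertices and re-running the argument on the reduced topology) is needed to close the argument.
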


By \textit{topology} we mean the choice of edges in the Steiner tree for a fixed number of (unlabeled) Steiner points. For a fixed topology, the \textit{relatively minimal tree} refers to the unique tree from Theorem \ref{unique}.

The easiest case to consider is the equilateral triangle, i.e. $2$-dimensional regular simplex. For general triangles, it was solved as early as $17$\textsuperscript{th} century by Torricelli, Cavalieri and others and it was thoroughly analysed from different points of view in the last century---see \cite{CourantRobbins41} or \cite{Spain96}. The conclusion is that the optimal Steiner tree contains a Steiner point (also called \emph{Fermat point}) if and only if all of the interior angles are strictly less than $120\degree$. In \cite{gilchung}, they provide a formula for computing all distances to the Steiner point when a suitable triangle is given.

\section{New structural properties of Steiner trees} \label{sec: new props}
In this section we introduce several structural results about Euclidean Steiner trees that apply even in the high-dimensional setting. Garey, Graham, and Johnson's proof of \np-hardness of Euclidean Steiner Tree proves \np-hardness in the plane using that edges do not cross in an optimal tree \cite{Garey_Graham_Johnson_1977}. This property is extremely powerful in analyzing planar Steiner trees, but, while still true in higher dimensions, it is no longer useful. We do not know how to prove even in \np-hardness of Euclidean Steiner Tree without proving it in the plane, partly for lack of effective higher dimensional tools. The purpose of this section is to begin to ameliorate this deficiency.

\subsection{Edge lengths in optimal Steiner trees}
In \cite[\textsection 8.4]{gilpol}, the authors presents a bound on the length of edges between Steiner points in any optimal Steiner tree relative to the nearby edges in the tree. The argument only applies to Steiner trees on the plane (e.g., see  \cite[\textsection 9]{lengauer2012combinatorial}). In this section we use the local planarity of optimal Steiner trees to prove a similar result which holds in any Euclidean space.

\begin{theorem}
\label{SQ6/2}
    Let $\mathbf{s_1}$ and $\mathbf{s_2}$ be two Steiner points connected by an edge in an optimal Steiner tree $T$ in the Euclidean space of dimension $d \geq 3$. Let $N(\mathbf{x})$ denote the neighbourhood of a vertex $\mathbf{x} \in T$ and  $L_{0} = \min\limits_{\mathbf{s} \, \in \{\mathbf{s_1}, \mathbf{s_2}\}} \min\limits_{\mathbf{v} \, \in N(\mathbf{s}) \setminus \{\mathbf{s_1}, \mathbf{s_2}\}} ||\mathbf{s}-\mathbf{v}||$. Then $||\mathbf{s_1} - \mathbf{s_2}|| \geq \left(\frac{\sqrt{6}}{2}-1\right)L_{0}$.
\end{theorem}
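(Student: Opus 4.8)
The plan is to localize the problem to the five-edge subtree around $\mathbf{s_1}$ and $\mathbf{s_2}$ and then invoke global optimality by comparing against the competing topologies on the four outer neighbours. By Corollary~\ref{coplanar}, each Steiner point has exactly three incident edges; since the three edges at $\mathbf{s_1}$ are coplanar and each pairwise angle is at least $120\degree$ by Theorem~\ref{onetwenty}, and the three angles sum to $360\degree$, they are all exactly $120\degree$, and likewise at $\mathbf{s_2}$. Write $N(\mathbf{s_1}) = \{\mathbf{a_1}, \mathbf{b_1}, \mathbf{s_2}\}$ and $N(\mathbf{s_2}) = \{\mathbf{a_2}, \mathbf{b_2}, \mathbf{s_1}\}$. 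I would place $\mathbf{s_1}$ at the origin and $\mathbf{s_2} = \ell\,\hat{\mathbf{x}}$ with $\ell = \|\mathbf{s_1}-\mathbf{s_2}\|$. Both local planes contain the $x$-axis, so they meet along it at some dihedral angle $\phi$; the $120\degree$ conditions then fix the four outgoing directions explicitly in terms of $\phi$, and the outer vertices $\mathbf{a_1},\mathbf{b_1},\mathbf{a_2},\mathbf{b_2}$ sit at distances $r_{a_1}, r_{b_1}, r_{a_2}, r_{b_2} \geq L_0$ along them. The cost of this subtree is $C = r_{a_1}+r_{b_1}+r_{a_2}+r_{b_2}+\ell$. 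Crucially, $\phi$ ranges over all of $[0,\pi]$ precisely because $d \geq 3$; in the plane only $\phi \in \{0,\pi\}$ is available, which is exactly why the hypothesis $d\geq 3$ appears.

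The key is that, by optimality of $T$, this subtree must be a minimum Steiner tree of $\{\mathbf{a_1},\mathbf{b_1},\mathbf{a_2},\mathbf{b_2}\}$, so $C$ is at most the cost of either competing full topology that re-pairs these four points, namely $\{\mathbf{a_1},\mathbf{a_2}\},\{\mathbf{b_1},\mathbf{b_2}\}$ and $\{\mathbf{a_1},\mathbf{b_2}\},\{\mathbf{b_1},\mathbf{a_2}\}$. The intuition is that contracting the edge $\mathbf{s_1}\mathbf{s_2}$ yields a degree-four Steiner point that can be re-split along a direction $\mathbf{w}$; to first order this saves an amount proportional to $\sum_i |\mathbf{w}\cdot\mathbf{u_i}| - 2$, where the $\mathbf{u_i}$ are the four unit outgoing directions, so optimality forces $\hat{\mathbf{x}}$ to be a maximizer, constraining the admissible $\phi$. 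I note that merely comparing against the single degree-four Steiner point is too weak: at the regular simplex that point is strictly suboptimal, whereas the re-paired topologies tie with the given one, which is exactly the equality case being targeted.

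To extract the quantitative bound I would exhibit an explicit competitor for one re-paired topology --- two Steiner points placed symmetrically relative to the new pairs --- and compute its length $C'(\ell,\phi,\{r\})$ in closed form. Optimality gives $C \leq C'$, and the plan is to show that this inequality rearranges to $\ell \geq \left(\frac{\sqrt{6}}{2}-1\right)L_0$. The equality case should be the regular $4$-simplex (the regular tetrahedron): there $\phi = 90\degree$, all four outer lengths are equal, and a direct computation gives $\ell = \left(\frac{\sqrt{6}}{2}-1\right)r$ with all three topologies of equal cost, confirming both the constant and the tightness of the bound.

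The main obstacle is the genuinely three-dimensional nature of the competitor. Unlike the planar Gilbert--Pollak argument of \cite[\textsection 8.4]{gilpol}, the two local planes differ, so the re-paired Steiner tree does not lie in a single plane and its length is an honest function of the dihedral angle $\phi$ (this is precisely the gap that must be closed to pass from the plane to general $d$). The crux is therefore to prove that the minimum of $\ell/L_0$, over all configurations in which the given topology stays optimal, is attained at the symmetric regular-simplex configuration. I expect to handle this by reducing to the symmetric case --- arguing monotonicity of the binding constraint in each outer length $r_{a_i}, r_{b_i}$, so that the extremum has all of them equal to $L_0$, and then symmetrizing and optimizing over $\phi$ --- after which the claim collapses to the one-variable computation above. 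Establishing this reduction rigorously, rather than the final single-variable optimization, is where the real work lies.
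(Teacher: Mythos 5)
Your outline is, in substance, the paper's argument: reduce via Corollary \ref{coplanar} to the three-dimensional span of the two local planes, note that the angles at each Steiner point are exactly $120\degree$, replace the local five-edge piece by a competitor that re-pairs the four outgoing directions through two new Fermat points, invoke optimality of $T$, and minimize over the dihedral angle, with the extremal case at $90\degree$ yielding $\frac{\sqrt{6}}{2}-1$. The constant, the equality configuration, and the observation that one must use the better of the two re-pairings (equivalently, the acute dihedral angle) all match the paper.

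The genuine gap is the step you yourself flag as ``where the real work lies,'' and it is dissolved by a device you are missing rather than by the monotonicity argument you propose. Do not take the true neighbours $\mathbf{a_i},\mathbf{b_i}$ at their actual distances $r\geq L_0$ as the terminals of the local comparison. Instead, mark on each of the four outgoing edges the point at distance exactly $L_0$ from its Steiner point (these points lie on the edges by the definition of $L_0$); call them $\mathbf{x_1},\mathbf{y_1},\mathbf{x_2},\mathbf{y_2}$ and perform the surgery only on the truncated configuration, leaving every portion of $T$ beyond these four points untouched. The replaced piece of $T$ then costs exactly $4L_0+\norm{\mathbf{s_1}-\mathbf{s_2}}$, the configuration is symmetric by construction, and no reduction over the outer lengths $r_{a_i},r_{b_i}$ is needed. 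Similarly, you need not compute the optimal tree of the re-paired topology in closed form: route the competitor through the midpoint $\mathbf{o}$ of $\mathbf{s_1}\mathbf{s_2}$, taking the Fermat points of the planar triangles $\triangle\mathbf{x_1}\mathbf{x_2}\mathbf{o}$ and $\triangle\mathbf{y_1}\mathbf{y_2}\mathbf{o}$ joined by the segment through $\mathbf{o}$. This competitor is generally suboptimal for its own topology, but a one-sided comparison is all that optimality of $T$ requires, and its length is an elementary closed form that yields $\norm{\mathbf{s_1}-\mathbf{s_2}}\geq\left(\sqrt{3}\cos(\alpha/2)-1\right)L_0$; taking $\alpha$ to be the acute angle between the two planes, this is worst at $\alpha=\pi/2$. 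With these two adjustments your plan becomes the paper's proof; without them, the reduction to the all-lengths-equal case is a real obstruction that your proposal does not overcome.
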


\begin{proof}
    The proof is similar to the proof in  \cite[\S8.4]{gilpol}. Without loss of generality, assume there is a point $\mathbf{x_1}$ in $T$ adjacent to $\mathbf{s_1}$ and at distance $L_0$ and let $\mathbf{y_1}$ be a point at distance $L_0$ from $\mathbf{s_1}$ in the line incident to $\mathbf{s_1}$ not containing $\mathbf{s_2}$ or $\mathbf{x_1}$. Similarly, let $\mathbf{x_2}, \mathbf{y_2}$ be the two points at distance $L_0$ from $\mathbf{s_2}$ on the lines of $T$ incident to $\mathbf{s_2}$ but not containing $\mathbf{s_1}$. See Figure \ref{fig: plane intersection}.
    

\begin{figure}[h]
    \centering

\tikzset{every picture/.style={line width=0.75pt}} 

\begin{tikzpicture}[x=0.75pt,y=0.75pt,yscale=-1,xscale=1]

\draw    (193.1,30.02) -- (459.91,62.32) ;
\draw  [fill={rgb, 255:red, 74; green, 74; blue, 74 }  ,fill opacity=1 ] (317.53,46.17) .. controls (317.53,43.69) and (319.54,41.68) .. (322.02,41.68) .. controls (324.5,41.68) and (326.5,43.69) .. (326.5,46.17) .. controls (326.5,48.65) and (324.5,50.66) .. (322.02,50.66) .. controls (319.54,50.66) and (317.53,48.65) .. (317.53,46.17) -- cycle ;
\draw  [draw opacity=0][fill={rgb, 255:red, 184; green, 233; blue, 134 }  ,fill opacity=1 ] (233.8,62.32) -- (497,62.32) -- (384.2,127.54) -- (121,127.54) -- cycle ;
\draw  [draw opacity=0][fill={rgb, 255:red, 80; green, 227; blue, 194 }  ,fill opacity=1 ][line width=0.75]  (321.09,162.98) -- (153,150.56) -- (171.31,95.57) -- (343.53,95.57) -- (321.09,162.98) -- cycle ;
\draw  [draw opacity=0][fill={rgb, 255:red, 184; green, 233; blue, 134 }  ,fill opacity=0.8 ] (160.65,128.18) -- (160.65,126.44) -- (170.21,95.57) -- (343.53,95.57) -- (333.43,128.18) -- (160.65,128.18) -- cycle ;
\draw  [color={rgb, 255:red, 0; green, 0; blue, 0 }  ,draw opacity=1 ][fill={rgb, 255:red, 189; green, 16; blue, 224 }  ,fill opacity=1 ] (342.25,95.57) .. controls (343.53,89.82) and (351.27,63.02) .. (352.51,56.6) .. controls (353.76,50.17) and (361.44,73.19) .. (360.16,73.83) .. controls (358.88,74.47) and (340.97,101.33) .. (342.25,95.57) -- cycle ;
\draw  [draw opacity=0][fill={rgb, 255:red, 80; green, 227; blue, 194 }  ,fill opacity=0.78 ][line width=0.75]  (193.12,26.51) -- (361.22,39.59) -- (343.53,95.57) -- (171.31,95.57) -- (193.12,26.51) -- cycle ;
\draw    (342.25,95.57) -- (358.88,140.33) ;
\draw  [fill={rgb, 255:red, 0; green, 0; blue, 0 }  ,fill opacity=1 ] (455.43,62.32) .. controls (455.43,59.84) and (457.43,57.83) .. (459.91,57.83) .. controls (462.39,57.83) and (464.4,59.84) .. (464.4,62.32) .. controls (464.4,64.8) and (462.39,66.81) .. (459.91,66.81) .. controls (457.43,66.81) and (455.43,64.8) .. (455.43,62.32) -- cycle ;
\draw  [fill={rgb, 255:red, 0; green, 0; blue, 0 }  ,fill opacity=1 ] (349.27,127.54) .. controls (349.27,125.06) and (351.28,123.05) .. (353.75,123.05) .. controls (356.23,123.05) and (358.24,125.06) .. (358.24,127.54) .. controls (358.24,130.02) and (356.23,132.04) .. (353.75,132.04) .. controls (351.28,132.04) and (349.27,130.02) .. (349.27,127.54) -- cycle ;
\draw    (459.91,62.32) -- (342.25,95.57) ;
\draw    (342.25,95.57) -- (211.8,95.57) ;
\draw  [fill={rgb, 255:red, 0; green, 0; blue, 0 }  ,fill opacity=1 ] (339.05,95.57) .. controls (339.05,93.09) and (341.05,91.08) .. (343.53,91.08) .. controls (346.01,91.08) and (348.02,93.09) .. (348.02,95.57) .. controls (348.02,98.05) and (346.01,100.06) .. (343.53,100.06) .. controls (341.05,100.06) and (339.05,98.05) .. (339.05,95.57) -- cycle ;
\draw  [fill={rgb, 255:red, 0; green, 0; blue, 0 }  ,fill opacity=1 ] (188.61,30.02) .. controls (188.61,27.54) and (190.62,25.53) .. (193.1,25.53) .. controls (195.57,25.53) and (197.58,27.54) .. (197.58,30.02) .. controls (197.58,32.5) and (195.57,34.51) .. (193.1,34.51) .. controls (190.62,34.51) and (188.61,32.5) .. (188.61,30.02) -- cycle ;
\draw  [fill={rgb, 255:red, 0; green, 0; blue, 0 }  ,fill opacity=1 ] (148.49,146.07) .. controls (148.49,143.59) and (150.5,141.58) .. (152.97,141.58) .. controls (155.45,141.58) and (157.46,143.59) .. (157.46,146.07) .. controls (157.46,148.55) and (155.45,150.56) .. (152.97,150.56) .. controls (150.5,150.56) and (148.49,148.55) .. (148.49,146.07) -- cycle ;
\draw    (173.44,128.17) -- (144.02,154.4) ;
\draw  [dash pattern={on 0.84pt off 2.51pt}]  (211.8,95.57) -- (173.44,128.17) ;
\draw    (188.78,17.56) -- (211.8,95.57) ;
\draw  [fill={rgb, 255:red, 0; green, 0; blue, 0 }  ,fill opacity=1 ] (207.32,95.57) .. controls (207.32,93.09) and (209.33,91.08) .. (211.8,91.08) .. controls (214.28,91.08) and (216.29,93.09) .. (216.29,95.57) .. controls (216.29,98.05) and (214.28,100.06) .. (211.8,100.06) .. controls (209.33,100.06) and (207.32,98.05) .. (207.32,95.57) -- cycle ;
\draw  [fill={rgb, 255:red, 74; green, 74; blue, 74 }  ,fill opacity=1 ] (272.54,95.57) .. controls (272.54,93.09) and (274.55,91.08) .. (277.03,91.08) .. controls (279.5,91.08) and (281.51,93.09) .. (281.51,95.57) .. controls (281.51,98.05) and (279.5,100.06) .. (277.03,100.06) .. controls (274.55,100.06) and (272.54,98.05) .. (272.54,95.57) -- cycle ;

\draw (365.45,55) node [anchor=north west][inner sep=0.75pt]   [align=left] {$\displaystyle \alpha $};
\draw (382.55,92.97) node [anchor=north west][inner sep=0.75pt]   [align=left] {$\displaystyle \Pi _{1}$};
\draw (291.74,140.29) node [anchor=north west][inner sep=0.75pt]   [align=left] {$\displaystyle \Pi _{2}$};
\draw (324.16,97.75) node [anchor=north west][inner sep=0.75pt]   [align=left] {$\displaystyle \mathbf{s_{1}}$};
\draw (216.73,97.75) node [anchor=north west][inner sep=0.75pt]   [align=left] {$\displaystyle \mathbf{s_{2}}$};
\draw (463.83,35) node [anchor=north west][inner sep=0.75pt]   [align=left] {$\displaystyle \mathbf{x_{1}}$};
\draw (164.56,15.59) node [anchor=north west][inner sep=0.75pt]   [align=left] {$\displaystyle \mathbf{x_{2}}$};
\draw (365.36,121.1) node [anchor=north west][inner sep=0.75pt]   [align=left] {$\displaystyle \mathbf{y_{1}}$};
\draw (158.18,152.42) node [anchor=north west][inner sep=0.75pt]   [align=left] {$\displaystyle \mathbf{y_{2}}$};
\draw (272.15,101.92) node [anchor=north west][inner sep=0.75pt]   [align=left] {$\displaystyle \mathbf{o}$};
\draw (323.31,16.23) node [anchor=north west][inner sep=0.75pt]   [align=left] {$\displaystyle \mathbf{c}$};

\end{tikzpicture}

    \caption{$3$-dimensional space defined by the intersecting planes with key points labeled.}
    \label{fig: plane intersection}
\end{figure}
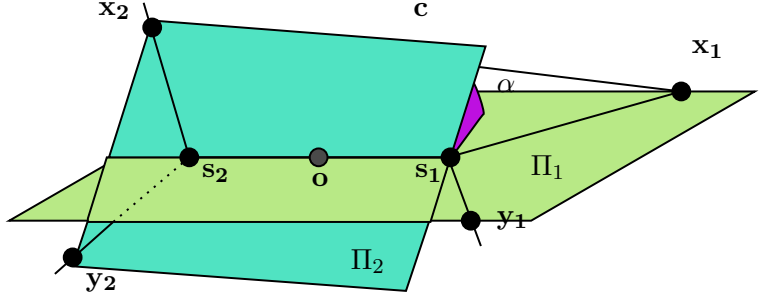
    
    From Corollary \ref{coplanar}, we have that for the Euclidean space of dimension $d \geq 2$, the lines incident to a given Steiner point are co-planar. Thus, two adjacent Steiner points define two planes that meet at a line, so we only need to consider the 3-dimensional space defined by these two intersecting planes. Let $\Pi_1$ and $\Pi_2$ denote such planes defined by the points $\mathbf{x_1}, \mathbf{y_1}, \mathbf{s_2}$ and $\mathbf{x_2}, \mathbf{y_2}, \mathbf{s_1}$, respectively. So either $\Pi_1$ and $\Pi_2$ meets only in the line defined by $\mathbf{s_1}$ and $\mathbf{s_2}$, or $\Pi_1 = \Pi_2$. We only need to consider the former case by  \cite[\S8.4]{gilpol}. For this, let $\alpha$ denote the angle at which $\Pi_1$ and $\Pi_2$ intersect ($\alpha$ is the acute angle between the lines $\overleftrightarrow{\mathbf{x_1y_1}}$ and $\overleftrightarrow{\mathbf{x_2y_2}}$ when both are translated to include the origin).

    Let us introduce a coordinate system with unit length $L_0$ and origin $\mathbf{o}$ placed at the midpoint at of the segment $\mathbf{s_1 s_2}$. If $x = ||\mathbf{x_1} - \mathbf{x_2}||$ and $s = ||\mathbf{s_1} - \mathbf{s_2}||$, then we have the following:
    
    \begin{align*}
        &\mathbf{o} = (0, 0, 0)\\
        &\mathbf{s_1} = \left(\frac{s}{2}, 0, 0\right)\\
        &\mathbf{s_2} = \left(-\frac{s}{2}, 0, 0\right)\\
        &\mathbf{x_1} = \left(\frac{1+s}{2}, \frac{\sqrt{3}}{2}\cos\left(-\frac{\alpha}{2}\right),  \frac{\sqrt{3}}{2}\sin\left(-\frac{\alpha}{2}\right)\right)\\
        &\mathbf{x_2} = \left(-\frac{1+s}{2}, \frac{\sqrt{3}}{2}\cos\left(\frac{\alpha}{2}\right), \frac{\sqrt{3}}{2}\sin\left(\frac{\alpha}{2}\right)\right)\\
        &\mathbf{y_1} = \left(\frac{1+s}{2}, \frac{-\sqrt{3}}{2}\cos\left(-\frac{\alpha}{2}\right),  \frac{-\sqrt{3}}{2}\sin\left(-\frac{\alpha}{2}\right)\right)\\
        &\mathbf{y_2} = \left(-\frac{1+s}{2}, \frac{-\sqrt{3}}{2}\cos\left(\frac{\alpha}{2}\right), \frac{-\sqrt{3}}{2}\sin\left(\frac{\alpha}{2}\right)\right)\\
        &\mathbf{c} = \left(0, \frac{\sqrt{3}}{2}\cos\left(\frac{\alpha}{2}\right), 0\right)
    \end{align*} 
    
    where $\mathbf{c}$ is the midpoint of the line segment $\mathbf{x_1}\mathbf{x_2}$.

    Now, we consider a second tree $T'$ derived
 from $T$ by removing $\mathbf{s_1}, \mathbf{s_2}$ and instead adding $\mathbf{x_1}, \mathbf{x_2}, \mathbf{y_1},$ and $\mathbf{y_2}$ as Steiner points and joining $\mathbf{x_1},\mathbf{x_2}$ (respectively, $\mathbf{y_1}, \mathbf{y_2}$) to the Fermat point of triangle $\triangle \mathbf{x_1}\mathbf{x_2}\mathbf{o}$ (respectively, $\triangle \mathbf{y_1}\mathbf{y_2}\mathbf{o}$), respectively, and connecting those Fermat points (via a line passing through $\mathbf{o}$). Call these Fermat points $\mathbf{r_1}$ and $\mathbf{r_2}$, respectively. We are interested in finding the coordinates of $\mathbf{r_1}$ and using the optimality of $T$ to obtain a bound on $\norm{\mathbf{s_1 - s_2}}$ (it suffices to consider $\mathbf{r_1}$ by symmetry in this coordinate system).

Consider $\triangle \mathbf{c}\mathbf{x_1}\mathbf{r_1}$. By symmetry of $\mathbf{x_1}$ and $\mathbf{x_2}$ about the line $\mathbf{o}\mathbf{c}$ and the definition of a Fermat point, $\triangle \mathbf{c}\mathbf{x_1}\mathbf{r_1}$ is a 30-60-90 triangle. Then, letting $x = \sqrt{(1+s)^{2}+3\sin^{2}(\frac{\alpha}{2})}$,  we have that $||\mathbf{c} - \mathbf{r_1}|| = \frac{x}{2\sqrt{3}}$, $\norm{\mathbf{c} - \mathbf{x_1}} = x/2$, and $\norm{\mathbf{x_1} - \mathbf{r_1}} = x/\sqrt{3}$. Then, the difference in the length of trees $T'$ and $T$ is
    
    \begin{align*}
        \underbrace{4||\mathbf{x_1} -  \mathbf{r_1}||+2(||\mathbf{o} - \mathbf{c}||-||\mathbf{c} - \mathbf{r_1}||)}_{\text{from } T} - \underbrace{(s+4)}_{\text{from } T'} =\sqrt{3}\left(x+\cos\left(\frac{\alpha}{2}\right)\right) - (s+4) \leq 0,
    \end{align*}

with the $\leq 0$ inequality coming from optimality of $T$. Solving for $s$, we get the inequality
    
    \begin{equation*}
        s \geq \sqrt{3}\cos\left(\frac{\alpha}{2}\right)-1.
    \end{equation*} 
    
    This is minimized for $\alpha = \frac{\pi}{2}$, where we get the inequality
    
    \begin{equation*}
    ||\mathbf{s_1} - \mathbf{s_2}|| \geq \left(\frac{\sqrt{6}}{2}-1\right)L_{0}.\qedhere
    \end{equation*}
\end{proof}

\subsection{Bounds on the coordinates of Steiner points}
We prove that optimal Steiner trees in Euclidean spaces are not only contained in the convex hull of the input terminals, but each of their coordinates are also strictly contained in the interval formed by the maximum and minimum value of the terminals in that coordinate.  This is formalized in the next lemma.

\begin{lemma} \label{lem: strict coordinate bounds}
Let $P \subseteq \R^d$ be a finite point-set such that for all $1\leq i \leq d$, there are $\mathbf{p}, \mathbf{q} \in P$ with $p_i \neq q_i$. Let $\mathbf{s} = (s_1, \ldots, s_d)$ be a Steiner point in an optimal Steiner tree of $P$. Then, for all $1 \leq i \leq d$ it holds that
\[
\min_{\mathbf{p} \in P} p_i < s_i < \max_{\mathbf{p} \in P}  p_i.
\]
\end{lemma}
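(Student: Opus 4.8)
The plan is to upgrade the weak containment already guaranteed by Theorem~\ref{convexhull} into a strict one. Since the convex hull of $P$ lies inside the axis-parallel bounding box of $P$, that theorem immediately gives $\min_{\mathbf p\in P}p_i \le s_i \le \max_{\mathbf p\in P}p_i$ for every Steiner point $\mathbf s$ and every coordinate $i$; the content of the lemma is that neither inequality can be tight. I fix a coordinate $i$, write $M=\max_{\mathbf p\in P}p_i$, and assume toward a contradiction that some Steiner point satisfies $s_i=M$. The lower bound follows by applying the same argument to the point set with its $i$-th coordinate negated, so it suffices to rule this out.

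First I would record a force-balance identity at vertices of degree three. At a Steiner point, Corollary~\ref{coplanar} gives exactly three coplanar incident edges, and Theorem~\ref{onetwenty} forces their pairwise angles to be at least $120\degree$; in fact, at \emph{any} degree-three vertex the three unit vectors $\mathbf u_1,\mathbf u_2,\mathbf u_3$ toward its neighbours satisfy $0\le \lVert \mathbf u_1+\mathbf u_2+\mathbf u_3\rVert^2 = 3 + 2\sum_{j<k}\mathbf u_j\cdot\mathbf u_k \le 3-3 = 0$, so $\mathbf u_1+\mathbf u_2+\mathbf u_3=\mathbf 0$ and every included angle is exactly $120\degree$. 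Applying this at $\mathbf s$ and projecting onto coordinate $i$: by Theorem~\ref{convexhull} each neighbour $\mathbf n_j$ obeys $(\mathbf n_j)_i\le M=s_i$, hence $(\mathbf u_j)_i=((\mathbf n_j)_i-s_i)/\lVert \mathbf n_j-\mathbf s\rVert\le 0$; three nonpositive reals summing to zero each vanish, so all three neighbours of $\mathbf s$ again lie on the hyperplane $H=\{x_i=M\}$.

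Next I would propagate this. Let $A=\{\mathbf v\in T: v_i=M\}$ and let $C$ be the connected component, containing $\mathbf s$, of the subforest of $T$ induced on $A$. The identity above applies verbatim at every degree-three vertex of $A$ (terminal or Steiner), showing each such vertex has all of its neighbours in $A$. Hence the only vertices of $C$ that can have a neighbour outside $C$—equivalently, a neighbour of strictly smaller $i$-th coordinate—are terminals of degree one or two. The goal is then to conclude $C=T$: every terminal would lie on $H$, contradicting the hypothesis that some two terminals differ in coordinate $i$, which completes the proof.

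The step I expect to be the main obstacle is precisely closing this last deduction, namely excluding a degree-two terminal $\mathbf v\in C$ with one edge inside $H$ and the other descending below $H$ at an included angle $\ge 120\degree$. Such a junction satisfies Theorem~\ref{onetwenty}, and the force-balance identity simply does not hold at a degree-two vertex, so no local or single-point variational move forbids it; indeed translating the confined Steiner points of $C$ off $H$ only lengthens the in-face edges, so the confined configuration is a critical point for its own topology. The obstruction must therefore be removed globally: one should show that a subtree confined to the face $H\cap\operatorname{conv}(P)$ but attached to terminals strictly below $H$ is never part of a globally optimal tree, by exhibiting a topology-changing exchange—merging the in-face branch and the descending branch at a new Steiner point that dips below $H$ toward the lower terminals—and proving it strictly shortens $T$. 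Establishing that this global exchange always gives a \emph{strict} improvement, and in particular handling the borderline case of an exactly $120\degree$ junction where the gain is only second order, is the heart of the argument; the force-balance projection of the earlier steps is then routine.
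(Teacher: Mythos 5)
Your force-balance computation is correct, and up to the point where you invoke it you have essentially reproduced (in a cleaner form) the paper's local argument: the three edges at a Steiner point meet pairwise at at least $120\degree$, so the three unit vectors toward its neighbours sum to zero, and projecting onto coordinate $i$ shows that a Steiner point with $s_i=\max_{\mathbf p\in P}p_i$ must have all three neighbours on the bounding hyperplane $H$. (The paper reaches the same local contradiction by arguing that a boundary Steiner point with one neighbour on $H$ and one off $H$ would force its third neighbour outside the convex hull.) The reduction of the lower bound to the upper bound by negating the coordinate is also fine.

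The problem is that you stop before the proof is finished. After propagating the boundary condition you are left with a component $C$ of boundary vertices that can attach to the rest of the tree only through terminals of degree at most two, and you explicitly decline to exclude this configuration: you observe (correctly) that it satisfies Theorem~\ref{onetwenty} and is a critical point of its own topology, and you only gesture at a topology-changing exchange without constructing it or proving that it strictly shortens the tree. Without that step there is no contradiction, so the proposal does not prove the lemma; this is a genuine gap, not a routine detail to be filled in. For comparison, the paper's proof derives its contradiction at a Steiner point on the boundary that is adjacent to a point off the boundary, and it simply asserts that such a Steiner point exists (from connectivity, convex-hull containment, and the existence of a terminal off the hyperplane); it does not address the degree-two-terminal bottleneck that your analysis surfaces. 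So you have correctly isolated the delicate point of the argument, but a complete proof would need either to justify that existence assertion directly or to carry out the global exchange you sketch, and your proposal does neither.
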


\begin{proof}
Assume not. Without loss of generality, we may assume that the statement does not hold for $i=1$ and the bounds on the coordinate are $\min_{\mathbf{p} \in P} p_1 = 0$ and $\max_{\mathbf{p} \in P}  p_1 = 1$. We prove the lower bound and the upper bound follows analogously.

Let $T$ be an optimal Steiner tree for $P$. Suppose some Steiner point has first coordinate $0$. Observe that some neighbor of the Steiner point must also have first coordinate $0$ or else increasing the first coordinate of the Steiner point by an infinitesimal amount decreases the cost of the tree. 

Now, since some terminal has nonzero first coordinate, there exists some Steiner point $\mathbf{s}$ with first coordinate $0$ neighboring a point $\mathbf{x}$ with positive first coordinate. This uses the fact that $T$ is contained in the convex hull of $P$ by Theorem \ref{convexhull}. By the above, $\mathbf{s}$ also has a neighbor $\mathbf{y}$ with first coordinate $0$. Now recall Theorem \ref{onetwenty} and Corollary \ref{coplanar}: there are exactly three coplanar lines incident to $\mathbf{s}$. Any plane is defined by precisely two orthogonal lines. Using  $\overleftrightarrow{\mathbf{s}\mathbf{y}}$ as one of the two lines defining the plane, the other line must have non-fixed first coordinate. But we claim that the 120 degree angle property and coplanarity imply then that the 3\textsuperscript{rd} neighbor of $\mathbf{s}$, $\mathbf{z}$, has negative first coordinate. To see this, note that $\overleftrightarrow{\mathbf{s}\mathbf{y}}$ cuts the plane containing the lines incident to $\mathbf{s}$ into two parts. One side strictly contains $\mathbf{x}$ and the other strictly contains $\mathbf{z}$. Since the other line defining the plane has non-fixed first coordinate, this yields the claim.

However, we know from Theorem \ref{convexhull} that optimal Euclidean Steiner trees are contained in the convex hull of the terminal set, so this contradicts the optimality of $T$.
\end{proof}

In fact, we can apply this lemma to prove that all Steiner points must be strictly contained in the convex hull of the terminal configuration. Note that, as usual, this result assumes that our optimal Steiner trees do not contain trivial Steiner points (that is, Steiner points subdividing a line).
\begin{corollary}
For every finite $P \subseteq \R^d$, in any optimal Steiner tree $T$ of $P$,  all Steiner points in $T$ are strictly contained in the convex hull of $P$.
\end{corollary}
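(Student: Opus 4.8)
The plan is to derive the strict containment from the coordinate-wise bounds of Lemma~\ref{lem: strict coordinate bounds} by exploiting the rotational invariance of the Euclidean Steiner tree problem. Lemma~\ref{lem: strict coordinate bounds} already shows that every Steiner point lies strictly inside the axis-aligned bounding box of $P$, but this is weaker than what we want: the convex hull can be far smaller than its bounding box, and a point can sit strictly inside the box while still lying on a facet (or lower-dimensional face) of the hull. The central observation is that Lemma~\ref{lem: strict coordinate bounds} is stated for one fixed coordinate system, yet the Euclidean Steiner tree problem is invariant under isometries; this lets us invoke the lemma in any orthonormal frame we like---in particular, one adapted to a putative supporting hyperplane through a Steiner point.

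First I would reduce to the full-dimensional case. By Theorem~\ref{convexhull} all Steiner points lie in the convex hull of $P$, hence in its affine hull, so the entire tree $T$ lives in $\mathrm{aff}(P)$; identifying $\mathrm{aff}(P)$ isometrically with $\R^k$ (where $k = \dim \mathrm{aff}(P)$) reduces the claim to showing that the Steiner points lie in the interior of $\mathrm{conv}(P)$ within $\R^k$, which is exactly the relative interior in the original space (the sense in which ``strictly contained'' should be read). After this reduction $P$ is full-dimensional, so for each coordinate $i$ there must be terminals differing in the $i$-th coordinate (otherwise $P$ would lie in a coordinate hyperplane), and the hypothesis of Lemma~\ref{lem: strict coordinate bounds} is met. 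Crucially, this hypothesis is preserved under any rotation, since rotations preserve full-dimensionality.

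The heart of the argument is a supporting-hyperplane contradiction. Suppose some Steiner point $\mathbf{s}$ lies on the boundary of $\mathrm{conv}(P)$. Then there is a supporting hyperplane at $\mathbf{s}$, i.e.\ a unit vector $\mathbf{a}$ and a scalar $c$ with $\langle \mathbf{a}, \mathbf{p}\rangle \leq c$ for every $\mathbf{p} \in P$ and $\langle \mathbf{a}, \mathbf{s}\rangle = c$. Since $\mathbf{s} \in \mathrm{conv}(P)$ we have $c = \langle \mathbf{a}, \mathbf{s}\rangle \leq \max_{\mathbf{p} \in P}\langle \mathbf{a}, \mathbf{p}\rangle \leq c$, so in fact $\max_{\mathbf{p} \in P}\langle \mathbf{a}, \mathbf{p}\rangle = c$. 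Now apply an orthogonal transformation $R$ sending $\mathbf{a}$ to the first standard basis vector. Since $R$ is an isometry, $R(T)$ is an optimal Steiner tree of $R(P)$, and in this rotated frame the first coordinate of every terminal is $\langle \mathbf{a}, \mathbf{p}\rangle \leq c$ with maximum equal to $c$, while $R(\mathbf{s})$ has first coordinate exactly $c = \max_{\mathbf{p}} (R\mathbf{p})_1$. This directly contradicts the strict upper bound $s_1 < \max_{\mathbf{p}} p_1$ of Lemma~\ref{lem: strict coordinate bounds} applied to $R(P)$ (whose hypothesis holds because $R(P)$ is still full-dimensional). Hence no Steiner point lies on the boundary, proving the corollary.

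I expect the only delicate points to be the bookkeeping around degenerate (non-full-dimensional) configurations and the precise meaning of ``strictly contained,'' which must be interpreted as the relative interior of the convex hull. The reduction to $\mathrm{aff}(P)$ handles this, but one should verify that the hypothesis of Lemma~\ref{lem: strict coordinate bounds} genuinely transfers through both the affine identification and the subsequent rotation; everything else is a routine application of standard supporting-hyperplane facts.
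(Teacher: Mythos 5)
Your proof is correct and follows essentially the same route as the paper: reduce to the full-dimensional case, rotate so that a hyperplane touching the offending Steiner point becomes a coordinate hyperplane, and invoke the strict coordinate bounds of Lemma~\ref{lem: strict coordinate bounds}. The only cosmetic difference is that you take a supporting hyperplane at the specific boundary point while the paper enumerates the facets of the convex hull; both reduce to the same application of the lemma.
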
 \label{cor: strict containment in Convex Hull}
\begin{proof}
Suppose not. Suppose without loss of generality that the point configuration $P$ cannot be embedded in fewer than $d$ dimensions. Then, the convex hull of $P$ is the intersection of $(d-1)$-dimensional hyperplanes (and the Steiner tree $T$ is contained in the intersection of half-spaces). For each hyperplane composing part of the boundary of the convex hull, there is some terminal not contained in that hyperplane (or else the point configuration is embeddable in $\R^{d-1}$). Now, by rotating and translating the point configuration, we may assume that that hyperplane is a coordinate hyperplane corresponding to the first coordinate and that the interior of the convex hull is contained in the halfspace given by $\{\mathbf{x} \in \R^d \,:\, x_1 \geq 0\}$. Namely, $0$ is a lower bound on the first coordinate of each Steiner point and the upper bound is strictly greater than $0$ (since some terminal is not contained in the hyperplane). Then, by Lemma \ref{lem: strict coordinate bounds}, no Steiner point can lie on this coordinate hyperplane and, hence, no Steiner point could lie on the hyperplane before translation and rotation either.

The same procedure applies to all hyperplanes making up the convex hull of $P$ and hence the result follows.
\end{proof}

\subsection{Degree constraints on terminals}
We give a coordinate-based sufficient condition for a terminal being a 
leaf node in an optimal Steiner tree.
\begin{lemma} \label{lem: max pt prop}
Let $P \subseteq \R^d$. Suppose there is some point $\mathbf{p} = (p_1, \ldots, p_d) \in P$   such that for each $i \in [d]$ we have $p_i = \max\{q_i \,:\, \mathbf{q} \in P\}$ or $p_i = \min\{q_i \,:\, \mathbf{q} \in P\}$. Then, $\mathbf{p}$ is a leaf node in every optimal Steiner tree $T$ of $P$.
\end{lemma}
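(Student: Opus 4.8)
The plan is to argue by contradiction: I will assume $\mathbf{p}$ has at least two incident edges in an optimal Steiner tree $T$ and derive a violation of the $120\degree$ angle condition of Theorem \ref{onetwenty}. The geometric intuition is that $\mathbf{p}$, taking an extremal value in every coordinate, is a corner of the axis-aligned bounding box of $P$, so every edge leaving $\mathbf{p}$ must point ``into'' the box, confining all incident edge directions to a single orthant.

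First I would record sign data: for each $i \in [d]$ set $\sigma_i = +1$ if $p_i = \min\{q_i : \mathbf{q} \in P\}$ and $\sigma_i = -1$ if $p_i = \max\{q_i : \mathbf{q} \in P\}$ (choosing arbitrarily when both hold, i.e.\ when that coordinate is constant across $P$). Next I would observe that every vertex of $T$ lies in the bounding box $\prod_{i} [\min_{\mathbf{q}} q_i, \max_{\mathbf{q}} q_i]$: terminals do by definition, and Steiner points do by Theorem \ref{convexhull}, since the convex hull of $P$ is contained in this box. Consequently, for any neighbor $\mathbf{q}$ of $\mathbf{p}$ and any coordinate $i$, the displacement satisfies $\sigma_i (q_i - p_i) \geq 0$; normalizing, each incident edge direction $\mathbf{u} = (\mathbf{q}-\mathbf{p})/\norm{\mathbf{q}-\mathbf{p}}$ satisfies $\sigma_i u_i \geq 0$ for every $i$, i.e.\ $\mathbf{u}$ lies in the orthant selected by $\sigma$.

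The key step is then a one-line inner-product computation: if $\mathbf{u}$ and $\mathbf{v}$ are two edge directions incident to $\mathbf{p}$, then for each $i$ we have $u_i v_i = (\sigma_i u_i)(\sigma_i v_i) \geq 0$, so $\mathbf{u}\cdot\mathbf{v} = \sum_i u_i v_i \geq 0$. Hence the angle between any two edges at $\mathbf{p}$ is at most $90\degree$. This contradicts Theorem \ref{onetwenty}, which forces that angle to be at least $120\degree$ whenever $\mathbf{p}$ has two incident edges. Therefore $\mathbf{p}$ has at most one incident edge and is a leaf of $T$ (the tree being connected with at least two vertices, save for the trivial case $|P| \leq 1$).

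I expect no serious obstacle here; the argument is short once the orthant observation is isolated. The only points requiring care are (i) confirming that Steiner-point neighbors, not just terminal neighbors, respect the bounding box, which is exactly Theorem \ref{convexhull}, and (ii) the degenerate case of a coordinate constant across $P$, where the corresponding edge component is forced to $0$ so the arbitrary choice of $\sigma_i$ is harmless. One could alternatively invoke Lemma \ref{lem: strict coordinate bounds} to place Steiner neighbors strictly inside the box, but convex-hull containment already suffices and avoids the nonconstancy hypothesis of that lemma.
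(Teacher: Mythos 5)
Your proposal is correct and follows essentially the same route as the paper's proof: both deduce from Theorem \ref{convexhull} that all neighbors of $\mathbf{p}$ lie in the convex hull (hence the bounding box), conclude that the two incident edge directions have coordinatewise-consistent signs so their inner product is nonnegative, and contradict the $120\degree$ condition of Theorem \ref{onetwenty}. Your explicit handling of the degenerate constant-coordinate case is a minor extra care the paper leaves implicit.
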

\begin{proof}
Let $T$ be a relatively minimal Steiner tree of $P$. Suppose that $\mathbf{p}$ has two lines incident to it with direction vectors $\vec{u}$ and $\vec{v}$ from $\mathbf{p}$ to the other endpoint of the edges, respectively. From Theorem \ref{convexhull}, it follows that the other endpoints must be contained in the convex hull of $P$ (whether the other endpoint be another terminal or a Steiner point). Therefore, in each coordinate $\vec{u}$ and $\vec{v}$ are either both non-negative or non-positive, depending on whether $\mathbf{p}$ is maximal or minimal in that coordinate.

Every pair of edges in $T$ sharing a node intersect at an angle of at least $120 \degree$. But, $\dotproduct{u}{v} \geq 0$ and thus they form an angle of at most $90 \degree$, contradicting the optimality of $T$, as desired.
\end{proof}

\section{The topology of Steiner trees of the regular simplex} \label{sec: top conj}
In this section, we consider a conjecture of Smith about the topology of the optimal Steiner tree of the regular simplex (Conjecture 2 of \cite{smith}). This conjecture extends the conjecture of \cite{gilchung} to regular simplices of all sizes. Smith verified the conjecture up to the regular $11$-simplex (by running Smith's algorithm, we confirm that it holds for the regular $12$-simplex as well). We provide new intuition for the conjecture and observe that trees with the conjectured topology have important extremal combinatorial properties with ties to chemical graph theory and the study of phylogenetic trees in computational biology.

\subsection{The conjectured topology}
\label{subsec: conjectured topology}
From Lemma \ref{lem: max pt prop}, we know that in any optimal Steiner tree of (the vertices of) a regular simplex, the leaf nodes are exactly the terminal nodes. Hence, there are exactly $n-2$ Steiner points (since each Steiner point is of degree precisely $3$).

Let $\mathbf{e_i}$ and $\mathbf{e_j}$ be a pair of terminals with a topology-preserving coordinate permutation $\sigma: [d] \to [d]$ between them in some optimal Steiner tree of the regular simplex, $T$. Consider rooting $T$ at $\mathbf{e_i}$. Then, we apply $\sigma$ to each level of $T$ rooted at $\mathbf{e_i}$ to get $T$ rooted at $\mathbf{e_i}$. Such topology-preserving coordinate permutations are hence very restrictive and lead to extensive symmetries in the Steiner trees. 

When does a topology-preserving coordinate permutation exist? Suppose that there exists an induced full binary subtree rooted at some Steiner point, with all its leaf nodes terminals. Then, the two subtrees induced by the children of the root can be interchanged by a permutation swapping the coordinates corresponding to the terminals in each subtree. Indeed, this observation may be repeated to yield a topology preserving coordinate permutation between any pair of terminals in the full subtree. We revisit this idea more rigorously in Section \ref{sec: constructing trees}.

 When $d = 2^k$, it is even possible to have topology-preserving coordinate permutation between every pair of terminals. One such topology permitting this is two full binary trees with $2^{k-1}$ terminals leaves each with their Steiner point root nodes connected by an edge (see Figure \ref{fig:rigid structure}). \input{Rigid_Structure}
 
 Given the rigid structure induced by topology-preserving coordinate permutations, it feels plausible that trees exhibiting large full binary subtree structures are extremal. The topologies of the optimal Steiner trees (derived computationally via Smith's algorithm) are shown in Figure \ref{fig: optimal topologies}. 

 \begin{figure}[h!]
    \centering

\input{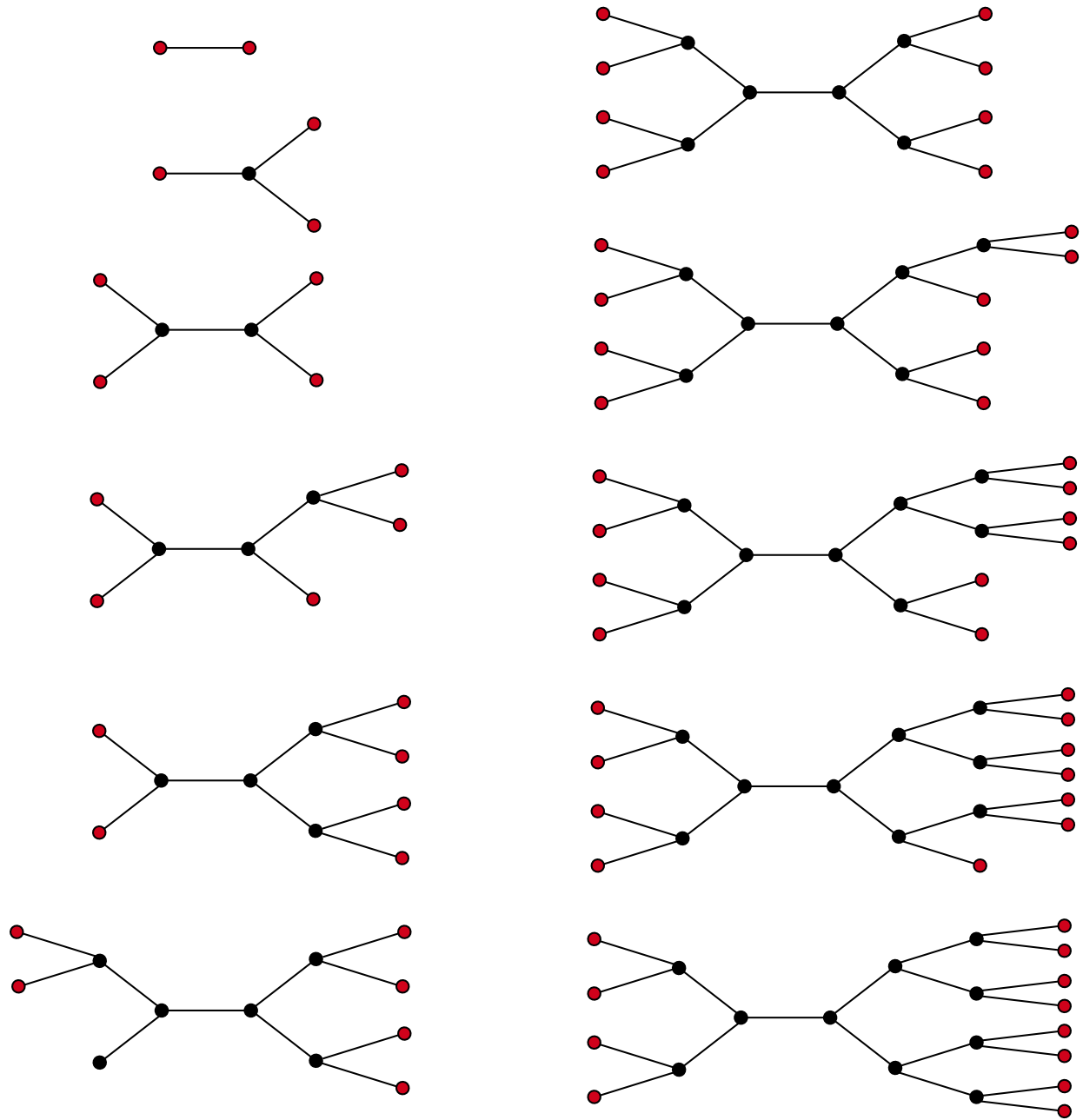}

    \caption{The optimal topology of the Euclidean Steiner tree of the regular simplex with up to $12$ vertices, terminals marked red.}
    \label{fig: optimal topologies}
\end{figure}

 These topologies interpolate between the pair of full binary trees topologies for number of terminals a power of $2$. We make this notion rigorous. We define a \textit{good} binary tree of height $k$ recursively. For height $0$, it is a single node. For height $k > 0$, at most one subtree of the children of the root is a \textit{good} binary tree of height $k-1$ and the remaining subtrees are full binary trees of height $k-1$ or $k-2$.  Observe that in any good binary tree of height $k$ the shortest distance from a root to a leaf node must be at least $k-1$ (this follows by a simple inductive argument). Hence, a good binary tree of height $k$ contains strictly more than $2^{k-1}$ leaf nodes (and at most $2^k$ leaf nodes).

 \begin{claim}
For $k \geq 1$ and $2^k < d \leq 2^{k+1}$ leaf nodes,  there is a unique good tree of height $k + 1$ (up to isomorphism) with that number of leaf nodes.
 \end{claim}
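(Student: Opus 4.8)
The plan is to prove, by induction on the height $h \ge 1$, the slightly stronger statement that the number of leaves gives a \emph{bijection} between isomorphism classes of good binary trees of height $h$ and the integers in the interval $\{2^{h-1}+1, \dots, 2^{h}\}$; the claim is then the case $h = k+1$. The base case $h = 1$ is immediate: the only good binary tree of height $1$ is a root with two leaf children, which has $2 = 2^{1}$ leaves, and $\{2^{0}+1,\dots,2^{1}\} = \{2\}$.

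For the inductive step the first move is a structural normalization of the recursive definition. A good binary tree of height $h$ is a root together with its two subtrees, at most one of which is a good binary tree of height $h-1$ while the remaining one is a full binary tree of height $h-1$ or $h-2$; moreover at least one subtree must have height $h-1$ for the tree to actually attain height $h$. Since a full binary tree is itself a (degenerate) good binary tree, I would fold the ``both subtrees full'' possibility into the general case and write every good binary tree of height $h$ as a root joined to a good subtree $S$ of height $h-1$ and a full subtree $F$ of height $h-1$ or $h-2$. Full binary trees of a fixed height are unique up to isomorphism, so $F$ is pinned down by the single parameter $b \in \{2^{h-2}, 2^{h-1}\}$ (its leaf count), while by the inductive hypothesis $S$ is determined up to isomorphism by its leaf count $a$, which ranges over $\{2^{h-2}+1,\dots,2^{h-1}\}$.

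The heart of the argument is then a short bookkeeping computation on $d = a+b$. When $b = 2^{h-2}$ the sum ranges exactly over $\{2^{h-1}+1,\dots,3\cdot 2^{h-2}\}$, and when $b = 2^{h-1}$ it ranges exactly over $\{3\cdot 2^{h-2}+1,\dots,2^{h}\}$. These two intervals are disjoint, their union is precisely the target set $\{2^{h-1}+1,\dots,2^{h}\}$, and on each interval the map $a \mapsto a+b$ is a bijection. Existence is then immediate: given $d$ in the target interval, the subinterval containing $d$ forces the value of $b$, after which $a = d-b$ lies in the admissible range for $S$, and the inductive hypothesis together with uniqueness of full binary trees produces a good binary tree of height $h$ with exactly $d$ leaves.

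The step I expect to need the most care is uniqueness, since the decomposition into $(S,F)$ is not a priori canonical: a subtree that happens to be a full binary tree could in principle play either role. Here I would invoke the strict inequality from the Observation preceding the claim, namely that a good binary tree of height $h-1$ has \emph{strictly} more than $2^{h-2}$ leaves. This rules out the spurious decomposition: if $d \le 3\cdot 2^{h-2}$ then taking $b = 2^{h-1}$ would force $a = d - 2^{h-1} \le 2^{h-2}$, contradicting $a > 2^{h-2}$, so $b = 2^{h-2}$ is forced; symmetrically $d > 3\cdot 2^{h-2}$ forces $b = 2^{h-1}$. Hence $d$ determines $b$, then $a = d-b$, and therefore (by the inductive hypothesis for $S$ and the uniqueness of full binary trees for $F$) the isomorphism classes of both subtrees, and with them the isomorphism class of the whole tree. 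This closes the induction and establishes the claim.
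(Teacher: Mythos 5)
Your proof is correct and follows essentially the same route as the paper's: induct, decompose the root's two subtrees into a good subtree and a full subtree, and use the bounds (a good tree of height $k$ has strictly more than $2^{k-1}$ and at most $2^k$ leaves) to show that the leaf count $d$ forces the full subtree's height via the case split at $3\cdot 2^{h-2}$. Your write-up is slightly more complete in that it explicitly verifies existence (surjectivity of the leaf-count map) and justifies the normalization of the decomposition, both of which the paper leaves implicit, but the underlying argument is identical.
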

 \begin{proof}
    We prove this by induction. This is clear for $d = 2$ leaves: the tree must be height $1$ and hence must be a full binary tree of height $1$. 

    Now, assume this holds for all $d < r$ and assume $2^k < r \leq 2^{k+1}$. Then, if $r \leq 2^k + 2^{k-1}$, by definition of a good binary tree of height $k + 1$ and the fact that any good tree of height $k$ contains strictly more than $2^{k-1}$ leaf nodes, one of the subtrees of the children of the root must be a full binary tree of height $k-1$. Then the other subtree must be a good subtree of height $k - 1$ on $r - 2^{k-1}$ leaves, which, by induction, is unique up to isomorphism.

    If $r > 2^k + 2^{k-1}$, since a good tree of height $k$ contains at most $2^k$ leaf nodes, one of the subtrees of the children of the root must be a full binary tree of height $k$. Then, the other subtree must be a good subtree of height $k-1$ on $r - 2^k$ leaves, yielding the desired via the inductive hypothesis.
 \end{proof}

Finally, we can state the conjecture, a reformulated version of a conjecture of Smith \cite{smith}.
 \begin{conjecture}[Optimal Topology of Steiner Trees of the Regular Simplex, Conjecture 2 of \cite{smith}] \label{conj: top of regular simplex}
Let $2 \leq 2^k < d \leq 2^{k+1}$. The topology of the optimal Steiner tree of a regular $d$-simplex is formed by taking the good tree of height $k+1$ on $d$ leaf nodes, removing the root node, and reconnecting the tree via an edge between the former children of the root.
 \end{conjecture}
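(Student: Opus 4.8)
The plan is to reduce the conjecture to a finite combinatorial optimization over \emph{full} Steiner topologies and then to show that the good-tree topology minimizes the resulting length functional. By Lemma \ref{lem: max pt prop}, every vertex $\mathbf{e}_i$ of the regular $d$-simplex is coordinate-wise extremal and hence a leaf in any optimal Steiner tree, so every optimal tree is full, has exactly $d-2$ degree-$3$ Steiner points, and therefore realizes some unrooted binary topology $\tau$ on the $d$ terminals (each internal node of degree $3$). By the Uniqueness Theorem (Theorem \ref{unique}), each $\tau$ determines a unique relatively minimal length $L(\tau)$, and the conjecture becomes the statement that the good-tree topology $\tau^\star$ (with the central edge standing in for the removed root) is the unique minimizer of $L$ over all such $\tau$.

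First I would make the functional $L$ computable. Using the high-dimensional Melzak/Hwang merging procedure --- legitimate here because Corollary \ref{coplanar} guarantees each Steiner point is locally planar with $120\degree$ incident angles --- one can collapse a cherry $\{\mathbf{e}_i,\mathbf{e}_j\}$ meeting at a common Steiner point into a single \emph{equivalent point}, reducing a full topology on $d$ terminals to one on $d-1$ effective points whose relatively minimal length equals $L(\tau)$. Exploiting the $S_d$-symmetry of the simplex, I expect the repeated merges along the balanced branches of $\tau^\star$ to telescope into a closed form for $L(\tau^\star)$; this is essentially what the explicit construction of Section \ref{sec: constructing trees} supplies, and I would take it as the matching upper bound.

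The core of the argument is the lower bound $L(\tau)\ge L(\tau^\star)$ for every $\tau$. I would attempt this by an \textbf{exchange / rebalancing argument}: define a local rotation at an internal edge that moves the topology toward balance, and prove the key monotonicity lemma that each such rotation never increases $L$, and strictly decreases it unless the edge is already balanced. Since the preceding Claim shows the good tree is the \emph{unique} maximally balanced binary tree on $d$ leaves, iterating these rotations transports an arbitrary $\tau$ to $\tau^\star$ with non-increasing length, giving optimality and uniqueness at once. Symmetry should keep the comparison tractable: after a rotation the two realizations differ only locally, so the length difference can be localized to the sub-configurations produced by Melzak's merges.

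The main obstacle is precisely this monotonicity lemma. Controlling how $L(\tau)$ changes under a rotation requires understanding the edge lengths and $120\degree$ angles of the relatively minimal tree after the high-dimensional merges, and --- unlike the planar setting exploited by Gilbert--Pollak --- we cannot appeal to non-crossing edges or to a convex cyclic ordering of the terminals. A naive induction on $d$ also fails, because collapsing a cherry destroys the regular-simplex structure: the equivalent point is not equidistant from the remaining vertices. Overcoming this likely demands a quantitative form of the rigidity heuristic of Section \ref{subsec: conjectured topology}, showing that topological imbalance forces a measurable length penalty, uniformly in $d$; establishing such a penalty is, I expect, the crux that has kept Smith's conjecture open.
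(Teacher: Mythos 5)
The statement you are asked to prove is a \emph{conjecture}: the paper does not prove it, explicitly attributes it to Smith as an open problem, and offers only computational verification up to $d=12$ together with heuristic motivation (topology-preserving coordinate permutations, extremality for the terminal Wiener index, semi-regularity). So there is no paper proof to compare against, and your proposal does not close the gap either: by your own admission, the entire content of the conjecture is concentrated in the unproved ``monotonicity lemma'' that a rebalancing rotation never increases the relatively minimal length $L(\tau)$. Everything before that point (terminals are leaves by Lemma \ref{lem: max pt prop}, hence the tree is full with $d-2$ degree-$3$ Steiner points, hence $L$ is a well-defined functional on unrooted binary topologies by Theorem \ref{unique}, and the good tree is the unique balanced topology by the Claim in Section \ref{subsec: conjectured topology}) is a correct but routine reduction that the paper already carries out implicitly; the exchange step is where the open problem lives, and asserting that it ``should'' follow from a quantitative rigidity heuristic is not an argument.

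Beyond the acknowledged gap, the one concrete tool you propose --- a ``high-dimensional Melzak/Hwang merging procedure'' --- does not exist in the form you need. Melzak's merge replaces a cherry by an equivalent point obtained as the third vertex of an equilateral triangle on the two merged points; in the plane there are two candidate positions and the construction is exact, but in $\R^{d}$ with $d\ge 3$ the locus of candidate equivalent points is a whole circle (the third vertex can rotate about the axis through the cherry), so the merge is not well defined without already knowing the direction of the edge leaving the cherry's Steiner point. This is precisely why Smith \cite{smith} resorts to iterative numerical optimization rather than exact Melzak reduction, and why the paper's own constructions (Lemma \ref{symmetries}, Theorem \ref{twice}) go through centroids and splits rather than merges. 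Local planarity from Corollary \ref{coplanar} does not rescue the construction, since the plane containing a Steiner point's three edges is itself an unknown of the optimization. Consequently both the ``closed form for $L(\tau^\star)$'' and the ``localized'' length comparison after a rotation rest on machinery that is unavailable, and the proposal should be read as a research programme rather than a proof.
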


In the next section we provide additional motivation for this topology, remarking that these trees are extremal with respect to a well-studied index of acyclic graphs from chemical graph theory.

\subsection{Indices from chemical graph theory}
Throughout this subsection, we will consider trees of the form of full Steiner trees: that is, trees with all non-leaf nodes of degree exactly three. Our connection to computational biology is simple: phylogenetic trees are precisely trees with this structure. We begin by defining several notions from chemical graph theory. The \textit{Wiener index} \cite{Wiener_1947} of a tree is the sum of pairwise hop-distances between nodes in the tree. I.e., for a tree $T$,
\[
W(T) = \sum_{u,v \in V(T)} d_T(u,v).
\]
Since its introduction in 1947, this has been one of the most widely used metrics in the study of quantitative structure-activity relationships in chemistry. See \cite{dobrynin2001wiener} for a thorough survey. 

A related, more recently introduced index is the \textit{terminal Wiener index} \cite{Gutman_Furtula_Petrovic_2009} of a tree, the Wiener index restricted only to pairs of leaf nodes. This index has garnered significant attention since its introduction \cite{Deng_Zhang_2012, Cheng_Zhang_2013, Humphries_Wu_2013, Zeryouh_Marraki_Essalih_2014, Zeryouh_El_Marraki_Essalih_2016, Ramane_Bhajantri_Kitturmath_2021, Ramane_Bhajantri_Kitturmath_2021, nayaki2022diagrammatic, nayaki2022physical, Sulphikar_2022}. Formally, let $T$ be a tree and $H$ an induced subgraph of $T$. Then, let $\ell(H)$ be the set of leaf nodes from $T$ contained in $H$. For any edge $(u,v) \in E(T)$, let $(C_u, C_v)$ be the cut induced by the edge. We have that
\[
\Gamma(T) = \sum_{u, v \in \ell(T)} d_T(u,v) = \sum_{(u,v) \in E(T)} |\ell(C_u)| |\ell(C_v)|.
\]

This brings us to our main connection to these indices.
\begin{theorem}[\cite{Humphries_Wu_2013, Szekely2011}]
The trees of the form described in Conjecture \ref{conj: top of regular simplex} are the unique full Steiner trees minimizing the terminal Wiener index.
\end{theorem}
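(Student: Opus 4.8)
The plan is to work entirely with the edge-decomposition formula $\Gamma(T)=\sum_{(u,v)\in E(T)}|\ell(C_u)|\,|\ell(C_v)|$ recorded just above the statement, and to reduce the minimization to a recursion over rooted binary trees. Every full Steiner tree on $d$ leaves has exactly $d$ pendant edges, each splitting the leaf set as $1$ versus $d-1$, so these contribute a fixed total $d(d-1)$; hence minimizing $\Gamma$ is the same as minimizing $\sum_{e\text{ internal}} a_e(d-a_e)$, where $a_e,d-a_e\ge 2$ are the leaf-counts on the two sides of an internal edge $e$. First I would cut $T$ at an internal edge $e^\ast$, splitting it into two rooted binary trees $A,B$ with $a$ and $b=d-a$ leaves, and write
\begin{equation*}
\Gamma(T)=d(d-1)+ab+G_d(A)+G_d(B),\qquad G_d(R):=\sum_{v\ne\mathrm{root}(R)} s(v)\,(d-s(v)),
\end{equation*}
where $s(v)$ counts the leaves below $v$. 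Crucially, the cut at $e^\ast$ mirrors the construction in Conjecture \ref{conj: top of regular simplex} exactly: deleting the root of a good tree of height $k+1$ and joining its two children is precisely the operation producing $A$, $B$ and the central edge $e^\ast$. Ranging over the central edge and over $A,B$ gives
\begin{equation*}
\min_T \Gamma(T)=d(d-1)+\min_{\substack{a+b=d\\ a,b\ge 2}}\big[\,ab+g_d(a)+g_d(b)\,\big],\qquad g_d(n):=\min_{R:\,n\text{ leaves}}G_d(R),
\end{equation*}
with $g_d$ obeying $g_d(n)=\min_{n_1+n_2=n}[\,n_1(d-n_1)+n_2(d-n_2)+g_d(n_1)+g_d(n_2)\,]$ and $g_d(1)=0$.

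The key computational device is the identity $G_d(R)=d\,P(R)-\sum_{v}s(v)^2$, where $P(R)=\sum_{v\ne\mathrm{root}(R)}s(v)$ is the external path length. This exposes the structure of the optimum: the leading term $d\,P(R)$ forces minimal external path length, which is attained classically by the maximally balanced trees — exactly the good trees, whose leaves sit within one level of each other — while the lower-order term $-\sum_v s(v)^2$ provides the correction that pins down the precise good-tree shape and yields uniqueness. I would prove by strong induction on $n$ that $g_d(n)$ is attained exactly by the good trees, i.e.\ that the optimal split chips off a \emph{perfect} binary subtree of $2^{k-1}$ or $2^{k-2}$ leaves (for $2^{k-1}<n\le 2^{k}$) and recurses a good tree on the remainder, and that at the top level the optimal central split is the good-tree split $\{2^{k-1},d-2^{k-1}\}$ or $\{2^{k},d-2^{k}\}$ (for $2^{k}<d\le 2^{k+1}$), \emph{not} the most balanced split. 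The case $d=6$ already illustrates the subtlety: the good split $\{2,4\}$ strictly beats $\{3,3\}$ because the two candidates tie on the $g_d$ terms and the central term $ab$ ($8<9$) decides, so the naive ``halve at every node'' heuristic is provably wrong and the induction must carry the power-of-two structure.

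The main obstacle is exactly this split optimization, because the local term $n_1(d-n_1)+n_2(d-n_2)=dn-(n_1^2+n_2^2)$ is concave and is \emph{decreased} by unbalancing, while the recursive term $g_d(n_1)+g_d(n_2)$ pulls toward balance through the $d\,P$ part; likewise the central term $ab$ rewards unbalanced central splits. Resolving this competition requires quantitative control of $g_d$. I would obtain it by deriving the exact value of $g_d$ on good trees — it is affine in $d$, with the coefficient of $d$ equal to the (minimal) external path length and the constant equal to $-\sum_v s(v)^2$ — and pairing it with a strict lower bound $G_d(R)>g_d(n)$ for every non-good $R$ in the regime $d\ge 6$ that is the content of the theorem. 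Concretely, given a tree that is not good, I would locate an internal edge witnessing a depth discrepancy of at least two among leaves of a common block, transplant a minimal pendant subtree across that edge, and verify through the displayed recursion that $\sum_e a_e(d-a_e)$ strictly drops; the strictness (valid once $d\ge 6$) simultaneously delivers optimality and the ``unique'' claim. Finally I would translate back: the rooted good trees produced by the recursion become, upon deleting the root and joining its two children by an edge, exactly the unrooted topologies of Conjecture \ref{conj: top of regular simplex}, completing the identification of these trees as the unique minimizers of the terminal Wiener index.
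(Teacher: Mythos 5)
The paper does not actually prove this statement---it is imported from the cited works of Sz\'ekely et al.\ and Humphries--Wu---so there is no in-paper argument to measure yours against; what follows assesses your sketch on its own terms. The setup is sound and is in the spirit of how the result is proved in the literature: the edge-cut formula, the observation that the $d$ pendant edges contribute a fixed $d(d-1)$, the reduction to minimizing $\sum_{e\ \mathrm{internal}}a_e(d-a_e)$, the recursion $g_d$ over rooted binary trees, and the identity $G_d(R)=d\,P(R)-\sum_v s(v)^2$ are all correct (modulo one bookkeeping slip: as written $G_d$ sums over \emph{all} non-root vertices, including leaves, so adding $d(d-1)$ separately double-counts the pendant edges; you need $v$ to range over internal non-root vertices only). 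Your $d=6$ computation correctly isolates the central subtlety, namely that the optimal central split is $\{4,2\}$ rather than $\{3,3\}$.

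The genuine gap is that the two steps carrying all of the difficulty are left as plans rather than proofs. First, the claim that the leading term $d\,P(R)$ ``forces'' minimal external path length is unjustified: since $\sum_v s(v)^2$ can be as large as $n\cdot P(R)$ and $n$ is comparable to $d$, a unit saving in $P$ need not outweigh the change in $\sum_v s(v)^2$, so the lexicographic reading of $G_d=dP-\sum s^2$ does not stand on its own---this is exactly the ``competition'' you name but never resolve, and it is the heart of the theorem. Second, the exchange argument is underspecified in a way that matters. Trees cut at an internal edge into two rooted pieces each with leaves on two adjacent levels are \emph{not} only the good trees: the $6$-leaf caterpillar, rooted at its central edge, has leaf depths $1,2,2$ on each side (no discrepancy of two anywhere near the centre) and ties the snowflake on the $g_d$ terms, yet it is not the good topology. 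The move that actually improves it is a regraft of the depth-$1$ leaf next to the depth-$1$ leaf on the \emph{other} side of the centre (converting the $\{3,3\}$ split into $\{2,4\}$); transplanting a minimal pendant subtree ``across'' a discrepancy-witnessing edge, as you describe, can simply reproduce the caterpillar. Until the induction on $g_d(n)$ (showing the optimal split chips off exactly $2^{k-1}$ or $2^{k-2}$ leaves) and a correct, strictly improving exchange for \emph{every} non-good tree are written out, what you have is a credible outline of the Sz\'ekely/Humphries--Wu-style proof, not a proof.
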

Indeed, \cite{Humphries_Wu_2013} reveals several other interesting properties of this extremal topology. 
\begin{definition}[Semi-regularity]
Given a full Steiner tree $T$ and some pair $u,v$ of Steiner points, let $T_u^1$ and $T_u^2$ denote the subtrees rooted at the children of $u$ upon removing the path $u \leadsto v$.  Similarly define $T_v^1$ and $T_v^2$. For $H$ an induced subtree of $T$, let $P(H)$ denote set of terminals in $H$. The pair $(u,v)$ is \textit{semi-regular} if 
\[
\min(|P(T_v^1)|,|P(T_v^2)|) \geq \max(|P(T_u^1)|,|P(T_u^2)|)
\]
or
\[
\min(|P(T_u^1)|,|P(T_u^2)|) \geq \max(|P(T_v^1)|,|P(T_v^2)|).
\]
The tree $T$ is \textit{semi-regular} if every pair of Steiner points is semi-regular.
\end{definition}
If a pair were not semi-regular, then swapping a larger and a smaller subtree would result in a more balanced tree. Intuitively, this should not occur in optimal Steiner trees of the regular simplex due to its myriad symmetries. Indeed, we have the following.
\begin{theorem}[\cite{Humphries_Wu_2013}]
The unique semi-regular full Steiner trees are exactly the trees of the form described in Conjecture \ref{conj: top of regular simplex}.
\end{theorem}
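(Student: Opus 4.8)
The plan is to prove the two inclusions separately: that every good-tree topology from Conjecture~\ref{conj: top of regular simplex} is semi-regular, and conversely that every semi-regular full Steiner tree has that topology. Since the preceding Claim already shows the good-tree topology is unique up to isomorphism for each number of terminals, establishing both inclusions yields the stated characterization. Throughout, for a Steiner point $u$ I would record the three far-side terminal counts $s_1 \le s_2 \le s_3$ on its incident edges, and observe that a pair $(u,v)$ is semi-regular exactly when the two relevant branch-pairs are \emph{nested}, meaning the minimum of one is at least the maximum of the other.

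For the forward inclusion (good $\Rightarrow$ semi-regular) I would induct on the height $k$, using the recursive ``spine of perfect binary trees'' structure of the good tree. First I would note that every \emph{cherry} (a Steiner point adjacent to two terminals) has branch-pair $\{1,1\}$ and is therefore automatically semi-regular with every other Steiner point, so only pairs of non-cherries need to be checked. A short lemma describing the admissible far-side counts in a good tree — each such count is either a power of two (a hanging perfect subtree) or a good-subtree size — then makes the nesting condition routine to verify by cases, according to whether the two points lie inside one hanging perfect subtree, one lies on the spine, or both lie on the spine.

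The reverse inclusion (semi-regular $\Rightarrow$ good) is the main work, and I would prove it by strong induction on the number of terminals $n$. The reduction step is cherry contraction: take the Steiner point $u$ adjacent to an endpoint $\ell$ of a diameter of $T$, which is necessarily a cherry with leaves $\ell,\ell'$ and a third neighbour $w$, and replace $u,\ell,\ell'$ by a single terminal leaf attached to $w$, obtaining a tree $T'$ on $n-1$ terminals. Two things must be established: (i)~$T'$ is again semi-regular, so that by the inductive hypothesis $T'$ is the good tree on $n-1$ terminals; and (ii)~the original $T$ is recovered from $T'$ by expanding one of its leaves back into a cherry, and semi-regularity of $T$ forces this expansion to occur at a shallowest leaf in the canonical position, so that $T$ is exactly the good tree on $n$ terminals.

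The hard part is precisely steps (i) and (ii). For (i), contraction shrinks one far-side count at $w$ from $2$ to $1$ while decreasing $n$ by $1$, and one must verify this never converts a nested branch-pair into an overlapping one; this is exactly where choosing a \emph{diametral} (hence deepest) cherry is essential, since it guarantees that $w$'s affected branch was already the smaller side of every nesting in which it participates. For (ii), the real danger is illustrated by the two distinct topologies on $6$ terminals with leaves at depths $2$ and $3$: expanding the wrong leaf of the good tree on $5$ terminals produces the ``$3+3$'' tree, whose central Steiner-point pair has the overlapping branch-pairs $\{1,2\}$ and $\{1,2\}$ and therefore fails semi-regularity. The crux is thus to show that \emph{every} non-canonical cherry-expansion of the good tree on $n-1$ terminals creates such an overlapping pair somewhere along the path from the new cherry to the central edge, contradicting semi-regularity and pinning down the unique admissible attachment point.
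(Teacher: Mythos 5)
First, a point of comparison: the paper does not prove this statement at all --- it is imported verbatim from \cite{Humphries_Wu_2013} (see also \cite{Szekely2011}) --- so there is no in-paper argument to measure your proposal against; it has to stand on its own. Your forward inclusion (good $\Rightarrow$ semi-regular) is only an outline, but the cherry observation and the intended case analysis look plausible.

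The reverse inclusion, however, contains a genuine error at step (i): contracting a \emph{diametral} cherry does \emph{not} preserve semi-regularity. Take the good tree on $11$ terminals: a central edge $\mathbf{a}\mathbf{b}$, where $\mathbf{a}$ carries a perfect $4$-leaf subtree (cherries $\mathbf{a_1},\mathbf{a_2}$), and $\mathbf{b}$ has neighbours $\mathbf{c}$ (another perfect $4$-leaf subtree, cherries $\mathbf{c_1},\mathbf{c_2}$) and $\mathbf{d}$, with $\mathbf{d}$ adjacent to a cherry $\mathbf{e}$ and to a single terminal. This tree is semi-regular, its diameter is $6$, and that diameter is attained by leaves under $\mathbf{a_1}$, under $\mathbf{c_1}$, and under $\mathbf{e}$ alike, so the leaves of $\mathbf{a_1}$ are diametral. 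Contracting $\mathbf{a_1}$ leaves $\mathbf{a}$ with off-path branch-pair $\{1,2\}$ and $\mathbf{d}$ with off-path branch-pair $\{2,1\}$ along the path $\mathbf{a}\mathbf{b}\mathbf{d}$, so the pair $(\mathbf{a},\mathbf{d})$ violates semi-regularity in the contracted tree. Your stated justification --- that the affected branch ``was already the smaller side of every nesting in which it participates'' --- fails precisely here: for the pair $(\mathbf{a},\mathbf{d})$ in the original tree it is $\mathbf{a}$ that \emph{dominates} ($\min\{2,2\}\ge\max\{2,1\}$, with equality), and shrinking one of the dominating vertex's branches from $2$ to $1$ destroys that domination without creating the reverse one. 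Worse, the same example shows that the correct cherry to contract (here $\mathbf{e}$) cannot be singled out by any depth or diameter criterion, since $\mathbf{e}$ and $\mathbf{a_1}$ have identical depth profiles; the reduction must instead locate the cherry combinatorially (e.g.\ at the unique Steiner point whose two off-path branches are a cherry and a single terminal, when such a point exists), and the case where no such point exists needs separate treatment. Until the contraction step is repaired, the reverse inclusion --- which is the substance of the theorem --- is not established.
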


\section{Steiner trees of the regular simplex} \label{sec: constructing trees}
In this section we explicitly compute Steiner trees of the regular simplex. Our constructions are a stronger version of the results of \cite{gilchung}. In the first subsection, we show several useful properties about Steiner trees of the regular simplex. In the second subsection, we describe our candidate construction for optimal Steiner trees. In the third subsection, we will apply this construction to give explicit coordinates for the candidate-optimal Steiner trees on $d = 2^k$ terminals and use these explicit coordinates to analyze the limiting Steiner ratio of the regular simplex.

\subsection{Structural properties of Steiner trees of the regular simplex}
The following definition will be useful.

\begin{definition}[Extending Line]
    Let $T$ be a tree in the Euclidean space. Let $\mathbf{x}, \mathbf{y} \in V(T)$ be two adjacent points in $T$. The \emph{extending line} of the edge $(\mathbf{x}, \, \mathbf{y})$ is the line containing the segment corresponding to the edge $(\mathbf{x}, \, \mathbf{y})$.
\end{definition}

Applying Theorem \ref{unique} for Steiner trees of the regular simplex, we give the following lemma.
\begin{lemma}
\label{symmetries}
    Let $T$ be a relatively minimal Steiner tree of the regular $n$-simplex. Let $T'$ be an induced full binary subtree of $T$, with all leaf nodes of $T'$ terminals. Let $P(T')$ be the set of terminals in $T'$ and $\mathbf{r_{T'}}$ the root Steiner point of $T'$. Let $e_{T'}$ be the edge incident to $\mathbf{r_{T'}}$ that does not lie in $T'$. Then the following hold:
    \begin{enumerate}
        \item For all Steiner points $\mathbf{s}$ in $(T \setminus T') \cup \{ \mathbf{r_{T'}}\}$, for all coordinates $i, j$ such that terminals $\mathbf{e_i}, \mathbf{e_j} \in P(T')$, it holds that $s_i=s_j$.
         \item Let $\ell_{T'}$ be the line extending the edge $e_{T'}$. Then $\ell_{T'}$ passes through the centroid of the terminals in $P(T')$.
    \end{enumerate}
\end{lemma}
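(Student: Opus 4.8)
The plan is to derive both parts from the interplay between topology-preserving coordinate permutations and the Uniqueness Theorem (Theorem \ref{unique}), together with the equilibrium condition at Steiner points. The common setup: any coordinate permutation $\sigma \in S_n$ acts on $\mathbb{R}^n$ as an isometry $\phi_\sigma$ carrying the regular simplex to itself. If $\sigma$ is \emph{topology-preserving}, i.e.\ it is realized by a graph automorphism of $T$, then $\phi_\sigma(T)$ is a relatively minimal tree of the same topology as $T$ (an isometry preserves lengths and maps $\mathrm{top}(T)$ to itself), so by Theorem \ref{unique} we get $\phi_\sigma(T) = T$; thus $\phi_\sigma$ is an isometric automorphism of $T$. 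I will repeatedly use two elementary facts: (i) a graph automorphism of a tree all of whose internal vertices have degree $\geq 3$ is determined by its action on the leaves (each internal vertex is pinned down by the partition of leaves induced by deleting it); and (ii) at every degree-$3$ Steiner point of a relatively minimal tree the three outgoing unit edge-vectors meet pairwise at $120\degree$ and hence sum to $\mathbf{0}$ (this is the first-order optimality behind Theorem \ref{onetwenty} and Corollary \ref{coplanar}).

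For part 1, fix $\mathbf{e_i}, \mathbf{e_j} \in P(T')$. Since $T'$ is a full binary subtree, its automorphisms act transitively on its leaves and fix its root; as already noted in Section \ref{subsec: conjectured topology}, this yields a topology-preserving $\sigma$, realized by an automorphism $g$ of $T$ that acts as the identity on $(T \setminus T') \cup \{\mathbf{r_{T'}}\}$ and satisfies $\sigma(i) = j$. By the setup $\phi_\sigma$ realizes $g$ (using fact (i)), so $\phi_\sigma$ fixes every $\mathbf{s} \in (T \setminus T') \cup \{\mathbf{r_{T'}}\}$. Writing $\phi_\sigma(\mathbf{s}) = \mathbf{s}$ coordinatewise gives $s_k = s_{\sigma^{-1}(k)}$ for all $k$; taking $k = j$ and using $\sigma^{-1}(j) = i$ yields $s_i = s_j$, as desired.

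For part 2, I will induct on the height $k$ of $T'$, with $\mathbf{p}$ the endpoint of $e_{T'}$ outside $T'$. Let $T'_L, T'_R$ be the two child subtrees of the root $\mathbf{r_{T'}}$, with roots $\mathbf{r_L}, \mathbf{r_R}$ and terminal-centroids $\mathbf{g_L}, \mathbf{g_R}$; since $|P(T'_L)| = |P(T'_R)|$, the centroid of $P(T')$ is $\mathbf{g} = \tfrac12(\mathbf{g_L} + \mathbf{g_R})$. By the inductive hypothesis applied to $T'_L$ and $T'_R$, the line through $\mathbf{r_{T'}}$ and $\mathbf{r_L}$ contains $\mathbf{g_L}$ and the line through $\mathbf{r_{T'}}$ and $\mathbf{r_R}$ contains $\mathbf{g_R}$; hence $\mathbf{g_L} - \mathbf{r_{T'}} = \mu_L\,\hat{\mathbf{u}}(\mathbf{r_{T'}} \to \mathbf{r_L})$ and $\mathbf{g_R} - \mathbf{r_{T'}} = \mu_R\,\hat{\mathbf{u}}(\mathbf{r_{T'}} \to \mathbf{r_R})$ for scalars $\mu_L, \mu_R$. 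Because $T'_L$ and $T'_R$ are isomorphic, the subtree-swap is topology-preserving and gives an automorphism fixing $\mathbf{r_{T'}}$ and exchanging $\mathbf{g_L} \leftrightarrow \mathbf{g_R}$ and $\mathbf{r_L} \leftrightarrow \mathbf{r_R}$; being an isometry fixing $\mathbf{r_{T'}}$, it forces $\mu_L = \mu_R =: \mu$. Finally, the equilibrium (fact (ii)) at $\mathbf{r_{T'}}$ gives $\hat{\mathbf{u}}(\mathbf{r_{T'}} \to \mathbf{p}) = -\bigl(\hat{\mathbf{u}}(\mathbf{r_{T'}} \to \mathbf{r_L}) + \hat{\mathbf{u}}(\mathbf{r_{T'}} \to \mathbf{r_R})\bigr)$, so $\mathbf{g} - \mathbf{r_{T'}} = \tfrac{\mu}{2}\bigl(\hat{\mathbf{u}}(\mathbf{r_{T'}} \to \mathbf{r_L}) + \hat{\mathbf{u}}(\mathbf{r_{T'}} \to \mathbf{r_R})\bigr) = -\tfrac{\mu}{2}\,\hat{\mathbf{u}}(\mathbf{r_{T'}} \to \mathbf{p})$, i.e.\ $\mathbf{g}$ lies on $\ell_{T'}$. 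The base case $k=1$ is identical, with the height-$0$ children giving $\mathbf{r_L} = \mathbf{g_L} = \mathbf{e_i}$ and $\mathbf{r_R} = \mathbf{g_R} = \mathbf{e_j}$.

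The main obstacle is part 2: turning combinatorial symmetry into the metric statement that $\ell_{T'}$ hits the centroid. The one genuinely load-bearing point is $\mu_L = \mu_R$, which is exactly where I use that the two child subtrees are isomorphic (equal size, uniform height); this symmetry is what makes the resultant of the two child-edge unit vectors point toward the \emph{average} of the two centroids rather than some other weighted combination. I also want to stress that the fixed-space constraint coming from part 1 alone is insufficient here: it only forces the $P(T')$-coordinates along $\ell_{T'}$ to be mutually equal, not the line to pass through $\mathbf{g}$, so the equilibrium condition and the induction are both essential. Secondary points to check carefully are fact (i) (so that $\phi_\sigma$ genuinely realizes the identity-outside automorphism $g$) and the reduction of Theorem \ref{onetwenty} and Corollary \ref{coplanar} to the clean vanishing-sum form of fact (ii).
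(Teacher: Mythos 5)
Your proof is correct. Part 1 is essentially the paper's argument: both realize the transposition of $\mathbf{e_i}$ and $\mathbf{e_j}$ as a composition of left--right child swaps inside $T'$, invoke the Uniqueness Theorem to conclude that the induced coordinate isometry maps $T$ to itself, and read off $s_i = s_j$ from the Steiner points that the corresponding tree automorphism fixes; your explicit appeal to the fact that an automorphism of a tree with all internal degrees at least $3$ is determined by its action on the leaves is a precision the paper leaves implicit. Part 2, however, takes a genuinely different route. The paper also inducts on the height of $T'$, but its key step is metric-combinatorial: every point of $\ell_{T'}$ has all its $P(T')$-coordinates equal (by the part 1 argument applied to the endpoints of $e_{T'}$), hence is equidistant from the two child centroids; combined with the coplanarity of the three edges at $\mathbf{r_{T'}}$ from Corollary \ref{coplanar}, this identifies $\ell_{T'}$ as the perpendicular bisector of the segment joining those centroids within that plane, so it passes through their midpoint. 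You instead use the equilibrium identity $\hat{\mathbf{u}}_L + \hat{\mathbf{u}}_R + \hat{\mathbf{u}}_p = \mathbf{0}$ at $\mathbf{r_{T'}}$ together with $\mu_L = \mu_R$, which you extract from the subtree-swap isometry. Both are sound and rest on the same symmetry machinery; the paper's version avoids the explicit $120\degree$ computation by exploiting the coordinate description of the line, while yours isolates exactly which metric inputs are needed (the vanishing resultant at the Steiner point and the equidistance of the two sub-centroids from the root) and makes transparent why the equal size of the two child subtrees is what forces the resultant to point at the average of the two centroids rather than some other convex combination.
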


The proof of this lemma uses permutations of the labels of the terminals, as informally sketched in Section \ref{subsec: conjectured topology}. We formalize that notion here using labeled full binary trees.

\begin{definition}[Labelling Full Binary Trees]
\label{ATB}
    Let $T$ be a full binary tree (with a fixed choice of left and right children for every non-leaf node). Then, \textit{labelling the tree with respect to (a binary string) $g$} is defined as follows. 
    \begin{itemize}
        \item The root of $T$ is labeled as $g$.
        \item Then, while there exists some labeled node $v$ with unlabeled children, label the left child of $v$ by appending $0$ to the label of $v$ and label the right child of $v$ by appending $1$ to the the label of $v$. E.g., if $v$ had binary string $b$ as its label, its children will be labeled $b0$ and $b1$.
    \end{itemize}
    We denote $T_g$ as the result of labelling $T$ with respect to $g$.
\end{definition}

    Let $T$ be a Steiner tree of the regular simplex and let $T_g'$ be a labeled induced full binary subtree with terminal leaf nodes (with an arbitrary choice of left and right children). Let $\mathbf{s}$ be a Steiner point in $T_g'$. Consider the subtree of $T_g'$ rooted at $\mathbf{s}$. Each node in the left subtree has its label prepended by $b_{\mathbf{s}}0$ and each node in the right subtree has its label prepended by $b_{\mathbf{s}}1$. Swapping the left and right children of $\mathbf{s}$ amounts to making the labels of the nodes in the left subtree prepended by $b_{\mathbf{s}}1$ and the labels of the nodes in the right subtree instead prepended by $b_{\mathbf{s}}0$ (via Definition \ref{ATB}).

\begin{observation}
\label{labels}
      Let $T$ be a Steiner tree of the regular simplex and let $T_g'$ be a labeled induced full binary subtree with terminal leaf nodes. Let $\mathbf{s}$ be a Steiner point in $T_g'$. Swapping the left and right children of $\mathbf{s}$ corresponds to a topology-preserving coordinate permutation in $T$.
\end{observation}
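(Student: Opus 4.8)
The plan is to realize the child-swap as a genuine automorphism of the abstract tree underlying $T$ and then read off the coordinate permutation it induces on the terminals, exploiting that coordinate permutations are isometries of $\R^n$ which permute the vertices of the regular simplex among themselves. Concretely, a permutation $\sigma \in S_n$ acts on $\R^n$ by permuting coordinates; this is an orthogonal map with $\sigma(\mathbf{e_i}) = \mathbf{e_{\sigma(i)}}$, so it preserves all pairwise distances and carries the terminal set $\{\mathbf{e_1}, \dots, \mathbf{e_n}\}$ onto itself. Consequently $\sigma(T)$ is again a Steiner tree of the same regular simplex, congruent to $T$ but with the terminal formerly labeled $\mathbf{e_i}$ now labeled $\mathbf{e_{\sigma(i)}}$. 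Hence $\sigma$ is topology-preserving precisely when relabeling the terminals of $T$ by $\sigma$ leaves the labeled topology unchanged, i.e.\ when there is a graph automorphism $\phi$ of the underlying tree of $T$ (terminals labeled, Steiner points unlabeled) with $\phi(\mathbf{e_i}) = \mathbf{e_{\sigma(i)}}$ for all $i$. So it suffices to exhibit such a $\phi$ coming from the swap at $\mathbf{s}$ and to identify the induced $\sigma$.

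Next I would build $\phi$ explicitly. Writing $b_{\mathbf{s}}$ for the label of $\mathbf{s}$, the subtree of $T_g'$ rooted at $\mathbf{s}$ is a full binary tree whose two child subtrees $L$ (labels $b_{\mathbf{s}}0w$) and $R$ (labels $b_{\mathbf{s}}1w$) are isomorphic, and Definition \ref{ATB} supplies a canonical node-bijection matching the node of $L$ labeled $b_{\mathbf{s}}0w$ to the node of $R$ labeled $b_{\mathbf{s}}1w$. I define $\phi$ to be this suffix-matching bijection on the subtree rooted at $\mathbf{s}$ (interchanging $L$ and $R$, fixing $\mathbf{s}$) and the identity on every vertex not below $\mathbf{s}$. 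I would then verify $\phi$ is an automorphism of the underlying tree: the two edges from $\mathbf{s}$ to the roots of $L$ and $R$ are interchanged; an edge of $L$ joining labels $b_{\mathbf{s}}0w, b_{\mathbf{s}}0w'$ maps to the edge of $R$ joining $b_{\mathbf{s}}1w, b_{\mathbf{s}}1w'$ and symmetrically; the edge $e_{\mathbf{s}}$ from $\mathbf{s}$ to its parent is fixed since $\phi(\mathbf{s}) = \mathbf{s}$; and every edge disjoint from the subtree below $\mathbf{s}$ is fixed. So $\phi$ preserves adjacency and sends terminals to terminals.

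Finally, reading $\phi$ on the leaves yields the permutation: put $\sigma(i) = j$ whenever $\mathbf{e_i}$ is labeled $b_{\mathbf{s}}0w$ and $\mathbf{e_j}$ is labeled $b_{\mathbf{s}}1w$, and let $\sigma$ fix every coordinate whose terminal does not lie below $\mathbf{s}$. This is exactly the relabeling that flips the bit immediately after $b_{\mathbf{s}}$, which is the description of the child-swap given just before the observation, so $\sigma$ encodes the swap while $\phi$ realizes $\mathbf{e_i} \mapsto \mathbf{e_{\sigma(i)}}$; by the characterization above, $\sigma$ is a topology-preserving coordinate permutation. I expect the only genuinely delicate point to be the well-definedness of $\sigma$ as a bijection of the terminal set — equivalently, that the suffix-matching $b_{\mathbf{s}}0w \leftrightarrow b_{\mathbf{s}}1w$ carries leaf labels to leaf labels. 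This is guaranteed by $\mathbf{s}$ heading a full binary subtree, whose two child subtrees are full binary trees of equal height and hence isomorphic; I would state this isomorphism explicitly, since it is what makes $\sigma$ and $\phi$ the same manifestly invertible map.
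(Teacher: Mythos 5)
Your proposal is correct and follows essentially the same route as the paper: the permutation is the product of the transpositions pairing terminals labeled $b_{\mathbf{s}}0w$ and $b_{\mathbf{s}}1w$, and topology preservation follows because the two child subtrees of $\mathbf{s}$ are isomorphic full binary trees, so matched terminals sit in corresponding positions. You simply make explicit the tree automorphism and the edge-by-edge verification that the paper leaves implicit.
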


\begin{proof}
    The coordinate permutation is precisely the composition of the involutions of each pair of terminals whose labels are $b_{\mathbf{s}}0c$ and $b_{\mathbf{s}}1c$. Namely, their labels differ exactly in the bit after the label of $\mathbf{s}$. 
    
    Since each pair of swapped terminals is between terminals in the corresponding position in the other subtree of $\mathbf{s}$ (and the subtrees have the same overall structure since $T_g'$ is an induced full binary subtree with terminal leaf nodes), this permutation of the terminals preserves the topology of $T$.
\end{proof}

Finally, we can return to Lemma \ref{symmetries}.

\begin{proof}
    First, label $T'$ with respect to the empty string $\varepsilon$ to obtain $T'_{\varepsilon}$. We claim that for every pair of terminals $\mathbf{e_i}, \mathbf{e_j} \in T'$ there is a topology-preserving coordinate permutation of $T$ that swaps $\mathbf{e_i}$ and $\mathbf{e_j}$. Namely, it is the composition of topology-preserving coordinate permutations induced by swapping the left and right children of Steiner points in $T'_{\varepsilon}$ (via Observation \ref{labels}). Index the bits of the labels of the nodes from last added to first (via Definition \ref{ATB}). Suppose that the labels of $\mathbf{e_i}$ and $\mathbf{e_j}$ differ on bits $a_1, a_2, \ldots, a_{r}$. Swapping the left and right children of each of the $a_1$\textsuperscript{th}, $a_2$\textsuperscript{th}, \ldots, $a_{r}$\textsuperscript{th} ancestors of $\mathbf{e_i}$ and $\mathbf{e_j}$ (where the first ancestor is the parent of $\mathbf{e_i}$, the second ancestor is the grandparent, etc., and we only swap the children of a node at most once) swaps the labels of $\mathbf{e_i}$ and $\mathbf{e_j}$ (for example see Figure \ref{fig: swapping children}). The composition of the corresponding topology-preserving coordinate permutations is a topology-preserving coordinate permutation swapping $\mathbf{e_i}$ and $\mathbf{e_j}$. But, by Theorem \ref{unique}, $T$ must be fixed (up to isomorphism) under this map.
 
    \input{Swapping_Children}
    
    The topology of $(T \setminus T') \cup \{ \mathbf{r_{T'}}\}$ is fixed under this topology-preserving coordinate permutation and each terminal is fixed. Hence, the Steiner points in this part of the tree must be fixed (or else we would violate Theorem \ref{unique}). Hence, for each such Steiner point $\mathbf{s}$, we must have $s_i = s_j$. This holds for all $\mathbf{e_i}, \mathbf{e_j} \in T'$, yielding the first part of the result.

     Observe that $e_{T'}$ is an edge between two Steiner points such that the first property holds. Hence, for every point on the extending line $\ell_{T'}$, $s_i = s_j$ for all pairs of terminals $\mathbf{e_i}, \mathbf{e_j} \in T'$. Using this observation, we prove the second part of the result by induction on the number of levels in the induced full binary subtree $T'$. When there is only one level, $T'$ is a Steiner point $\mathbf{r_{T'}}$ connected to two terminals, $\mathbf{e_i}$ and $\mathbf{e_j}$. By Corollary \ref{coplanar}, the edge $e_{T'}$ and the edges from $\mathbf{r_{T'}}$ to $\mathbf{e_i}$ and $\mathbf{e_j}$ are all coplanar. Consider $\triangle \mathbf{r_{T'}}\mathbf{e_i}\mathbf{e_j}$. By coplanarity, $\ell_{T'}$ lies in the same plane as this triangle. By the observation about the extending line above, $\ell_{T'}$ must be coincident with the perpendicular bisector of the line segment joining $\mathbf{e_i}$ and $\mathbf{e_j}$. Hence, $\ell_{T'}$ passes through the midpoint of this segment, the centroid of $\mathbf{e_i}$ and $\mathbf{e_j}$.

     The inductive step is similar: suppose the result holds for full binary subtrees with at least $r$ levels. Then, consider the edges other than $e_{T'}$ incident to the root $\mathbf{r_{T'}}$ of $T'$. Namely, with $T_L$ the left subtree of $\mathbf{r_{T'}}$ and $T_R$ similarly defined, these edges are precisely $e_{T_L}$ and $e_{T_R}$. By the inductive hypothesis, $\ell_{T_L}$ and $\ell_{T_R}$ pass through $\mathbf{c_{T_R}}$ and $\mathbf{c_{T_L}}$, the centroids of $P(T_R)$ and $P(T_L)$, respectively. Now, consider $\triangle \mathbf{r_{T'}} \mathbf{c_{T_R}} \mathbf{c_{T_L}}$. Again, coplanarity and the observation about the extending line imply that $\ell_{T'}$ is coincident to the perpindicular bisector of the line segment joining $\mathbf{c_{T_R}}$ and  $\mathbf{c_{T_L}}$. Hence, it passes through the midpoint of $\mathbf{c_{T_R}}$ and $\mathbf{c_{T_L}}$, the centroid of $P(T')$, yielding the desired result.    
\end{proof}

We need to say that the terminals of an optimal Steiner tree for the $n$-simplex have to be its leaf nodes. Luckily, this follows from a previous structural lemma.

\begin{corollary}
\label{leaves}
In any relatively minimal Steiner tree of the regular $n$-simplex, all terminals are leaf nodes.
\end{corollary}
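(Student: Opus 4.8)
The plan is to derive this as an immediate specialization of Lemma~\ref{lem: max pt prop} to the standard realization of the simplex. Recall that we represent the regular $n$-simplex by its vertices $P = \{\mathbf{e_1}, \ldots, \mathbf{e_n}\} \subseteq \R^n$, the standard basis vectors, so that the ambient dimension equals the number of terminals.

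First I would check that each terminal $\mathbf{e_i}$ satisfies the coordinatewise extremality hypothesis of Lemma~\ref{lem: max pt prop}. In coordinate $i$ we have $(\mathbf{e_i})_i = 1$, whereas every other vertex $\mathbf{e_j}$ with $j \neq i$ has $(\mathbf{e_j})_i = 0$; hence $(\mathbf{e_i})_i = \max\{q_i : \mathbf{q} \in P\}$, so $\mathbf{e_i}$ is maximal in coordinate $i$. In any other coordinate $j \neq i$ we have $(\mathbf{e_i})_j = 0$, and since $\mathbf{e_i}$ itself (indeed every vertex but $\mathbf{e_j}$) takes value $0$ there, $0 = \min\{q_j : \mathbf{q} \in P\}$, so $\mathbf{e_i}$ is minimal in coordinate $j$. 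Thus every coordinate of $\mathbf{e_i}$ is either the coordinatewise maximum or the coordinatewise minimum over $P$, which is exactly the hypothesis of Lemma~\ref{lem: max pt prop}; the lemma then forces $\mathbf{e_i}$ to be a leaf node. Applying this to each $i \in [n]$ shows that all terminals are leaves.

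The only point needing care is that Lemma~\ref{lem: max pt prop} is phrased for optimal Steiner trees while the corollary concerns relatively minimal trees; however, its proof invokes only Theorem~\ref{convexhull} and the $120^\circ$ property of Theorem~\ref{onetwenty}, both of which hold for any relatively minimal tree, so the conclusion transfers without change. Consequently there is no genuine obstacle here: beyond the elementary coordinate bookkeeping above, the corollary is a direct consequence of the earlier lemma applied to the basis-vector representation of the simplex.
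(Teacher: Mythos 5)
Your proof is correct and matches the paper's: the paper likewise derives the corollary by applying Lemma~\ref{lem: max pt prop} to the basis-vector realization $\mathbf{p_i} = \mathbf{e_i}$ of the simplex. Your extra remark about optimal versus relatively minimal trees is sound — the paper's own proof of Lemma~\ref{lem: max pt prop} in fact begins with a relatively minimal tree, so the transfer is already implicit there.
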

\begin{proof}
Consider the regular $n$-simplex with terminals $\mathbf{p_i} = \mathbf{e_i}$. The result then follows from Lemma \ref{lem: max pt prop}.
\end{proof}

Next, we will restrict the intersection of the extending lines of edges incident to Steiner points.
\begin{lemma}
\label{intersectingfaces}
    Let $F_i = \{\mathbf{x} \in \mathbb{R}^{n}: x_{i} = 0, \norm{\mathbf{x}}_1 = 1\}$ be a face of the regular $n$-simplex. Let $T$ be a Steiner tree of the regular $n$-simplex. Let $\mathbf{s} \in V(T)$ be a Steiner point. Let $\mathbf{a}, \mathbf{b},$ and $\mathbf{c}$ be the points of intersection between the convex hull and the rays extending the edges incident to $\mathbf{s}$, with the endpoint of the first ray the neighbor of $\mathbf{s}$ and the endpoints of the other rays $\mathbf{s}$. If $\mathbf{a} \not\in F_i$,  then at least one of $\mathbf{b}$ and $\mathbf{c}$ are also not contained in $F_i$.
\end{lemma}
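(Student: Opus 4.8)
The plan is to reduce this geometric claim to an elementary fact about the index minimizing a sum of two rescaled vectors. First I would fix the local picture at $\mathbf{s}$. By Corollary \ref{coplanar}, $\mathbf{s}$ has exactly three incident edges, they are coplanar, and by Theorem \ref{onetwenty} they meet pairwise at $120\degree$; writing $\mathbf{u}_1, \mathbf{u}_2, \mathbf{u}_3$ for the unit vectors from $\mathbf{s}$ along these edges, coplanarity together with the three $120\degree$ angles forces $\mathbf{u}_1 + \mathbf{u}_2 + \mathbf{u}_3 = \mathbf{0}$. I would relabel so that the ``first'' edge (the one whose extending ray, emanating from the neighbour of $\mathbf{s}$ and passing through $\mathbf{s}$, produces $\mathbf{a}$) is $\mathbf{u}_1$; then $\mathbf{a}$ lies on the ray from $\mathbf{s}$ in direction $-\mathbf{u}_1 = \mathbf{u}_2 + \mathbf{u}_3$, while $\mathbf{b}$ and $\mathbf{c}$ lie on the rays in directions $\mathbf{u}_2$ and $\mathbf{u}_3$. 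Everything lives in the hyperplane $\{\mathbf{x} : \sum_j x_j = 1\}$, the hull is $\{\mathbf{x} : \sum_j x_j = 1,\ x_j \geq 0 \ \forall j\}$, each face is $F_i = \{x_i = 0\} \cap (\text{hull})$, and every $\mathbf{u}_k$ satisfies $\sum_j (\mathbf{u}_k)_j = 0$.

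Next I would characterize which face a ray exits through. Since the tree is relatively minimal, Lemma \ref{lem: strict coordinate bounds} (with the simplex's coordinate bounds $0$ and $1$) gives $s_j > 0$ for every coordinate $j$, so I may rescale coordinatewise. For a direction $\mathbf{d}$ with $\sum_j d_j = 0$, the point $\mathbf{s} + t\mathbf{d}$ leaves the hull at $t^\ast = \min_{j:\, d_j < 0} s_j/(-d_j)$, and the exit point lies in $F_i$ exactly when $i$ attains this minimum; equivalently, the exit lies in $F_i$ iff $i \in \argmin_j\, d_j/s_j$ (indices with $d_j \geq 0$ are irrelevant, as then $d_j/s_j \geq 0$ while some coordinate is negative). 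Writing $p_j = (\mathbf{u}_2)_j/s_j$ and $q_j = (\mathbf{u}_3)_j/s_j$, this reads $\mathbf{b}\in F_i \iff i \in \argmin_j p_j$, and $\mathbf{c}\in F_i \iff i \in \argmin_j q_j$, and, using $-\mathbf{u}_1 = \mathbf{u}_2 + \mathbf{u}_3$ together with the linearity of the rescaling, $\mathbf{a}\in F_i \iff i \in \argmin_j (p_j + q_j)$.

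Finally I would conclude by contraposition. If both $\mathbf{b}\in F_i$ and $\mathbf{c}\in F_i$, then $i$ minimizes $p$ and $i$ minimizes $q$, whence $p_i + q_i \leq p_j + q_j$ for all $j$, so $i$ minimizes $p+q$ and $\mathbf{a}\in F_i$. Contrapositively, $\mathbf{a}\notin F_i$ forces $\mathbf{b}\notin F_i$ or $\mathbf{c}\notin F_i$, which is the claim.

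I expect the only real friction to be bookkeeping rather than ideas. The two delicate points are getting the extending-ray directions right (in particular the identity $-\mathbf{u}_1 = \mathbf{u}_2 + \mathbf{u}_3$, which is exactly where coplanarity and the $120\degree$ property enter), and stating the exit-face characterization carefully enough to accommodate ties, i.e.\ the fact that $F_i$ is a \emph{closed} face and that the exit may land on a lower-dimensional face where several coordinates vanish simultaneously; phrasing membership as ``$i$ is \emph{a} minimizer'' rather than ``\emph{the} minimizer'' keeps the final argmin inclusion valid. The strict positivity $s_j > 0$ is what legitimizes the coordinatewise rescaling, so I would flag its use (and the invocation of Corollary \ref{coplanar}, which requires $T$ to be relatively minimal) explicitly.
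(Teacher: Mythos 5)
Your proof is correct, but it takes a genuinely different route from the paper's. The paper argues synthetically: assuming $\mathbf{b},\mathbf{c}\in F_i$, it uses coplanarity and the $60\degree$ angles between $\overrightarrow{\mathbf{s}\mathbf{a}}$ and the other two rays to produce a point $\mathbf{x}$ lying on both the segment $\mathbf{b}\mathbf{c}$ and the segment $\mathbf{s}\mathbf{a}$, and then derives the contradiction $x_i=0$ versus $x_i>0$ from $b_i=c_i=0$ and $s_i,a_i>0$. You instead work in coordinates: you encode ``which face does a ray from $\mathbf{s}$ in direction $\mathbf{d}$ (with $\sum_j d_j=0$) exit through'' as $i\in\argmin_j d_j/s_j$, use $-\mathbf{u}_1=\mathbf{u}_2+\mathbf{u}_3$ to express the exit criterion for $\mathbf{a}$ as $\argmin_j(p_j+q_j)$, and finish with the elementary fact that a common minimizer of $p$ and $q$ minimizes $p+q$. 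Both arguments rest on the same inputs (degree three, coplanarity, $120\degree$ angles, and strict positivity of the coordinates of $\mathbf{s}$ via Lemma \ref{lem: strict coordinate bounds}), and both implicitly require $T$ to be optimal rather than an arbitrary Steiner tree, as you correctly flag. Your version handles the degenerate ``tie'' case (exit through a lower-dimensional face) completely transparently, and in fact only needs $-\mathbf{u}_1$ to be a strictly positive combination of $\mathbf{u}_2$ and $\mathbf{u}_3$ rather than exactly their sum; the paper's version is shorter and avoids computing ray directions altogether. One bookkeeping point worth stating explicitly if you write this up: the first ray is defined with endpoint at the neighbor of $\mathbf{s}$, so when that neighbor is a terminal the ray meets the boundary twice, and $\mathbf{a}$ must be taken to be the far intersection (the exit beyond $\mathbf{s}$), which is what your parametrization computes.
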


\begin{proof}
     For the sake of contradiction, suppose that both $\mathbf{b}, \mathbf{c} \in F_i$. Let $\mathbf{a} \in F_j$ for $i \neq j$.  By Corollary \ref{coplanar} $\mathbf{a}, \mathbf{b}, \mathbf{c}, \mathbf{s}$ are coplanar. Hence, the lines $\overleftrightarrow{\mathbf{b}\mathbf{c}}$ and $\overleftrightarrow{\mathbf{s}\mathbf{a}}$ intersect at some point $\mathbf{x}$. Note that $\mathbf{s}\mathbf{a}$ includes an angle of exactly $60\degree$ with each of $\mathbf{s}\mathbf{b}$ and $\mathbf{s}\mathbf{c}$ (by Theorem \ref{onetwenty}). So, in particular, coplanarity implies that $\mathbf{b}\mathbf{c}$ intersects the ray $\overrightarrow{\mathbf{s}\mathbf{a}}$. Finally, all the points on $\mathbf{b}\mathbf{c}$ are contained in the convex hull of the simplex, so $\mathbf{x}$ must be in the convex hull as well. By Theorem \ref{convexhull}, $\mathbf{s}$ is in the convex hull of the simplex, so all the points in the convex hull on the ray $\overrightarrow{\mathbf{s}\mathbf{a}}$ are contained in the segment $\mathbf{s}\mathbf{a}$. Namely, $\mathbf{x}$ must be contained in the segments $\mathbf{b}\mathbf{c}$ and $\mathbf{s}\mathbf{a}$ (see Figure \ref{fig:intersecting faces}).
     
     Now, since $b_i = c_i = 0$, we have $x_i = 0$. But, $s_i, a_i > 0$ (using Lemma \ref{lem: strict coordinate bounds}), so we also have $x_i > 0$, yielding the desired contradiction.
\end{proof}

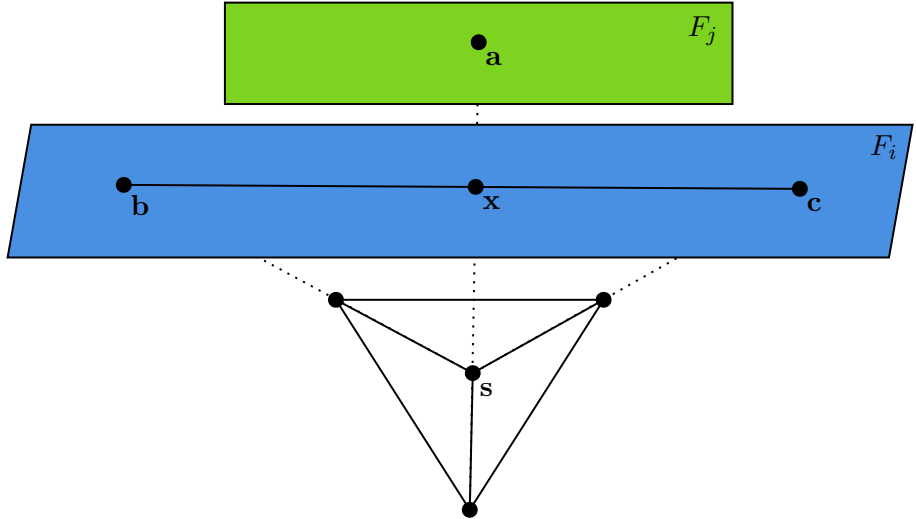
\begin{figure}[h]
    \centering

\tikzset{every picture/.style={line width=0.75pt}} 

\begin{tikzpicture}[x=0.75pt,y=0.75pt,yscale=-1,xscale=1]

\draw  [draw opacity=0] (329.5,257) -- (262,151) -- (397,151) -- cycle ;
\draw    (331,188) -- (329.5,257) ;
\draw    (262,151) -- (331,188) ;
\draw    (397,151) -- (331,188) ;
\draw  [dash pattern={on 0.84pt off 2.51pt}]  (496,95) -- (331,188) ;
\draw  [dash pattern={on 0.84pt off 2.51pt}]  (334,21) -- (329.5,257) ;
\draw  [dash pattern={on 0.84pt off 2.51pt}]  (155,93) -- (331,188) ;
\draw  [draw opacity=0][fill={rgb, 255:red, 126; green, 211; blue, 33 }  ,fill opacity=0.63 ] (206,1) -- (462,1) -- (462,52.2) -- (206,52.2) -- cycle ;
\draw  [fill={rgb, 255:red, 0; green, 0; blue, 0 }  ,fill opacity=1 ] (331.22,18.86) .. controls (332.41,17.32) and (334.61,17.04) .. (336.15,18.22) .. controls (337.68,19.4) and (337.96,21.61) .. (336.78,23.14) .. controls (335.59,24.68) and (333.39,24.96) .. (331.85,23.78) .. controls (330.32,22.6) and (330.04,20.39) .. (331.22,18.86) -- cycle ;
\draw  [fill={rgb, 255:red, 0; green, 0; blue, 0 }  ,fill opacity=1 ] (328.22,185.86) .. controls (329.41,184.32) and (331.61,184.04) .. (333.15,185.22) .. controls (334.68,186.4) and (334.96,188.61) .. (333.78,190.14) .. controls (332.59,191.68) and (330.39,191.96) .. (328.85,190.78) .. controls (327.32,189.6) and (327.04,187.39) .. (328.22,185.86) -- cycle ;
\draw  [fill={rgb, 255:red, 0; green, 0; blue, 0 }  ,fill opacity=1 ] (394.22,148.86) .. controls (395.41,147.32) and (397.61,147.04) .. (399.15,148.22) .. controls (400.68,149.4) and (400.96,151.61) .. (399.78,153.14) .. controls (398.59,154.68) and (396.39,154.96) .. (394.85,153.78) .. controls (393.32,152.6) and (393.04,150.39) .. (394.22,148.86) -- cycle ;
\draw  [fill={rgb, 255:red, 0; green, 0; blue, 0 }  ,fill opacity=1 ] (259.22,148.86) .. controls (260.41,147.32) and (262.61,147.04) .. (264.15,148.22) .. controls (265.68,149.4) and (265.96,151.61) .. (264.78,153.14) .. controls (263.59,154.68) and (261.39,154.96) .. (259.85,153.78) .. controls (258.32,152.6) and (258.04,150.39) .. (259.22,148.86) -- cycle ;
\draw  [fill={rgb, 255:red, 0; green, 0; blue, 0 }  ,fill opacity=1 ] (326.72,254.86) .. controls (327.91,253.32) and (330.11,253.04) .. (331.65,254.22) .. controls (333.18,255.4) and (333.46,257.61) .. (332.28,259.14) .. controls (331.09,260.68) and (328.89,260.96) .. (327.35,259.78) .. controls (325.82,258.6) and (325.54,256.39) .. (326.72,254.86) -- cycle ;
\draw  [draw opacity=0][fill={rgb, 255:red, 74; green, 144; blue, 226 }  ,fill opacity=0.64 ] (108.3,62.64) -- (552.8,62.64) -- (540.92,129.64) -- (96.42,129.64) -- cycle ;
\draw  [fill={rgb, 255:red, 0; green, 0; blue, 0 }  ,fill opacity=1 ] (493.22,92.86) .. controls (494.41,91.32) and (496.61,91.04) .. (498.15,92.22) .. controls (499.68,93.4) and (499.96,95.61) .. (498.78,97.14) .. controls (497.59,98.68) and (495.39,98.96) .. (493.85,97.78) .. controls (492.32,96.6) and (492.04,94.39) .. (493.22,92.86) -- cycle ;
\draw  [fill={rgb, 255:red, 0; green, 0; blue, 0 }  ,fill opacity=1 ] (152.22,90.86) .. controls (153.41,89.32) and (155.61,89.04) .. (157.15,90.22) .. controls (158.68,91.4) and (158.96,93.61) .. (157.78,95.14) .. controls (156.59,96.68) and (154.39,96.96) .. (152.85,95.78) .. controls (151.32,94.6) and (151.04,92.39) .. (152.22,90.86) -- cycle ;
\draw    (155,93) -- (496,95) ;
\draw  [fill={rgb, 255:red, 0; green, 0; blue, 0 }  ,fill opacity=1 ] (329.72,91.86) .. controls (330.91,90.32) and (333.11,90.04) .. (334.65,91.22) .. controls (336.18,92.4) and (336.46,94.61) .. (335.28,96.14) .. controls (334.09,97.68) and (331.89,97.96) .. (330.35,96.78) .. controls (328.82,95.6) and (328.54,93.39) .. (329.72,91.86) -- cycle ;

\draw (530,66.4) node [anchor=north west][inner sep=0.75pt]    {$F_{i}$};
\draw (438,6.4) node [anchor=north west][inner sep=0.75pt]    {$F_{j}$};
\draw (333,191.4) node [anchor=north west][inner sep=0.75pt]    {$\mathbf{s}$};
\draw (334.5,97.4) node [anchor=north west][inner sep=0.75pt]    {$\mathbf{x}$};
\draw (336,24.4) node [anchor=north west][inner sep=0.75pt]    {$\mathbf{a}$};
\draw (498,98.4) node [anchor=north west][inner sep=0.75pt]    {$\mathbf{c}$};
\draw (157,96.4) node [anchor=north west][inner sep=0.75pt]    {$\mathbf{b}$};

\end{tikzpicture}

    \caption{The intersection of the line segments $\mathbf{bc}$ and $\mathbf{sa}$ is the point $\mathbf{x}$.}
    \label{fig:intersecting faces}
\end{figure}

\begin{corollary} \label{cor: finite path}
Let $T$ be an optimal Steiner tree of the regular simplex. Let $\mathbf{s}$ be a Steiner point adjacent to a terminal $\mathbf{e_i}$ in $T$. Then, the rays extending the other edges of $\mathbf{s}$, both with endpoints $\mathbf{s}$, intersect the convex hull of the regular simplex at the face $F_i = \{\mathbf{x} \in \mathbb{R}^{n}: x_{i} = 0, \norm{\mathbf{x}}_1 = 1\}$.
\end{corollary}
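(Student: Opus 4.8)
The plan is to avoid a direct angle-chase and instead localize each exit point through the subtree structure of $T$, using Lemma \ref{symmetries} together with Corollary \ref{leaves}; Lemma \ref{intersectingfaces} then enters only as a consistency check. Fix one of the two edges of $\mathbf{s}$ not incident to $\mathbf{e_i}$, say $(\mathbf{s},\mathbf{n})$, and split on the type of $\mathbf{n}$. If $\mathbf{n}$ is a terminal, then $\mathbf{n}=\mathbf{e_j}$ with $j\neq i$; the ray extending $(\mathbf{s},\mathbf{n})$ meets the hull at $\mathbf{e_j}$, and since $(\mathbf{e_j})_i=0$ we have $\mathbf{e_j}\in F_i$, as required.

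The substantive case is when $\mathbf{n}$ is a Steiner point. By Corollary \ref{leaves} every terminal of $T$ is a leaf, so $T$ is full, and the component $T_{\mathbf{n}}$ obtained by deleting $(\mathbf{s},\mathbf{n})$ and keeping the side of $\mathbf{n}$ is an induced full binary subtree whose leaves are all terminals, with root $\mathbf{n}$ and root-edge $e_{T_{\mathbf{n}}}=(\mathbf{s},\mathbf{n})$. Because $\mathbf{e_i}$ is adjacent to $\mathbf{s}$, it lies on the $\mathbf{s}$-side of this cut, so $P(T_{\mathbf{n}})\subseteq\{\mathbf{e_k}:k\neq i\}$. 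Hence the centroid $\mathbf{c}$ of $P(T_{\mathbf{n}})$ satisfies $c_i=0$, i.e.\ $\mathbf{c}\in F_i$, and being a convex combination of vertices $\mathbf{e_k}$ with $k\neq i$ it lies on the boundary of the hull. Now apply Lemma \ref{symmetries}(2): the extending line $\ell_{T_{\mathbf{n}}}$ of $(\mathbf{s},\mathbf{n})$ passes through $\mathbf{c}$. Thus the line through $\mathbf{s}$ and $\mathbf{n}$ meets $F_i$ at $\mathbf{c}$, and the same argument applied to the other non-$\mathbf{e_i}$ edge handles the second ray.

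It remains to check that $\mathbf{c}$ is where the \emph{ray} oriented from $\mathbf{s}$ through $\mathbf{n}$ first meets the boundary, which is the part I expect to require the most care since Lemma \ref{symmetries}(2) only locates $\mathbf{c}$ on the extending \emph{line}. Once $\mathbf{c}$ is known to lie forward of $\mathbf{s}$ (on the same side as $\mathbf{n}$), strict interiority of $\mathbf{s}$ and $\mathbf{n}$ from Lemma \ref{lem: strict coordinate bounds} and convexity of the hull force the whole open segment from $\mathbf{s}$ to the boundary point $\mathbf{c}$ into the interior, so the exit is exactly $\mathbf{c}\in F_i$. To secure the ordering I would argue that $T_{\mathbf{n}}$, and therefore $\mathbf{c}$, lies on the far side of $\mathbf{n}$ from $\mathbf{s}$: the two edges of $\mathbf{n}$ pointing into $T_{\mathbf{n}}$ each make an angle of at least $120\degree$ with $(\mathbf{n},\mathbf{s})$ by Theorem \ref{onetwenty}, so they have strictly positive component along $\mathbf{s}\to\mathbf{n}$, and an inductive (convex-hull) argument places the whole subtree, hence its centroid, in that forward half-space. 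As an independent sanity check one can note that the ray extending the edge $(\mathbf{s},\mathbf{e_i})$ backward through $\mathbf{s}$ lands in the relative interior of $F_i$ (only coordinate $i$ decreases along it), so it avoids every $F_j$ with $j\neq i$; Lemma \ref{intersectingfaces}, with this edge taken as the first ray, then forbids the two extended edges from both meeting a common lateral facet, consistently with both meeting $F_i$.
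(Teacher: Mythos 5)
Your proposal has a genuine gap at its central step: you apply Lemma \ref{symmetries}(2) to the component $T_{\mathbf{n}}$ hanging off $\mathbf{n}$ after deleting the edge $(\mathbf{s},\mathbf{n})$, but that lemma's hypothesis is not satisfied by an arbitrary such component. In this paper an ``induced full binary subtree with all leaf nodes terminals'' is the perfectly balanced object needed for the topology-preserving coordinate permutations of Observation \ref{labels}: the two subtrees of every internal node must be isomorphic (equivalently, all leaves at the same depth), since the permutation is built by swapping structurally identical left and right subtrees, and the inductive step of Lemma \ref{symmetries}(2) concludes that the midpoint of the two sub-centroids is the centroid of $P(T')$ --- which is only true when $|P(T_L)|=|P(T_R)|$. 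For a general optimal Steiner tree of the regular simplex (whose topology is exactly what is unknown), the subtree beyond $\mathbf{n}$ is a full binary tree only in the weak sense that every internal node has two children; it need not be balanced, no topology-preserving permutation swapping its terminals need exist, and the extending line of $(\mathbf{s},\mathbf{n})$ need not pass through the centroid of its terminals. So the point $\mathbf{c}$ on which your whole argument rests is not available. (Your remaining steps --- that a boundary point forward of the interior point $\mathbf{s}$ is the exit point of the ray, and the half-space argument for the ordering --- are reasonable, but they are downstream of the unjustified centroid claim.)

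The paper's proof goes a different way, and it is essentially the route you relegate to a ``sanity check'': it assumes for contradiction that one of the two rays from $\mathbf{s}$ exits the hull off $F_i$, and then uses Lemma \ref{intersectingfaces} not once but iteratively, propagating the ``exits off $F_i$'' property along a path of Steiner points $\mathbf{s}=\mathbf{s_0},\mathbf{s_1},\mathbf{s_2},\dots$ (at each step, the backward ray through $\mathbf{s_k}$ misses $F_i$, so one of its two forward edges also extends to a point off $F_i$). Since the tree is finite and a path in a tree cannot revisit vertices, this walk must terminate at a terminal; a ray exiting at a vertex of the simplex off $F_i$ forces that terminal to be $\mathbf{e_i}$ itself, creating a cycle with the edge $(\mathbf{s},\mathbf{e_i})$ --- contradiction. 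If you want to salvage your write-up, replace the appeal to Lemma \ref{symmetries} with this iterated use of Lemma \ref{intersectingfaces}; the symmetry lemma is the wrong tool here precisely because the corollary must hold for trees whose subtree structure is unbalanced or unknown.
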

\begin{proof}
 It holds that in every full, optimal Steiner tree, there are exactly $n-2$ Steiner points. It will suffice to use Lemma \ref{intersectingfaces} until we run out of possible Steiner points. Let $\mathbf{a}$ be a point of intersection between a ray extending an edge of $\mathbf{s}$ (not to $\mathbf{e_i}$) with endpoint $\mathbf{s}$. For the sake of contradiction, assume that $\mathbf{a} \not\in F_i$.
    
Let $\mathbf{s_0}=\mathbf{s}$ and let us proceed by induction on $k$. Let $e_k=(\mathbf{s_{k-1}}, \, \mathbf{s_k})$ be the edge such that the ray with endpoint $\mathbf{s_{k-1}}$ extending the edge $e_k$ does not intersect $F_i$ (e.g., $e_1$ is the edge whose extending ray has endpoint $\mathbf{a}$). If $\mathbf{s_k}$ is a terminal, since the $i$\textsuperscript{th} coordinate is greater than $0$, $\mathbf{s_i} = \mathbf{e_i}$ and we found a cycle in our tree $T$, a contradiction. Otherwise, we iterate using Lemma \ref{intersectingfaces} over up to all $n-2$ Steiner points in a path. Eventually then $\mathbf{s_i}$ must be a terminal since there are finitely many Steiner points, yielding a contradiction.    
\end{proof}

Finally, we are prepared to prove the following theorem.

\begin{theorem}
\label{orphanlength}
    Let $n \in \mathbb{N}$ and let $T$ be an optimal Steiner tree for the regular $n$-simplex. Consider a terminal $\mathbf{p} \in T$ and let $\mathbf{s}$ be the Steiner point adjacent to $\mathbf{p}$. Then, $\norm{\mathbf{s}-\mathbf{p}} > \frac{1}{\sqrt{3}}$.
\end{theorem}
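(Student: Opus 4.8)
The plan is to pass to the plane of the three edges at $\mathbf{s}$, recognize $\mathbf{s}$ as a Fermat point, and then play two ``heights'' against each other. First I would take $\mathbf{p} = \mathbf{e_1}$ without loss of generality. By Corollary \ref{leaves}, $\mathbf{e_1}$ is a leaf, so it has a single neighbour, which—since the tree is full with all terminals as leaves—is a Steiner point $\mathbf{s}$. By Theorem \ref{onetwenty} and Corollary \ref{coplanar}, $\mathbf{s}$ has exactly three edges, lying in a common plane $\Pi$ and meeting pairwise at $120\degree$, one of them running to $\mathbf{e_1}$. By Corollary \ref{cor: finite path}, the rays extending the other two edges from $\mathbf{s}$ meet the facet $F_1 = \{\mathbf{x} : x_1 = 0,\ \norm{\mathbf{x}}_1 = 1\}$ at points $\mathbf{b}, \mathbf{c}$. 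Thus $\mathbf{s}$ is the Fermat--Torricelli point of $\triangle \mathbf{e_1}\mathbf{b}\mathbf{c}$, and all four points lie in $\Pi$.

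Next I would exploit the linear functional $x_1$ restricted to $\Pi$. Since $\mathbf{b}$ and $\mathbf{c}$ both have first coordinate $0$, the entire line $\overleftrightarrow{\mathbf{b}\mathbf{c}}$ is the zero level set of $x_1|_\Pi$; writing $w = \norm{\mathrm{proj}_\Pi \mathbf{e_1}}$ for the in-plane gradient norm of $x_1$, the perpendicular (in-plane) distance from a point at level $x_1 = t$ to $\overleftrightarrow{\mathbf{b}\mathbf{c}}$ is $t/w$. Hence $\mathbf{e_1}$ (at level $1$) sits at height $h := 1/w$ above $\overleftrightarrow{\mathbf{b}\mathbf{c}}$, while $\mathbf{s}$ (at level $s_1 > 0$, by Lemma \ref{lem: strict coordinate bounds}) sits at some height $y_s > 0$ on the same side. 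Because orthogonal projection does not increase norm, $w \le \norm{\mathbf{e_1}} = 1$, so $h \ge 1$.

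The crux is to bound $y_s$ from above. Since $\angle \mathbf{b}\mathbf{s}\mathbf{c} = 120\degree$, the point $\mathbf{s}$ lies on a circular arc over the chord $\mathbf{b}\mathbf{c}$; an inscribed-angle computation (the circumradius of any triangle on $\mathbf{b}\mathbf{c}$ with a $120\degree$ opposite angle is $\norm{\mathbf{b}-\mathbf{c}}/\sqrt{3}$) shows the greatest height of such an arc above its chord is $\norm{\mathbf{b}-\mathbf{c}}/(2\sqrt{3})$. As $\mathbf{b}, \mathbf{c} \in F_1$ lie in the convex hull of the simplex (diameter $\sqrt 2$), this gives $y_s \le \norm{\mathbf{b}-\mathbf{c}}/(2\sqrt 3) \le \sqrt 2/(2\sqrt 3) = 1/\sqrt 6$. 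Finally, since the distance between two points dominates the difference of their perpendicular distances to a line,
\[
\norm{\mathbf{s} - \mathbf{e_1}} \ge h - y_s \ge 1 - \tfrac{1}{\sqrt 6} > \tfrac{1}{\sqrt 3},
\]
the last inequality being equivalent to $\sqrt 6 > \sqrt 2 + 1$, i.e.\ $9 > 8$.

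I expect the main obstacle to be setting up the two heights correctly and checking they combine in the favourable direction: one must pass to the plane $\Pi$, keep track that it is the \emph{extending rays} (not the tree edges) that reach $F_1$, and observe that the $120\degree$ angle at $\mathbf{s}$ caps how high $\mathbf{s}$ can climb toward $\mathbf{e_1}$ while the facet-level reading of $x_1$ forces $\mathbf{e_1}$ to sit at height at least $1$. The quantitative margin is thin—$1 - 1/\sqrt 6 \approx 0.592$ only slightly exceeds $1/\sqrt 3 \approx 0.577$—so the diameter estimate $\norm{\mathbf{b}-\mathbf{c}} \le \sqrt 2$ and the bound $w \le 1$ must both be used essentially without waste, and I would double-check the degenerate possibility $\mathbf{b}=\mathbf{c}$ is excluded (the two rays leave $\mathbf{s}$ in distinct directions, so cannot meet $F_1$ at a common point).
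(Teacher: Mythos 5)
Your proof is correct, and while it shares the paper's setup, the quantitative core is genuinely different. Both arguments begin identically: take $\mathbf{p}=\mathbf{e_1}$, use coplanarity and the $120\degree$ condition at $\mathbf{s}$, and use Corollary \ref{cor: finite path} to place the exit points $\mathbf{b},\mathbf{c}$ of the two extending rays on the face $F_1$, so that $\mathbf{s}$ is the Fermat point of the coplanar triangle $\triangle\mathbf{e_1}\mathbf{b}\mathbf{c}$. The paper then proceeds algebraically: it shows $\norm{\mathbf{a}-\mathbf{b}}\le\norm{\mathbf{p}-\mathbf{b}}$ by expanding the coordinates on $F_1$, invokes the three-point Steiner ratio and Gilbert--Chung's explicit formula for the vertex-to-Fermat-point distance, and rearranges, with strictness coming from $\norm{\mathbf{p}-\mathbf{b}}>1$. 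You instead play two perpendicular distances to the chord line $\overleftrightarrow{\mathbf{b}\mathbf{c}}$ against each other: the level-set/projection argument gives $d(\mathbf{e_1},\overleftrightarrow{\mathbf{b}\mathbf{c}})=1/w\ge 1$, the inscribed-angle locus for a $120\degree$ angle over a chord of length at most $\sqrt{2}$ gives $d(\mathbf{s},\overleftrightarrow{\mathbf{b}\mathbf{c}})\le 1/\sqrt{6}$, and the $1$-Lipschitz property of distance-to-a-line yields $\norm{\mathbf{s}-\mathbf{e_1}}\ge 1-1/\sqrt{6}>1/\sqrt{3}$. Your route avoids the Gilbert--Chung formula and the Steiner ratio entirely, is checkable by elementary plane geometry, and in fact delivers a marginally stronger constant ($1-1/\sqrt{6}\approx 0.592$ versus $1/\sqrt{3}\approx 0.577$), with strictness built into the final numerical comparison rather than inherited from $\norm{\mathbf{p}-\mathbf{b}}>1$; the trade-off is that the paper's bound retains the form $\norm{\mathbf{p}-\mathbf{b}}/\sqrt{3}$, which records slightly more information about the specific triangle. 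Your handling of the edge cases (that $\mathbf{b}\ne\mathbf{c}$ because two distinct rays from an interior point cannot exit the hull at a common point, and that $w>0$ so the zero level set of $x_1$ on the plane is genuinely the line $\overleftrightarrow{\mathbf{b}\mathbf{c}}$) is sound.
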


\begin{proof}
     Without loss of generality, suppose $\mathbf{p} = \mathbf{e_1}$. By Corollary \ref{cor: strict containment in Convex Hull}, we may assume that $\mathbf{s}$ is strictly contained in the convex hull of the $n$-simplex. Now consider the two other edges incident to $\mathbf{s}$. The rays with endpoint at $\mathbf{s}$ extending these edges must intersect the convex hull of the $n$-simplex in some points $\mathbf{a}$, $\mathbf{b}$. Let $F_1 = \{\mathbf{x} \in \mathbb{R}^{n}: x_{1} = 0, \norm{\mathbf{x}}_1 = 1\}$. 
     
     By Corollary \ref{cor: finite path}, we know that  $\mathbf{a}$, $\mathbf{b}$ lie in $F_1$. We will show that  $\norm{\mathbf{s}-\mathbf{p}} > \frac{1}{\sqrt{3}}$.
 Since $\mathbf{a}, \mathbf{b} \in F_1$, it holds that 
 \begin{align*}
 \mathbf{a} &=\left(0, a_{2}, \ldots, a_{n}\right), \\
 \mathbf{b} & =\left(0, b_{2}, \ldots, b_{n}\right),
 \end{align*}
where $\sum_{i=2}^{n} a_{i} = \sum_{i=2}^{n} b_{i} = 1$.

    Now consider $\triangle \mathbf{p}\mathbf{a}\mathbf{b}$. It has a unique Fermat point and that must be $\mathbf{s}$. 
    We will use a calculation based on \cite{gilchung} to determine the lower bound for $\norm{\mathbf{s}-\mathbf{p}}$. Let $L_{\mathbf{s}}= \norm{\mathbf{s}-\mathbf{p}} + \norm{\mathbf{s}-\mathbf{a}} + \norm{\mathbf{s}-\mathbf{b}}$. Without loss of generality, suppose that $\norm{\mathbf{p}-\mathbf{a}} \geq \norm{\mathbf{p}-\mathbf{b}}$. We will now show that $\norm{\mathbf{a}-\mathbf{b}} \leq \norm{\mathbf{p}-\mathbf{b}}$. It holds that

    \begin{align*}
        \norm{\mathbf{a}-\mathbf{b}}^{2} &= \sum_{i=2}^{n} (a_{i}-b_{i})^{2}\\
                  &= \sum_{i=2}^{n} (a_{i}^{2} - 2a_{i}b_{i} + b_{i}^{2})\\
                  &= \norm{\mathbf{p}-\mathbf{b}}^{2} - 1 + \sum_{i=2}^{n} (a_{i}^{2} - 2a_{i}b_{i})\\
                  &\leq \norm{\mathbf{p}-\mathbf{b}}^{2},
    \end{align*}
    since $\sum_{i=2}^{n} a_{i}^{2} = \left(\sum_{i=2}^{n} a_{i}\right)^{2} - \sum_{i \neq j} 2a_{i}a_{j} = 1 - \sum_{i \neq j} 2a_{i}a_{j} \leq 1$. The third step uses that $\mathbf{p} = \mathbf{e_1}$ and $\mathbf{b} \in F_1$, so $\mathbf{p}$ and $\mathbf{b}$ are orthogonal.
    
    From the proof of the Gilbert-Pollak Steiner ratio conjecture for 3 points (\S10 of \cite{gilpol}), it is known that for $L=\norm{\mathbf{a}-\mathbf{b}}+\norm{\mathbf{p}-\mathbf{b}}$ it holds that
    \[
    L \geq L_\mathbf{s} \geq \frac{\sqrt{3}}{2} L,
    \]
    by considering $\mathbf{p}$, $\mathbf{a}$, $\mathbf{b}$ as a set of terminals for the Steiner Tree problem. Next, it is important to note that $\norm{\mathbf{p}-\mathbf{b}} > 1$ and $\mathbf{b} \in F_1$. Finally, we can use formula (18) of \cite{gilchung} to compute a lower bound for $\norm{\mathbf{s}-\mathbf{p}}$:

    \begin{align*}
        \norm{\mathbf{s}-\mathbf{p}} &= \frac{L_\mathbf{s}+\frac{\norm{\mathbf{p}-\mathbf{b}}^{2}+\norm{\mathbf{p}-\mathbf{a}}^{2}-2\norm{\mathbf{a}-\mathbf{b}}^{2}}{L_{\mathbf{s}}}}{3}\\
              &\geq \frac{\frac{\sqrt{3}}{2}L^{2} + \norm{\mathbf{p}-\mathbf{b}}^{2}+\norm{\mathbf{p}-\mathbf{a}}^{2}-2\norm{\mathbf{a}-\mathbf{b}}^{2}}{3L}.
    \end{align*}
    Then, 

    \begin{align*}
        \norm{\mathbf{s}-\mathbf{p}} &\geq \frac{\norm{\mathbf{a}-\mathbf{b}}^{2}\left(\frac{\sqrt{3}}{2}-2\right)+\norm{\mathbf{p}-\mathbf{b}}^{2}\left(\frac{\sqrt{3}}{2}+1\right)+\norm{\mathbf{p}-\mathbf{a}}^{2}+\sqrt{3}\norm{\mathbf{a}-\mathbf{b}}\norm{\mathbf{p}-\mathbf{b}}}{3\left(\norm{\mathbf{a}-\mathbf{b}}+\norm{\mathbf{p}-\mathbf{b}}\right)}\\
                   &= \frac{\norm{\mathbf{p}-\mathbf{a}}^{2}-\norm{\mathbf{a}-\mathbf{b}}^{2}+\left(\norm{\mathbf{a}-\mathbf{b}}+\norm{\mathbf{p}-\mathbf{b}}\right)\left(\norm{\mathbf{a}-\mathbf{b}}\left(\frac{\sqrt{3}}{2}-1\right)+\norm{\mathbf{p}-\mathbf{b}}\left(\frac{\sqrt{3}}{2}+1\right)\right)}{3\left(\norm{\mathbf{a}-\mathbf{b}}+\norm{\mathbf{p}-\mathbf{b}}\right)}\\
                   &= \underbrace{\frac{\norm{\mathbf{p}-\mathbf{a}}^{2}-\norm{\mathbf{a}-\mathbf{b}}^{2}}{3\left(\norm{\mathbf{a}-\mathbf{b}}+\norm{\mathbf{p}-\mathbf{b}}\right)}}_{\geq 0}+ \frac{1}{3} \cdot \left( \norm{\mathbf{a}-\mathbf{b}}\left(\frac{\sqrt{3}}{2}-1\right)+\norm{\mathbf{p}-\mathbf{b}}\left(\frac{\sqrt{3}}{2}+1\right) \right).
    \end{align*}
    
    Finally, since $\frac{\sqrt{3}}{2}-1 < 0$ and $\norm{\mathbf{a}-\mathbf{b}} \leq \norm{\mathbf{p}-\mathbf{b}}$, it holds that

    \begin{align*}
        \norm{\mathbf{a}-\mathbf{b}}\left(\frac{\sqrt{3}}{2}-1\right)+\norm{\mathbf{p}-\mathbf{b}}\left(\frac{\sqrt{3}}{2}+1\right) &\geq \norm{\mathbf{p}-\mathbf{b}}\left(\frac{\sqrt{3}}{2}-1+\frac{\sqrt{3}}{2}+1\right)\\
                              &> \sqrt{3}.
    \end{align*}
    Therefore, $\norm{\mathbf{s}-\mathbf{p}} > \frac{1}{\sqrt{3}}$.
\end{proof}

\begin{remark}
 From Lemma \ref{symmetries} we already know that the lengths for a pair of terminals adjacent to the same Steiner point are the same and can hence compute the lengths using Theorem \ref{onetwenty}. In contrast, we did not previously know how to lower bound the length of an edge between a Steiner point and a terminal when the Steiner point is adjacent to only one terminal. Theorem \ref{orphanlength} now provides such a bound.
\end{remark}
\subsection{Constructing Steiner trees for the regular simplex}
To describe our construction of Steiner trees of the regular simplex, we need one more definition.
\begin{definition}[The Split of a Point]
    For $\mathbf{x} \in \R^d$, we define the \textit{split} of $\mathbf{x}$ to be $\mathbf{x'} \in \R^{2d}$ such that $\mathbf{x'}=\left(\frac{x_{1}}{2},\frac{x_{1}}{2},\ldots,\frac{x_{d}}{2},\frac{x_{d}}{2}\right)$. 
\end{definition}

\begin{lemma}
\label{spl}
    Let $\mathbf{x},\mathbf{y},\mathbf{z} \in \mathbb{R}^{d}$ such that the angle included by $\mathbf{x}-\mathbf{y}$ and $\mathbf{z}-\mathbf{y}$ is $\alpha$. Let $\mathbf{x'}, \mathbf{y'}, $ and $\mathbf{z'}$ denote the splits of $\mathbf{x}, \mathbf{y}, $ and $\mathbf{z}$, respectively. Then, the angle included by $\mathbf{x'}-\mathbf{y'}$ and $\mathbf{z'}-\mathbf{y'}$ is $\alpha$.
\end{lemma}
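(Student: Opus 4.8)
The plan is to observe that the split is a \emph{similarity transformation} — a linear map that scales all Euclidean inner products by a single constant — from which angle-invariance is immediate. First I would record that the map $\phi \colon \R^{d} \to \R^{2d}$ sending $\mathbf{x} \mapsto \mathbf{x'}$ is linear, since each output coordinate equals $x_i/2$ for the appropriate $i$, a linear function of $\mathbf{x}$. Linearity means the split commutes with subtraction, so $\mathbf{x'}-\mathbf{y'} = \phi(\mathbf{x}-\mathbf{y})$ and $\mathbf{z'}-\mathbf{y'}=\phi(\mathbf{z}-\mathbf{y})$. This reduces the claim to showing that $\phi$ preserves the angle between any two vectors $\mathbf{u}=\mathbf{x}-\mathbf{y}$ and $\mathbf{v}=\mathbf{z}-\mathbf{y}$.

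Next I would compute the effect of $\phi$ on the dot product. Because $\phi(\mathbf{u})$ consists of two copies of $u_i/2$ for each $i$,
\[
\phi(\mathbf{u}) \cdot \phi(\mathbf{v}) = \sum_{i=1}^{d} 2\cdot \frac{u_i}{2}\cdot\frac{v_i}{2} = \frac{1}{2}\sum_{i=1}^{d} u_i v_i = \frac{1}{2}\,\mathbf{u}\cdot\mathbf{v}.
\]
Specializing to $\mathbf{v}=\mathbf{u}$ gives $\norm{\phi(\mathbf{u})} = \tfrac{1}{\sqrt{2}}\norm{\mathbf{u}}$, confirming that $\phi$ is a similarity with ratio $1/\sqrt{2}$.

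Finally I would substitute into the cosine formula for the included angle and cancel the common factor of $\tfrac{1}{2}$:
\[
\cos\bigl(\angle(\phi(\mathbf{u}),\phi(\mathbf{v}))\bigr) = \frac{\phi(\mathbf{u})\cdot\phi(\mathbf{v})}{\norm{\phi(\mathbf{u})}\,\norm{\phi(\mathbf{v})}} = \frac{\tfrac{1}{2}\,\mathbf{u}\cdot\mathbf{v}}{\tfrac{1}{\sqrt{2}}\norm{\mathbf{u}}\cdot\tfrac{1}{\sqrt{2}}\norm{\mathbf{v}}} = \frac{\mathbf{u}\cdot\mathbf{v}}{\norm{\mathbf{u}}\,\norm{\mathbf{v}}} = \cos\alpha.
\]
Since the included angle lies in $[0,\pi]$, on which cosine is injective, equal cosines force the two angles to coincide, which is the claim. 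There is no genuine obstacle here: the entire content is the single computation that $\phi$ scales inner products uniformly, after which the result is forced. The only point requiring a word of care is the degenerate case in which $\mathbf{u}$ or $\mathbf{v}$ vanishes, but then the included angle is already undefined for the original triple, so the statement is vacuous and nothing needs to be checked.
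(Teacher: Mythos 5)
Your proof is correct and follows essentially the same route as the paper's: both observe that splitting commutes with differences and scales dot products by $\tfrac{1}{2}$ (hence norms by $\tfrac{1}{\sqrt{2}}$), so the cosine of the included angle is unchanged. Your write-up is just a more explicit version of the paper's one-line computation, with the added (harmless) remarks on linearity and the degenerate case.
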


\begin{proof}
    Let $\mathbf{u}=(\mathbf{x}-\mathbf{y})$, $\mathbf{v}=(\mathbf{z}-\mathbf{y})$, $\mathbf{u'}=(\mathbf{x'}-\mathbf{y'})$ and $\mathbf{v'}=(\mathbf{z'}-\mathbf{y'})$. Now $\mathbf{u} \cdot \mathbf{v}$ = $2\mathbf{u'} \cdot \mathbf{v'}$, $\norm{\mathbf{u} }=\sqrt{2}\norm{\mathbf{u'}}$ and the same with the norms of $\mathbf{v}$ and $\mathbf{v'}$. Now it is easy to see $\frac{\mathbf{u} \cdot \mathbf{v}}{\norm{\mathbf{u}} \norm{\mathbf{v}}} = \frac{\mathbf{u'} \cdot \mathbf{v'}}{\norm{\mathbf{u'}} \norm{\mathbf{v'}}}$.
\end{proof}

We will show how to leverage this idea of split to explicitly construct candidate-optimal Steiner trees of the regular simplex from optimal Steiner trees of smaller regular simplices (which can be computed directly). We formalize this in the following definition.
\begin{definition}[Candidate-optimal Steiner tree]
A Steiner tree $T$ of a point configuration is called a \textit{candidate-optimal Steiner tree} if each pair of edges incident to a common vertex in $T$ include an angle of at least 120 degrees and each Steiner point is of degree exactly $3$. 
\end{definition}

Finally, we describe how to construct a candidate-optimal, full Steiner tree of the regular $2d$-simplex from a candidate-optimal full Steiner tree of the regular $d$-simplex. In the below, we will use the fact that all terminals are leaf nodes in an optimal Steiner tree if and only if it is full. This is clear by computing the degree sum of the tree in two ways: using that Steiner points are all degree $3$ and that a full Steiner tree on $n$ terminals is a tree on $2n-2$ total nodes.
\begin{theorem}[Doubling the Tree]
\label{twice}
    Let $d \geq 3$ and let $T$ be an optimal Steiner tree of a regular $d$-simplex. Let $S$ be the set of Steiner points in $T$. Then by the following procedure, we obtain a full, candidate-optimal Steiner tree $T'$ of the regular $2d$-simplex:
    \begin{enumerate}
        \item For each $\mathbf{s} \in S$, denote the split of $\mathbf{s}$ by $\mathbf{s'}$. The set $\{\mathbf{s'} \,:\, \mathbf{s} \in S\}$ is a subset of the Steiner points $S'$ in $T'$.
        \item For all $\mathbf{r},\mathbf{s} \in S$ such that $(\mathbf{r}, \,\mathbf{s}) \in E(T)$, let $(\mathbf{r'}, \,\mathbf{s'}) \in E(T')$.
        \item For each terminal $\mathbf{e_i} \in V(T)$, let $\mathbf{s_i}$ be the adjacent Steiner point. Then we obtain new Steiner points $\mathbf{x_i}$ in $T'$ by finding the Fermat points of the triangles formed by $\mathbf{e_{2i-1}}, \mathbf{e_{2i}}, \mathbf{s'_i}$.
        \item Add edges $(\mathbf{e_{2i-1}}, \,\mathbf{x_i})$, $(\mathbf{e_{2i}}, \,\mathbf{x_i})$ and $(\mathbf{x_i}, \,\mathbf{s'_i})$ to $T'$.
    \end{enumerate}
\end{theorem}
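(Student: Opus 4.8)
The plan is to verify the two defining properties of a full, candidate-optimal Steiner tree separately: first the combinatorial structure (that $T'$ is a tree whose leaves are exactly the $2d$ terminals and whose Steiner points all have degree $3$), and then the angle condition (that every pair of edges meeting at a common vertex subtends at least $120\degree$). The engine behind the geometric part is Lemma \ref{spl} together with the elementary fact that the split map $\mathbf{x} \mapsto \mathbf{x'}$ scales every distance by $1/\sqrt 2$; consequently the subtree of $T'$ spanned by the split Steiner points $\{\mathbf{s'}\}$ and the inter-Steiner edges is an angle-preserving, uniformly scaled copy of the corresponding subtree of $T$. I would also record the key identity that the split of a terminal $\mathbf{e_i}$ of the $d$-simplex equals the midpoint $\mathbf{m_i} := \tfrac12(\mathbf{e_{2i-1}} + \mathbf{e_{2i}})$ of the corresponding cherry in the $2d$-simplex.

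For the combinatorial structure, I would count: $T$ is full with $d-2$ Steiner points, so the construction produces $d-2$ split Steiner points plus one new Fermat point $\mathbf{x_i}$ per terminal of $T$, giving $(d-2) + d = 2d - 2$ Steiner points, exactly the count for a full tree on $2d$ terminals. Each split point $\mathbf{s'}$ inherits the degree of $\mathbf{s}$ in $T$ (each inter-Steiner edge of $T$ becomes an inter-split edge, and each terminal-edge at $\mathbf{s}$ becomes an edge $(\mathbf{s'}, \mathbf{x_i})$), hence has degree $3$; each $\mathbf{x_i}$ has degree $3$ by construction; and each $\mathbf{e_{2i-1}}, \mathbf{e_{2i}}$ is a leaf. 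Since the split points with the inter-split edges form a tree (the image of the Steiner subtree obtained by deleting the leaves of the full tree $T$) and each $\mathbf{x_i}$ attaches a pendant cherry to it, $T'$ is a tree, and fullness follows from the degree count.

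The angle condition is the heart of the argument. At each new Steiner point $\mathbf{x_i}$ the three incident edges meet at $120\degree$ \emph{provided} the Fermat point of $\triangle \mathbf{e_{2i-1}}\mathbf{e_{2i}}\mathbf{s'_i}$ is interior, i.e. the triangle has all angles below $120\degree$. This triangle is isosceles with apex $\mathbf{s'_i}$ and base $\mathbf{e_{2i-1}}\mathbf{e_{2i}}$ of length $\sqrt 2$, so the base angles are acute and only the apex angle can fail; the apex angle is below $120\degree$ iff $\sqrt 2 < \sqrt 3\,\norm{\mathbf{s'_i} - \mathbf{e_{2i-1}}}$. Since $\mathbf{s'_i}\mathbf{m_i}$ is perpendicular to the base and $\norm{\mathbf{m_i} - \mathbf{e_{2i-1}}} = 1/\sqrt 2$, Pythagoras and the scaling of the split give $\norm{\mathbf{s'_i} - \mathbf{e_{2i-1}}}^2 = \tfrac12\norm{\mathbf{s_i} - \mathbf{e_i}}^2 + \tfrac12$, so the apex condition is exactly $\norm{\mathbf{s_i} - \mathbf{e_i}} > 1/\sqrt 3$, precisely the content of Theorem \ref{orphanlength}. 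For the split Steiner points, I would invoke the reflection $\tau$ that swaps coordinates $2i-1$ and $2i$: it fixes $\mathbf{s'_i}$ and interchanges $\mathbf{e_{2i-1}}, \mathbf{e_{2i}}$, hence fixes the unique Fermat point $\mathbf{x_i}$, forcing $\mathbf{x_i}$ onto the axis of symmetry through $\mathbf{s'_i}$ and $\mathbf{m_i}$; as $\mathbf{x_i}$ is interior it lies between $\mathbf{s'_i}$ and $\mathbf{m_i}$, so the ray $\mathbf{s'_i}\mathbf{x_i}$ agrees in direction with $\mathbf{s'_i}\mathbf{m_i}$. Since $\mathbf{m_i}$ is the split of $\mathbf{e_i}$, Lemma \ref{spl} then identifies every angle at $\mathbf{s'_i}$ with the corresponding angle at $\mathbf{s_i}$ in $T$, which is at least $120\degree$ by Theorem \ref{onetwenty}.

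I expect the main obstacle to be exactly the interiority of the Fermat points $\mathbf{x_i}$: this is the one place where a merely qualitative picture fails and the sharp constant $1/\sqrt 3$ matters, which is why Theorem \ref{orphanlength} is needed here and is the crux of the whole construction. The remaining verifications — the degree and connectivity count, and the reduction of angles at split points to angles in $T$ via $\tau$ and Lemma \ref{spl} — are routine once the split identity $\mathbf{e'_i} = \mathbf{m_i}$ and the distance computation above are in hand.
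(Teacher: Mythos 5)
Your proof is correct and follows essentially the same route as the paper: a degree count for fullness, Lemma \ref{spl} to transfer the $120\degree$ angles to the split Steiner points (with the split of a terminal identified as the midpoint of its cherry), and a reduction of the Fermat-point existence to the condition $\norm{\mathbf{s_i}-\mathbf{e_i}} > 1/\sqrt{3}$, supplied by Theorem \ref{orphanlength}. The only (harmless) differences are that you invoke Theorem \ref{orphanlength} uniformly where the paper separately handles terminals sharing a Steiner point by computing the exact edge length $\sqrt{2/3}$ via Lemma \ref{symmetries}, and that you derive the collinearity of $\mathbf{s'_i}$, $\mathbf{x_i}$, $\mathbf{m_i}$ from the isosceles symmetry rather than from Lemma \ref{symmetries}.
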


\begin{proof}
Note that, assuming that the Fermat points of the triangles of the form $\triangle \mathbf{e_{2i-1}} \mathbf{e_{2i}} \mathbf{s'_i}$ exist, $T'$ is a full Steiner tree of the regular $2d$-simplex. The set of Steiner points $\{\mathbf{s'} \,:\, \mathbf{s} \in T\}$ are connected via the same tree topology as the Steiner points in $T$. Then, for each $i \in [d]$, each terminal $\mathbf{e_{2i}}$ or $\mathbf{e_{2i-1}}$ is connected to this tree via the Steiner tree of the triangle $\triangle \mathbf{e_{2i-1}} \mathbf{e_{2i}} \mathbf{s'_i}$ (via the additional Steiner point $\mathbf{x_i}$). Since one Steiner point is added to $T'$ for each Steiner point and terminal in $T$ and $T$ is a full Steiner tree of the regular $d$-simplex, $T'$ has $2d - 2$ total Steiner points and is also full (this uses Corollary \ref{leaves} and the discussion preceding this theorem).

We need to verify two claims. First, we need to show that the Fermat points always exist for the third step of the construction. Secondly, we need to prove that every included angle between two adjacent edges is $120\degree$ (so that $T'$ is a candidate-optimal Steiner tree). First, consider the terminal $\mathbf{e_i}$ in $T$. 

To prove the first claim, we distinguish between two cases. 
First, suppose that $\mathbf{e_i}$ shares its neighboring Steiner point with another terminal in $T$. In our coordinate system, the distance between every two terminals is exactly $\sqrt{2}$. By Theorem \ref{unique}, it holds
    \begin{equation*}
        \norm{\mathbf{e_i}-\mathbf{s_i}}=\norm{\mathbf{e_j}-\mathbf{s_i}}=\sqrt{\frac{2}{3}},
    \end{equation*}
    applying Lemma \ref{symmetries} and Theorem \ref{onetwenty}.

    Now, we denote the center of the segment $\mathbf{e_{2i-1}}\mathbf{e_{2i}}$ by $\mathbf{c_i}$. It holds that $\mathbf{c_i}$ is the split of $\mathbf{e_i}$. Therefore (following from the proof of Lemma \ref{spl}),
    \begin{equation*}
        \norm{\mathbf{c_i}-\mathbf{s'_{i}}}=\frac{1}{\sqrt{2}}\norm{\mathbf{e_i}-\mathbf{s_i}}=\frac{1}{\sqrt{3}}.
    \end{equation*}
    To get the Fermat point of $\triangle \mathbf{e_{2i-1}}\mathbf{e_{2i}}\mathbf{s'_{i}}$, basic trigonometry tells us that we need to find a point at distance $\frac{1}{\sqrt{6}}$ from $\mathbf{c_i}$ in the direction of $\mathbf{s'_{i}}$ (see Figure \ref{fig: trigonometry}). Since $\norm{\mathbf{c_i}-\mathbf{s'_{i}}} > \frac{1}{\sqrt{6}}$, the Fermat point $\mathbf{x_i}$ does exist.


\begin{figure}[h]
    \centering

\tikzset{every picture/.style={line width=0.75pt}} 

\begin{tikzpicture}[x=0.75pt,y=0.75pt,yscale=-1,xscale=1]

\draw   (323.38,31.85) -- (461,149.55) -- (185.75,149.55) -- cycle ;
\draw  [dash pattern={on 0.84pt off 2.51pt}]  (323.55,31.85) -- (323.91,149.55) ;
\draw   (323.38,89.15) -- (461,149.55) -- (185.75,149.55) -- cycle ;
\draw  [fill={rgb, 255:red, 0; green, 0; blue, 0 }  ,fill opacity=1 ] (320.79,34) .. controls (321.98,35.53) and (324.18,35.81) .. (325.71,34.62) .. controls (327.24,33.43) and (327.51,31.22) .. (326.32,29.69) .. controls (325.13,28.16) and (322.93,27.89) .. (321.4,29.08) .. controls (319.87,30.27) and (319.6,32.47) .. (320.79,34) -- cycle ;
\draw  [fill={rgb, 255:red, 0; green, 0; blue, 0 }  ,fill opacity=1 ] (320.61,91.3) .. controls (321.8,92.83) and (324,93.11) .. (325.53,91.92) .. controls (327.06,90.73) and (327.33,88.52) .. (326.14,86.99) .. controls (324.95,85.46) and (322.75,85.19) .. (321.22,86.37) .. controls (319.69,87.56) and (319.42,89.77) .. (320.61,91.3) -- cycle ;
\draw  [fill={rgb, 255:red, 0; green, 0; blue, 0 }  ,fill opacity=1 ] (321.15,151.71) .. controls (322.34,153.24) and (324.54,153.51) .. (326.07,152.32) .. controls (327.6,151.13) and (327.87,148.93) .. (326.68,147.4) .. controls (325.49,145.87) and (323.29,145.59) .. (321.76,146.78) .. controls (320.23,147.97) and (319.96,150.18) .. (321.15,151.71) -- cycle ;
\draw  [fill={rgb, 255:red, 208; green, 2; blue, 27 }  ,fill opacity=1 ] (463.77,147.4) .. controls (462.58,145.87) and (460.37,145.59) .. (458.84,146.78) .. controls (457.31,147.97) and (457.04,150.18) .. (458.23,151.71) .. controls (459.42,153.24) and (461.63,153.51) .. (463.16,152.32) .. controls (464.69,151.13) and (464.96,148.93) .. (463.77,147.4) -- cycle ;
\draw  [fill={rgb, 255:red, 208; green, 2; blue, 27 }  ,fill opacity=1 ] (188.52,147.4) .. controls (187.33,145.87) and (185.12,145.59) .. (183.59,146.78) .. controls (182.06,147.97) and (181.79,150.18) .. (182.98,151.71) .. controls (184.17,153.24) and (186.38,153.51) .. (187.91,152.32) .. controls (189.44,151.13) and (189.71,148.93) .. (188.52,147.4) -- cycle ;
\draw    (334,104.1) -- (330,98.1) ;
\draw    (313,104.1) -- (317,99.1) ;
\draw   (336.59,95.03) .. controls (334.48,100.21) and (329.64,103.9) .. (323.96,104.14) -- (323.38,89.15) -- cycle ;
\draw   (310.16,95.03) .. controls (312.35,100.39) and (317.44,104.15) .. (323.38,104.15) .. controls (323.55,104.15) and (323.72,104.15) .. (323.9,104.14) -- (323.38,89.15) -- cycle ;

\draw (321.55,28.45) node [anchor=south east] [inner sep=0.75pt]    {$\mathbf{s} '_{\mathbf{i}}$};
\draw (321.38,87.3) node [anchor=south east] [inner sep=0.75pt]    {$\mathbf{x}_{\mathbf{i}}$};
\draw (321.91,152.95) node [anchor=north east] [inner sep=0.75pt]    {$\mathbf{c}_{\mathbf{i}}$};
\draw (343.22,104.77) node [anchor=north] [inner sep=0.75pt]  [color={rgb, 255:red, 0; green, 0; blue, 0 }  ,opacity=1 ]  {$6\mathnormal{0\degree }$};
\draw (377,172.4) node [anchor=north west][inner sep=0.75pt]  [font=\normalsize]  {$\frac{1}{\sqrt{2}}$};
\draw (334.67,177.69) node [anchor=north west][inner sep=0.75pt]  [font=\Huge,rotate=-272,xslant=-0.04]  {$\Biggl\{$};
\draw (185.75,152.95) node [anchor=north] [inner sep=0.75pt]    {$\mathbf{e_{2i-1}}$};
\draw (463,152.95) node [anchor=north west][inner sep=0.75pt]    {$\mathbf{e_{2i}}$};

\end{tikzpicture}

    \caption{Diagram showing the existence of the Fermat point $\mathbf{x_i}$. Note that $\norm{\mathbf{c_{i}}-\mathbf{s'_{i}}} > \frac{1}{\sqrt{2}}\cot(60^{\circ})=\frac{1}{\sqrt{6}}$.}
    \label{fig: trigonometry}
\end{figure}
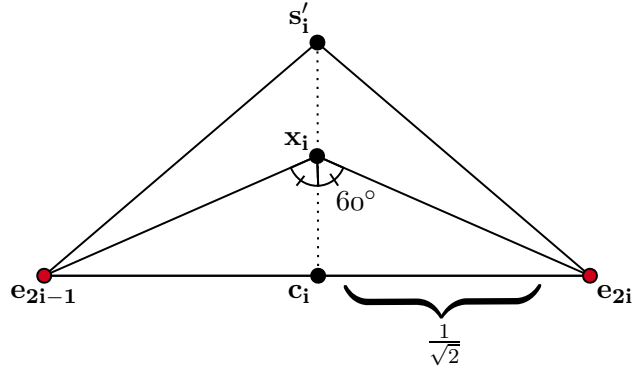

In the case that $\mathbf{e_i}$ does not share its neighboring Steiner point with another terminal in $T$, $\mathbf{c_i}$ is still well-defined as above. Using $\norm{\mathbf{c_i}-\mathbf{s'_{i}}}=\frac{1}{\sqrt{2}}\norm{\mathbf{e_i}-\mathbf{s_i}}$ and that $\norm{\mathbf{e_{2i-1}} - \mathbf{s_i'}} = \norm{\mathbf{e_{2i}} - \mathbf{s_i'}}$ by definition of split, we need to find a point at distance $\frac{1}{\sqrt{6}}$ from $\mathbf{c_i}$ in the direction of $\mathbf{s'_{i}}$ to find a Fermat point of $\triangle \mathbf{e_{2i-1}}\mathbf{e_{2i}}\mathbf{s'_{i}}$. Hence, a Fermat point of this triangle exists if and only if $\norm{\mathbf{e_i}-\mathbf{s_i}} > 1/\sqrt{3}$ and a Fermat point then exists by Theorem \ref{orphanlength}.

 To prove the second claim, observe that all such the angles were equal to $120\degree$ in $T$ (by Theorem \ref{onetwenty}). Lemma \ref{spl} then implies that the angles stay the same size in the induced subtree on Steiner points in $\{\mathbf{s'} \,:\, \mathbf{s} \in T\}$. We already established that $\mathbf{c_i}$ is the split of $\mathbf{e_i}$, so, $\mathbf{e_i}$ and $\mathbf{e_j}$ share an adjacent Steiner point in $T$ and $\mathbf{c_j}$ is the center of the segment $\mathbf{e_{2j-1}}\mathbf{e_{2j}}$, then 
$|\angle \mathbf{c_i}\mathbf{s'_{i}}\mathbf{c_j}|$ in $T'$ is equal to $|\angle \mathbf{e_i}\mathbf{s_i}\mathbf{p_j}|$ in $T$ which is $120\degree$ by Theorem \ref{onetwenty}. Since the points $\mathbf{c_i}, \mathbf{x_i}, \mathbf{s'_{i}}$ and $\mathbf{c_j}, \mathbf{x_j}, \mathbf{s'_{i}}$ are colinear from the second part of Lemma \ref{symmetries} we have that  \begin{equation*}
        |\angle \mathbf{x_i}\mathbf{s'_{i}}\mathbf{x_j}|=120\degree.
    \end{equation*} 

Finally, the remaining angles involve the Steiner points added which were computed as Fermat points of triangles. By definition of Fermat points, the edges sharing an endpoint at these Steiner points include angles of exactly $120\degree$.
\end{proof}

\begin{corollary} \label{cor: twice rep}
Let $d \geq 3, k \geq 0$ and let $T$ be an optimal Steiner tree of a regular $d$-simplex. Repeating the procedure in Theorem \ref{twice},  $k$ many times, yields a full, candidate-optimal Steiner tree $T'$ of a regular $2^kd$-simplex.
\end{corollary}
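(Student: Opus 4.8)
The plan is to induct on $k$, strengthening the statement so that the inductive hypothesis is self-sustaining. The base case $k=0$ is immediate: an optimal Steiner tree $T$ of the regular $d$-simplex is full (its terminals are leaves by Corollary \ref{leaves}, hence it has exactly $d-2$ Steiner points, all of degree three) and candidate-optimal (every pair of incident edges includes an angle of at least $120\degree$ by Theorem \ref{onetwenty}, and each Steiner point has degree exactly three by Corollary \ref{coplanar}). The step from $k=0$ to $k=1$ is exactly Theorem \ref{twice} applied to the optimal tree $T$, yielding a full candidate-optimal Steiner tree $T^{(1)}$ of the regular $2d$-simplex.

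The only genuine subtlety is that Theorem \ref{twice} is stated for an \emph{optimal} input, whereas for $k\geq 1$ the tree fed into the next doubling is only known to be candidate-optimal. To close this gap I would strengthen the hypothesis to record the structural feature that makes doubling repeatable: for $k\geq 1$, the tree $T^{(k)}$ is a full candidate-optimal Steiner tree of the regular $2^k d$-simplex in which every Steiner point adjacent to a terminal is adjacent to \emph{exactly two} terminals, each at distance $\sqrt{2/3}$. This invariant holds for $T^{(1)}$, since its terminal-adjacent Steiner points are precisely the Fermat points $\mathbf{x_i}$ of Theorem \ref{twice}: each $\mathbf{x_i}$ joins the symmetric pair $\mathbf{e_{2i-1}},\mathbf{e_{2i}}$, which lie at mutual distance $\sqrt 2$ and subtend a $120\degree$ angle at $\mathbf{x_i}$, so the law of cosines gives edge length $\sqrt{2/3}$.

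With this invariant in hand I would re-run the proof of Theorem \ref{twice} on $T^{(k)}$ and check that optimality is no longer needed. The only place that proof used optimality beyond the $120\degree$ property was the appeal to Theorem \ref{orphanlength} in the \emph{second} case, i.e.\ when a terminal does not share its adjacent Steiner point. The invariant excludes this case entirely, since every terminal of $T^{(k)}$ is co-attached with a second terminal, so only the \emph{first} case occurs. There, existence of each new Fermat point requires only $\norm{\mathbf{c_i}-\mathbf{s'_i}} = \tfrac{1}{\sqrt 2}\,\norm{\mathbf{e_i}-\mathbf{s_i}} = \tfrac{1}{\sqrt 2}\sqrt{2/3} = \tfrac{1}{\sqrt 3} > \tfrac{1}{\sqrt 6}$, which follows from Lemma \ref{spl} together with the recorded edge length (this also sidesteps any need to invoke a symmetry/relative-minimality statement for the non-optimal tree $T^{(k)}$). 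The angle bookkeeping is unchanged: splitting preserves angles by Lemma \ref{spl}, the colinearity of $\mathbf{c_i},\mathbf{x_i},\mathbf{s'_i}$ is forced by the symmetry of the split about the perpendicular bisector of $\mathbf{e_{2i-1}}\mathbf{e_{2i}}$, and the angles at the new Fermat points are $120\degree$ by definition. Finally, the newly created terminal-adjacent Steiner points are again Fermat points, each joined to a symmetric pair of terminals at distance $\sqrt{2/3}$, so the strengthened invariant is restored and the induction closes.

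The main obstacle is precisely this mismatch between the "optimal" hypothesis of Theorem \ref{twice} and the "candidate-optimal" trees produced by iteration: one must confirm that re-applying the procedure never secretly invokes Theorem \ref{orphanlength}. Isolating the pairing-plus-equidistance invariant and verifying that the doubling step reproduces it is the crux; once that is established, every remaining step is a direct transcription of the proof of Theorem \ref{twice}.
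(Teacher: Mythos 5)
Your proof is correct and follows essentially the same route as the paper: both identify that the only use of optimality beyond the $120\degree$ property is the appeal to Theorem \ref{orphanlength} for unpaired terminals, and both close the induction by observing that after one doubling every terminal shares its Steiner point with a sibling terminal at distance $\sqrt{2/3}$ (the isosceles/Fermat-point computation), so the Fermat points needed in subsequent iterations always exist. Your explicit formulation of this as a strengthened inductive invariant is a slightly cleaner packaging of exactly the argument the paper gives inline.
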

\begin{proof}
Optimality of $T$ handles $k = 0$ (using that the optimal Steiner tree of the regular simplex is full, using the discussion preceding Theorem \ref{twice} and Lemma \ref{lem: max pt prop}). Theorem \ref{twice} handles the case of $k = 1$. Assume the result holds for up to some $r \geq 1$ and let $T$ be the optimal tree with Steiner points $S$. We show the result for $ k = r +1$. The same proof as in Theorem \ref{twice} shows that $T'$ computed from $T$ by the procedure in Theorem \ref{twice} is a full Steiner tree of the regular simplex, assuming that the relevant Fermat points exist.

It remains to show that the Fermat points exist in each subsequent iteration and all the angles formed by edges at a common endpoint are $120\degree$. 

First, note that, since we assumed $r \geq 1$, by the procedure in Theorem \ref{twice}, each terminal in $T$ shares an adjacent Steiner point with another terminal. Namely, $\mathbf{e_{2i}}$ shares an adjacent Steiner point $\mathbf{s_{2i}}$ with $\mathbf{e_{2i-1}}$ for each $i \in [2^{r-1}d]$. The third neighbor $\mathbf{r}$ of $\mathbf{s_{2i}}$ in $T$ is the split of the Steiner point neighboring $\mathbf{e_i}$ in the tree preceding $T$. In particular, then $\norm{\mathbf{r} - \mathbf{e_{2i}}} = \norm{\mathbf{r} - \mathbf{e_{2i - 1}}}$, so $\triangle \mathbf{r} \mathbf{e_{2i - 1}} \mathbf{e_{2i}}$ is isosceles. Hence, $\mathbf{s_{2i}}$, the Fermat point of this triangle must be equidistant to $\mathbf{e_{2i - 1}}$ and $\mathbf{e_{2i}}$. Then,  
\[
\norm{\mathbf{e_{2i - 1}} - \mathbf{s_{2i}}} = \norm{\mathbf{e_{2i}} - \mathbf{s_{2i}}} = \sqrt{\frac{2}{3}}
\]
using that the angles includes by the edges to $\mathbf{s_{2i}}$ must be $120\degree$ (since it is a Fermat point) and the distance between terminals is $\sqrt{2}$. Now, as in Theorem \ref{twice}, for the Fermat point of $\triangle{\mathbf{e_{4i}} \mathbf{e_{4i-1}} \mathbf{s_{2i}'}}$ to exist in $T'$, we need that the center of $\mathbf{e_{4i}} \mathbf{e_{4i-1}}$, $\mathbf{c_{2i}}$, is at least $\frac{1}{\sqrt{6}}$ from $\mathbf{s_{2i}'}$. But, $\mathbf{c_{2i}}$ is the split of $\mathbf{e_{2i}}$ and $\norm{\mathbf{e_{2i}} - \mathbf{s_{2i}}}$ is $\sqrt{\frac{2}{3}}$ from the above. Hence, by Lemma \ref{spl}, we have $\norm{\mathbf{c_{2i}} - \mathbf{s_{2i}'}} = \frac{1}{\sqrt{3}} > \frac{1}{\sqrt{6}}$, as necessary. So, the Fermat points exist in constructing $T'$.

Finally, we need to show that all included angles between edges sharing an endpoint are at least $120\degree$ in $T'$.
The argument here is identical to the argument in Theorem \ref{twice} (after applying the inductive hypothesis), completing the proof.
\end{proof}

Consider the Steiner trees of $d$-dimensional simplices for some small value of $d$ where we can determine explicit coordinates for every Steiner point (e.g., $d=3,4$). The construction described in Theorem \ref{twice} yields the same Steiner points as the numerical algorithm in \cite{smith} for $d=6,8,12$ (up to small errors presumably caused by the approximate nature of Smith's algorithm). For higher values of $d$ it was not checked due to computational limitations.

\subsection{Explicit construction for $d=2^{k}$}

Applying Corollary \ref{cor: twice rep} starting from $d = 4$ yields an explicit construction for Steiner trees of regular simplices on $n=2^{k}$ terminals. We analyze that construction in detail in this section. To start, find explicit coordinates for the Steiner points of a Steiner tree of the simplex on $d=4$ terminals. Then, apply Theorem \ref{twice}, $(k-2)$ many times.

Our topology will be given by two full binary trees $T^0,T^1$, each on $2^{k-1}$ terminals, and an edge connecting both roots. Now recall Definition \ref{ATB} and label $T^0$ with respect to $0$ and $T^1$ with respect to $1$. This is the representation of our tree that we will work with (see Figure \ref{fig: binary representation of points}). For simplicity, when we will talk about coordinates related to a terminal, we will use their unique binary label instead. The labels $i$ will be in the range from $0$ to $d-1$ (in binary). Therefore, the terminal with label $i$ will be $\mathbf{e_{i+1}}$.

For $k \in \mathbb{N}$, let $\{T_{m}\}^{k}_{m=2}$ denote the sequence of trees from our construction. Each vertex has a superscript and a subscript---the superscript $m$ refers to the vertex belonging to the vertex set of $T_m$ and the subscript refers to the assigned binary string label. Steiner points obtained by splitting will retain the same binary string label. Those obtained as a new Fermat point will adopt the binary string label of the terminal whose binary label was appended to form the two terminal endpoints of the triangle. E.g., the Steiner point obtained as the split of $\mathbf{s^k_b}$ will be $\mathbf{s^{k+1}_b}$. The Steiner point obtained as the Fermat point of $\triangle \mathbf{s^{k+1}_b}\mathbf{p^{k+1}_{b00}}\mathbf{p^{k+1}_{b01}}$ will be $\mathbf{s^{k+1}_{b0}}$.

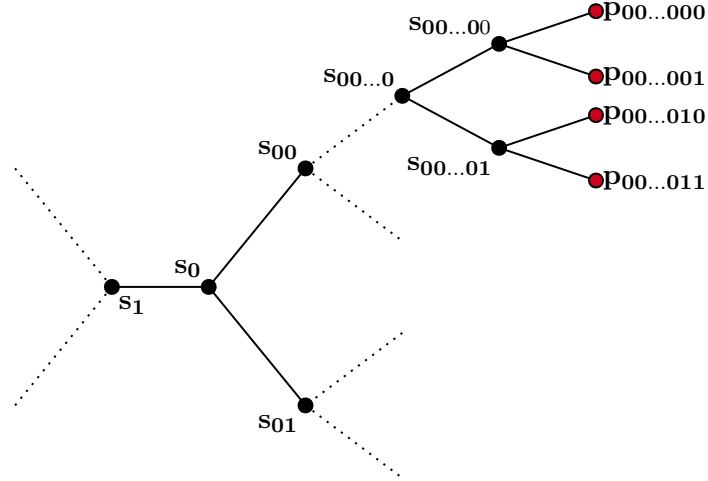
\begin{figure}[h]
    \centering

\tikzset{every picture/.style={line width=0.75pt}} 

\begin{tikzpicture}[x=0.75pt,y=0.75pt,yscale=-1,xscale=1]

\draw    (161.83,162.18) -- (210.66,162.18) ;
\draw  [dash pattern={on 0.84pt off 2.51pt}]  (113,102.37) -- (161.83,162.18) ;
\draw  [dash pattern={on 0.84pt off 2.51pt}]  (113,222) -- (161.83,162.18) ;
\draw    (259.49,102.37) -- (210.66,162.18) ;
\draw    (210.66,162.18) -- (259.49,222) ;
\draw  [dash pattern={on 0.84pt off 2.51pt}]  (308.32,185.38) -- (259.49,222) ;
\draw  [dash pattern={on 0.84pt off 2.51pt}]  (259.49,222) -- (308.32,258.62) ;
\draw  [dash pattern={on 0.84pt off 2.51pt}]  (259.49,102.37) -- (308.32,138.99) ;
\draw  [dash pattern={on 0.84pt off 2.51pt}]  (259.49,102.37) -- (308.32,65.75) ;
\draw    (308.32,65.75) -- (357.15,91.99) ;
\draw    (308.32,65.75) -- (357.15,39.5) ;
\draw    (357.15,39.5) -- (405.98,23) ;
\draw    (357.15,39.5) -- (405.98,56) ;
\draw    (357.15,91.99) -- (405.98,108.49) ;
\draw    (357.15,91.99) -- (405.98,75.5) ;
\draw  [fill={rgb, 255:red, 208; green, 2; blue, 27 }  ,fill opacity=1 ] (408.74,20.85) .. controls (407.55,19.32) and (405.35,19.04) .. (403.82,20.23) .. controls (402.29,21.42) and (402.02,23.63) .. (403.21,25.16) .. controls (404.4,26.69) and (406.61,26.97) .. (408.13,25.78) .. controls (409.66,24.59) and (409.94,22.38) .. (408.74,20.85) -- cycle ;
\draw  [fill={rgb, 255:red, 208; green, 2; blue, 27 }  ,fill opacity=1 ] (408.74,53.84) .. controls (407.55,52.31) and (405.35,52.03) .. (403.82,53.22) .. controls (402.29,54.41) and (402.02,56.62) .. (403.21,58.15) .. controls (404.4,59.68) and (406.61,59.96) .. (408.13,58.77) .. controls (409.66,57.58) and (409.94,55.37) .. (408.74,53.84) -- cycle ;
\draw  [fill={rgb, 255:red, 208; green, 2; blue, 27 }  ,fill opacity=1 ] (408.74,73.34) .. controls (407.55,71.81) and (405.35,71.53) .. (403.82,72.72) .. controls (402.29,73.91) and (402.02,76.12) .. (403.21,77.65) .. controls (404.4,79.18) and (406.61,79.46) .. (408.13,78.27) .. controls (409.66,77.08) and (409.94,74.87) .. (408.74,73.34) -- cycle ;
\draw  [fill={rgb, 255:red, 208; green, 2; blue, 27 }  ,fill opacity=1 ] (408.74,106.33) .. controls (407.55,104.8) and (405.35,104.53) .. (403.82,105.72) .. controls (402.29,106.91) and (402.02,109.11) .. (403.21,110.64) .. controls (404.4,112.17) and (406.61,112.45) .. (408.13,111.26) .. controls (409.66,110.07) and (409.94,107.86) .. (408.74,106.33) -- cycle ;
\draw  [fill={rgb, 255:red, 0; green, 0; blue, 0 }  ,fill opacity=1 ] (359.91,41.65) .. controls (358.72,43.18) and (356.52,43.46) .. (354.99,42.27) .. controls (353.46,41.08) and (353.19,38.88) .. (354.38,37.35) .. controls (355.57,35.82) and (357.78,35.54) .. (359.3,36.73) .. controls (360.83,37.92) and (361.11,40.12) .. (359.91,41.65) -- cycle ;
\draw  [fill={rgb, 255:red, 0; green, 0; blue, 0 }  ,fill opacity=1 ] (359.91,94.15) .. controls (358.72,95.68) and (356.52,95.95) .. (354.99,94.76) .. controls (353.46,93.57) and (353.19,91.37) .. (354.38,89.84) .. controls (355.57,88.31) and (357.78,88.03) .. (359.3,89.22) .. controls (360.83,90.41) and (361.11,92.61) .. (359.91,94.15) -- cycle ;
\draw  [fill={rgb, 255:red, 0; green, 0; blue, 0 }  ,fill opacity=1 ] (311.09,67.9) .. controls (309.89,69.43) and (307.69,69.71) .. (306.16,68.52) .. controls (304.63,67.33) and (304.36,65.12) .. (305.55,63.59) .. controls (306.74,62.06) and (308.95,61.78) .. (310.47,62.97) .. controls (312,64.16) and (312.28,66.37) .. (311.09,67.9) -- cycle ;
\draw  [fill={rgb, 255:red, 0; green, 0; blue, 0 }  ,fill opacity=1 ] (262.26,104.52) .. controls (261.06,106.05) and (258.86,106.33) .. (257.33,105.14) .. controls (255.8,103.95) and (255.53,101.75) .. (256.72,100.21) .. controls (257.91,98.68) and (260.12,98.41) .. (261.65,99.6) .. controls (263.17,100.79) and (263.45,102.99) .. (262.26,104.52) -- cycle ;
\draw  [fill={rgb, 255:red, 0; green, 0; blue, 0 }  ,fill opacity=1 ] (213.43,164.34) .. controls (212.23,165.87) and (210.03,166.15) .. (208.5,164.96) .. controls (206.97,163.77) and (206.7,161.56) .. (207.89,160.03) .. controls (209.08,158.5) and (211.29,158.22) .. (212.82,159.41) .. controls (214.34,160.6) and (214.62,162.81) .. (213.43,164.34) -- cycle ;
\draw  [fill={rgb, 255:red, 0; green, 0; blue, 0 }  ,fill opacity=1 ] (262.26,224.15) .. controls (261.06,225.68) and (258.86,225.96) .. (257.33,224.77) .. controls (255.8,223.58) and (255.53,221.38) .. (256.72,219.85) .. controls (257.91,218.32) and (260.12,218.04) .. (261.65,219.23) .. controls (263.17,220.42) and (263.45,222.62) .. (262.26,224.15) -- cycle ;
\draw  [fill={rgb, 255:red, 0; green, 0; blue, 0 }  ,fill opacity=1 ] (164.6,164.34) .. controls (163.41,165.87) and (161.2,166.15) .. (159.67,164.96) .. controls (158.14,163.77) and (157.87,161.56) .. (159.06,160.03) .. controls (160.25,158.5) and (162.46,158.22) .. (163.99,159.41) .. controls (165.51,160.6) and (165.79,162.81) .. (164.6,164.34) -- cycle ;

\draw (407.98,23) node [anchor=west] [inner sep=0.75pt]    {$\mathbf{p}_{\mathbf{00...000}}$};
\draw (407.98,56) node [anchor=west] [inner sep=0.75pt]    {$\mathbf{p}_{\mathbf{00...001}}$};
\draw (407.98,75.5) node [anchor=west] [inner sep=0.75pt]    {$\mathbf{p}_{\mathbf{00...010}}$};
\draw (407.98,108.49) node [anchor=west] [inner sep=0.75pt]    {$\mathbf{p}_{\mathbf{00...011}}$};
\draw (208.66,158.78) node [anchor=south east] [inner sep=0.75pt]    {$\mathbf{s}_{\mathbf{0}}$};
\draw (163.83,165.58) node [anchor=north west][inner sep=0.75pt]    {$\mathbf{s}_{\mathbf{1}}$};
\draw (257.49,98.97) node [anchor=south east] [inner sep=0.75pt]    {$\mathbf{s}_{\mathbf{00}}$};
\draw (257.49,225.4) node [anchor=north east] [inner sep=0.75pt]    {$\mathbf{s}_{\mathbf{01}}$};
\draw (306.32,62.35) node [anchor=south east] [inner sep=0.75pt]    {$\mathbf{s}_{\mathbf{00...0}}$};
\draw (355.15,95.39) node [anchor=north east] [inner sep=0.75pt]    {$\mathbf{s}_{\mathbf{00...01}}$};
\draw (355.15,36.1) node [anchor=south east] [inner sep=0.75pt]    {$\mathbf{s}_{\mathbf{00...0} 0}$};

\end{tikzpicture}

    \caption{Binary representation of terminals and Steiner points for $n=2^{k}$.}
    \label{fig: binary representation of points}
\end{figure}

For $k=1$, $T_{1}$ is just the line segment $\mathbf{p^1_{0}}\mathbf{p^1_{1}}$ connecting two terminals. For $k=2$, we need to find the Steiner points $\mathbf{s^{2}_{0}}$ and $\mathbf{s^{2}_{1}}$. Lemma \ref{symmetries} gives us the following: the edge between $\mathbf{s^{2}_{0}}$ and $\mathbf{s^{2}_{1}}$ passes through  $\mathbf{c} = \left( \frac{1}{4},\frac{1}{4},\frac{1}{4},\frac{1}{4} \right)$, the centroid of the terminals. Then, it holds that $\mathbf{s^{2}_{0}}$ and $\mathbf{s^{2}_{1}}$ are the Fermat points of $\triangle \mathbf{p^2_{00}}\mathbf{p^2_{01}}\mathbf{c}$ and $\triangle \mathbf{p^2_{10}}\mathbf{p^2_{11}}\mathbf{c}$, respectively. Therefore, we get:
\begin{align*}
    \mathbf{s^{2}_{0}}&=\left( \frac{1}{2} - \frac{1}{2\sqrt{6}},\frac{1}{2} - \frac{1}{2\sqrt{6}},\frac{1}{2\sqrt{6}},\frac{1}{2\sqrt{6}} \right),\\
    \mathbf{s^{2}_{1}}&=\left( \frac{1}{2\sqrt{6}},\frac{1}{2\sqrt{6}},\frac{1}{2} - \frac{1}{2\sqrt{6}},\frac{1}{2} - \frac{1}{2\sqrt{6}} \right).
\end{align*}
For $k \geq 3$, let $\{b_{j}\}^{k-1}_{j=1}$ be the sequence of binary strings of $j$ zeros. For brevity, we will only show the coordinates of points $\mathbf{s^{k}_{b_{j}}}$. To obtain explicit formulas for other Steiner points, it suffices to apply suitable topology-preserving coordinate permutations.

Firstly, based on Theorem \ref{twice}, we know that we obtain $\mathbf{s^{k}_{0}}$ by splitting $\mathbf{s^{k-1}_{0}}$. Repeatedly applying this, we have
\begin{equation*}
    \mathbf{s^{k}_{0}} = \frac{1}{2^{k-2}}\vvector{\frac{1}{2}-\frac{1}{2\sqrt{6}}}{2^{k-1} \, \text{times}}{\frac{1}{2\sqrt{6}}}.
\end{equation*}
Now for each $2 \leq j \leq k-1$, let us find the first tree in our sequence that has a Steiner point with the binary representation of $b_{j}$. It is $T_{j+1}$. In this tree, the point $\mathbf{s^{j+1}_{b_{j}}}$ was constructed as the Fermat point of $\triangle \mathbf{p^{j+1}_{b_{j}0}}\mathbf{p^{j+1}_{b_{j}1}}\mathbf{s^{j+1}_{b_{j-1}}}$. If we denote the center of $\mathbf{p^{j+1}_{b_{j}0}}\mathbf{p^{j+1}_{b_{j}1}}$ as $\mathbf{c}^{j+1}$, we already know from the proof of Corollary \ref{cor: twice rep} that 
\begin{equation*}
    \norm{\mathbf{c}^{j+1}\mathbf{s^{j+1}_{b_{j}}}}=\frac{1}{\sqrt{2}}\norm{\mathbf{c}^{j+1}\mathbf{s^{j+1}_{b_{j-1}}}}.
\end{equation*}
From this it follows that
\begin{align*}
    \mathbf{s^{j+1}_{b_{j}}}&=\mathbf{c}^{j+1} + \frac{1}{\sqrt{2}} \left(\mathbf{s^{j+1}_{b_{j-1}}}-    \mathbf{c}^{j+1} \right)\\
                   &=\frac{1}{\sqrt{2}}\mathbf{s^{j+1}_{b_{j-1}}} + \left(1-\frac{1}{\sqrt{2}} \right) \left( \frac{1}{2},\frac{1}{2},0, \cdots, 0 \right)\\
                   &= \frac{1}{\sqrt{2}}\mathbf{s^{j+1}_{b_{j-1}}} + \left( \frac{1}{2} - \frac{1}{2\sqrt{2}},\frac{1}{2} - \frac{1}{2\sqrt{2}},0, \cdots, 0 \right).
\end{align*}

After that, we need to split $\mathbf{s^{j+1}_{b_{j}}}$ a total of $(k-j-1)$ times to obtain $\mathbf{s^{k}_{b_{j}}}$. Splitting is linear and can be done separately on both summands

\begin{equation*}
    \mathbf{s^{k}_{b_{j}}}=\frac{1}{\sqrt{2}}\mathbf{s^{k}_{b_{j-1}}}+\frac{1}{2^{k-j-1}} \vvector{\frac{1}{2}-\frac{1}{2\sqrt{2}}}{2^{k-j} \, \text{times}}{0}.
\end{equation*}

We can then repeat this step with $\mathbf{s^{k}_{b_{j-1}}}$ and so on until we get to $\mathbf{s^{k}_{0}}$

\begin{align*}
    \mathbf{s^{k}_{b_{j}}}= \left( \frac{1}{\sqrt{2}} \right)^{j-1}\mathbf{s^{k}_{0}} 
    &+ \frac{1}{2^{k-j-1}} \vvector{\frac{1}{2} - \frac{1}{2\sqrt{2}}}{2^{k-j} \, \text{times}}{0}\\
    &+ \frac{1}{ \sqrt{2} \cdot 2^{k-j}} \vvector{\frac{1}{2} - \frac{1}{2\sqrt{2}}}{2^{k-j+1} \, \text{times}}{0}\\
    &\vdots\\
    &+\frac{1}{(\sqrt{2})^{j-1}2^{k-2}} \vvector{\frac{1}{2} - \frac{1}{2\sqrt{2}}}{2^{k-1} \, \text{times}}{0}.\\
\end{align*}

Note that this type of construction is not restricted to powers of two: the same can be done for any initial known Steiner tree. For example we can explicitly write down the coordinates for $d=3 \cdot 2^{k}$, $k \in \mathbb{N}$, where the only part of the expression that changes is the point $\mathbf{s^k_{0}}$. Or, by running some exact algorithm, e.g., Smith's algorithm, we can compute numerical approximations for the Steiner points of the optimal Steiner tree for $d = c$ for some some small constant $c$ and then apply the same technique to write down the coordinates for $d = c \cdot 2^k$, $k \in \mathbb{N}$.

\begin{conjecture}
    This construction yields an optimal Steiner tree for every regular $d$-simplex, where $d=2^{k}$, $k \geq 1$. Moreover, the natural generalization yields the optimal Steiner tree for every regular $d$-simplex, $d \geq 3$.
\end{conjecture}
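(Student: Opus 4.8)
The plan is to deduce optimality in two stages: first, that the tree $T'$ produced by the doubling construction (Theorem \ref{twice}, iterated via Corollary \ref{cor: twice rep}) is the \emph{relatively minimal} tree of its topology, and second, that this topology is length-minimizing among all full topologies on the regular $d$-simplex. The first stage should be essentially routine given the machinery already developed. By Corollary \ref{leaves} the optimal tree is full, so it suffices to range over full topologies. The construction is candidate-optimal: every pair of incident edges meets at exactly $120\degree$ and every Steiner point has degree three, while Theorem \ref{orphanlength} and the case analysis in Theorem \ref{twice} guarantee that all the Fermat points exist with strictly positive edge lengths, so no edge degenerates. A full tree in which all meeting angles equal $120\degree$ and all edges have positive length is precisely the relatively minimal realization of its topology, and by the Uniqueness Theorem (Theorem \ref{unique}) it is the unique such tree. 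Thus $T'$ equals the relatively minimal tree of the good-tree/double-binary-tree topology of Conjecture \ref{conj: top of regular simplex}.

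The crux is the second stage, which is exactly Smith's topological conjecture (Conjecture \ref{conj: top of regular simplex}): that among all full topologies, the one underlying our construction yields the shortest relatively minimal tree. I would attack this through an exchange argument anchored on the semi-regularity characterization of \cite{Humphries_Wu_2013}. Concretely, take the optimal topology and its relatively minimal realization; if some pair of Steiner points $(u,v)$ fails to be semi-regular, consider the subtree swap that exchanges an over-large subtree hanging off $u$ with an under-large one hanging off $v$, rebalancing the induced partition of the terminal set. Because the terminals of the regular simplex are mutually equidistant and the configuration is invariant under the full symmetric group on coordinates, such a swap maps the terminal set to itself, and the key claim would be that it never increases the length of the relatively minimal tree, and strictly decreases it when the pair is genuinely unbalanced. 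Iterating drives the topology to a semi-regular one, which by \cite{Humphries_Wu_2013} is unique and coincides with the conjectured topology; combined with the first stage this would establish optimality for all $d$, the base cases of the doubling being handled either exactly for small $d$ or by the optimal seed tree.

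The main obstacle is making the swap inequality rigorous at the level of \emph{geometric} length rather than the combinatorial terminal Wiener index. For the terminal Wiener index the effect of a subtree swap is transparent, since it changes each edge's contribution $|\ell(C_u)||\ell(C_v)|$ in a controlled way, and \cite{Humphries_Wu_2013} already identifies the minimizers. The difficulty is that after a swap one must re-solve for the relatively minimal realization, which has no closed form, so the length cannot be read off combinatorially. The route I would pursue is to express the total length as a sum of edge contributions and bound each contribution in terms of the cut sizes governing the terminal Wiener index, using the structural results already proved: Lemma \ref{symmetries} shows that the extending line of each summary edge passes through the centroid of the terminals it separates and that all those terminal coordinates are equal, which pins down the geometry of each subtree to that of a rescaled simplex. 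If one can show that the geometric length is a monotone function of the terminal Wiener index, or merely that the two functionals share the same minimizer over full topologies, then the exchange argument transfers and the conjecture follows. Establishing this monotonicity, essentially converting the combinatorial balance of \cite{Humphries_Wu_2013} into a metric inequality, is where I expect the real work to lie; the generalization to arbitrary $d \geq 3$ requires in addition that the seed optimal tree on the base simplex already be semi-regular, which should follow from the same argument applied in the base case.
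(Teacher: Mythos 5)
First, note that the statement you are asked to prove is stated in the paper as a \emph{conjecture}: the authors do not prove it, and indeed establishing it would resolve Smith's Conjecture~\ref{conj: top of regular simplex}, which has been open since 1992. So the question is whether your proposal actually closes that gap, and it does not.

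Your first stage is sound: a full tree in which every Steiner point has degree three and every pair of incident edges meets at exactly $120\degree$ is a critical point of the (convex, for fixed topology) length functional, hence by Theorem~\ref{unique} it is the relatively minimal tree of its topology; Theorem~\ref{twice}, Corollary~\ref{cor: twice rep}, and Theorem~\ref{orphanlength} supply the nondegeneracy. The genuine gap is your second stage. You reduce the conjecture to the claim that a semi-regularizing subtree swap never increases, and generically strictly decreases, the length of the relatively minimal realization, and you then concede that proving this --- i.e., converting the combinatorial balance statement of \cite{Humphries_Wu_2013} about the terminal Wiener index into a metric inequality about Euclidean length --- ``is where I expect the real work to lie.'' That deferred step \emph{is} the conjecture. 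There is no known monotone relationship between the terminal Wiener index and the length of the relatively minimal tree; the paper presents the coincidence of minimizers only as heuristic motivation for Conjecture~\ref{conj: top of regular simplex}, not as evidence of a functional inequality. Moreover, after a swap the relatively minimal tree must be re-solved globally, and Lemma~\ref{symmetries} pins down centroid and symmetry constraints only for induced full binary subtrees with terminal leaves, which a non-semi-regular topology need not possess in the relevant places, so the ``rescaled simplex'' reduction you invoke to compare edge contributions is not available in general. Without the swap inequality the iteration argument neither terminates at the semi-regular topology nor certifies optimality at the end, so the proposal is a plausible research program but not a proof.
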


The construction is closely related to the construction in \cite{gilchung}. The outcome in both cases is that, instead of considering every terminal, it is enough to represent each full binary tree by the centroid of its terminals. In essence, this is the second property of optimal Steiner trees of the regular simplex that we formalize in Lemma \ref{symmetries}. It is not surprising to notice that the asymptotic length of our constructions are the same (although our constructions match the conjecture of Smith for all $d$, unlike \cite{gilchung}):

\begin{proposition}
\label{limit}
    Let $T_0$ be a Steiner tree of the regular $d$-simplex. Let $\{T_k\}_{k=0}^{\infty}$ be the sequence of Steiner trees of the regular simplex created by repeatedly applying Theorem \ref{twice} to $T_0$ and let $\ell_k$ denote the Steiner ratio for $T_k$. If $\lim_{k \to \infty} \ell_k$ exists, then $\lim_{k \to \infty} \ell_k=\frac{\sqrt{3}}{\sqrt{2}(2\sqrt{2}-1)}$.
\end{proposition}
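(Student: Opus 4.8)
The plan is to reduce the statement to the analysis of a single linear recurrence for the \emph{total length} of $T_k$, obtained by tracking how the doubling procedure of Theorem~\ref{twice} transforms edge lengths. Write $n_k = 2^k d$ for the number of terminals of $T_k$ and $L_k$ for the total length of $T_k$. Since every pair of terminals of the regular $n_k$-simplex lies at distance $\sqrt 2$, its minimum spanning tree consists of $n_k-1$ edges each of length $\sqrt 2$, so the spanning tree cost is $M_k = (n_k-1)\sqrt 2$ and $\ell_k = L_k/M_k$. The heart of the argument is to show
\begin{equation*}
L_{k+1} = \frac{1}{\sqrt 2}\,L_k + \frac{\sqrt 3}{\sqrt 2}\,n_k .
\end{equation*}

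To establish this recurrence I would partition the edges of $T_k$ (a full Steiner tree, by Corollary~\ref{leaves}) into the $n_k$ terminal-to-Steiner edges and the remaining Steiner-to-Steiner edges, of total lengths $C_k$ and $A_k$ respectively, so that $L_k = A_k + C_k$. In the doubling procedure, every Steiner-to-Steiner edge $(\mathbf{r},\mathbf{s})$ becomes $(\mathbf{r'},\mathbf{s'})$; since the split is a linear map scaling norms by $1/\sqrt 2$ (as in the proof of Lemma~\ref{spl}), these edges contribute $\frac{1}{\sqrt 2}A_k$ to $L_{k+1}$. Each terminal-to-Steiner edge $\mathbf{e_i}\mathbf{s_i}$ of length $\lambda_i := \norm{\mathbf{e_i} - \mathbf{s_i}}$ is replaced by the Steiner tree of the isosceles triangle $\triangle \mathbf{e_{2i-1}}\mathbf{e_{2i}}\mathbf{s'_i}$. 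Using that the midpoint $\mathbf{c_i}$ of $\mathbf{e_{2i-1}}\mathbf{e_{2i}}$ is the split of $\mathbf{e_i}$, so that $\norm{\mathbf{c_i} - \mathbf{s'_i}} = \lambda_i/\sqrt 2$ and $\norm{\mathbf{c_i} - \mathbf{e_{2i-1}}} = 1/\sqrt 2$, and that the Fermat point $\mathbf{x_i}$ lies at distance $1/\sqrt 6$ from $\mathbf{c_i}$ along $\mathbf{c_i}\mathbf{s'_i}$ (exactly as in Theorem~\ref{twice}), the three legs have lengths $\sqrt{2/3}$, $\sqrt{2/3}$, and $\lambda_i/\sqrt 2 - 1/\sqrt 6$, which sum to $(\sqrt 3 + \lambda_i)/\sqrt 2$. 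Summing over all $n_k$ terminals gives $\frac{\sqrt 3}{\sqrt 2}n_k + \frac{1}{\sqrt 2}C_k$, and adding the Steiner-to-Steiner contribution yields the displayed recurrence.

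Finally I would convert the recurrence into one for the ratios. Substituting $L_k = \ell_k(n_k-1)\sqrt 2$ and $n_{k+1}=2n_k$ gives
\begin{equation*}
\ell_{k+1} = \frac{\ell_k\bigl(1 - 1/n_k\bigr) + \sqrt 3/\sqrt 2}{\bigl(2 - 1/n_k\bigr)\sqrt 2}.
\end{equation*}
Assuming $\lim_{k\to\infty}\ell_k =: L^*$ exists, the shifted sequence $\ell_{k+1}$ has the same limit, and since $n_k\to\infty$ forces $1/n_k\to 0$, passing to the limit yields the fixed-point equation $L^* = \bigl(L^* + \sqrt 3/\sqrt 2\bigr)/(2\sqrt 2)$. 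Solving, $(2\sqrt 2 - 1)L^* = \sqrt 3/\sqrt 2$, i.e.\ $L^* = \frac{\sqrt 3}{\sqrt 2(2\sqrt 2 - 1)}$, as claimed.

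I expect the main obstacle to be the bookkeeping in the derivation of the recurrence: verifying that the triangle-replacement length collapses to the clean form $(\sqrt 3 + \lambda_i)/\sqrt 2$ uniformly over all terminals (whether or not $\mathbf{e_i}$ shares its Steiner neighbor with another terminal) and confirming that the Fermat points exist at every stage so that these lengths are well-defined; the latter is guaranteed by Theorem~\ref{orphanlength} together with Corollary~\ref{cor: twice rep}. Once the recurrence is in hand the passage to the limit is routine, and the hypothesis that the limit exists is used only in the final step. (One could in fact drop that hypothesis by solving the recurrence explicitly as $L_k = \alpha 2^k + O\bigl((1/\sqrt 2)^k\bigr)$, but the conditional statement keeps the argument short.)
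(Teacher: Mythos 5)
Your proposal is correct and follows essentially the same route as the paper: the same accounting of how doubling transforms the tree (Steiner--Steiner edges scale by $1/\sqrt{2}$ under splitting, each terminal edge is replaced by the three legs of the new triangle's Steiner tree, contributing $2\sqrt{2/3}-1/\sqrt{6}=\sqrt{3}/\sqrt{2}$ of new length per old terminal), yielding a recurrence equivalent to the paper's equation~(\ref{eq:recursive}), followed by the same fixed-point computation. Phrasing the recurrence in terms of the total length $L_k$ rather than directly in terms of $\ell_k$ is only a cosmetic difference.
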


\begin{proof}
    Suppose that we know $\ell_0$. Then we can recursively write 
    \begin{equation}
    \label{eq:recursive}
    \ell_{k+1}=\frac{\ell_k\left(d2^k-1\right)-\frac{d2^k}{\sqrt{6}}+d2^{k+1}\sqrt{\frac{2}{3}}}{\left(d2^{k+1}-1\right) \sqrt{2}}. \tag{\textasteriskcentered}
    \end{equation}
    The distance between all pairs of terminals is $\sqrt{2}$, yielding the denominator. The tree resultant from taking the split of every node in $T_k$ (and retaining the same topology) is nearly $T_{k+1}$ and has cost $\sqrt{2} \cdot \frac{\ell_k(d2^k-1)}{\sqrt{2}}$ by Lemma \ref{spl}. However, we do not include the entirety of each edge to the split of terminal $\mathbf{p}$ in $T_k$; we only continue along the edge to $\mathbf{p}$ to the Fermat point of the triangle with the two new terminals corresponding to $\mathbf{p}$. As argued in Corollary \ref{cor: twice rep}, this removes a length of $\frac{1}{\sqrt{6}}$ per terminal in $T_k$. Finally, for each terminal in $T_{k+1}$ we connect it to the Fermat point of its respective triangle. Each such edge is of length $\sqrt{\frac{2}{3}}$ as argued in Corollary \ref{cor: twice rep}. Combining these quantities yields the numerator.

    If we assume that there exists $\ell = \lim_{k\to\infty}\ell_k$, then by taking limits of both sides of (\ref{eq:recursive}), we get
    
    \begin{equation*}
        \ell = \frac{\ell}{2\sqrt{2}}-\frac{1}{4\sqrt{3}}+\frac{1}{\sqrt{3}}.
    \end{equation*}
    
    Therefore, by expressing $\ell$, we obtain $$\ell=\frac{\sqrt{3}}{\sqrt{2}\left(2\sqrt{2}-1\right)}.$$
\end{proof}

To show that the limit exists, it is enough to show that $\ell_0 > \frac{\sqrt{3}}{\sqrt{2}(2\sqrt{2}-1)}$---it then follows from the recursive formula that $\ell_k >  \frac{\sqrt{3}}{\sqrt{2}(2\sqrt{2}-1)}$ and that the sequence $\{\ell_k\}_{k=0}^\infty$ is strictly decreasing. This holds for $T_0$ being the numerically computed optimal Steiner tree of the regular $d$-simplex for all $3 \leq d \leq 12$, for example.

\section{Progress towards Conjecture \ref{conj: simplex is the best graph embed}} \label{sec: simplex is the best graph embed}
 In this section, we consider Conjecture \ref{conj: simplex is the best graph embed}. For ease of notation throughout, we use $f: G(V,E) \to \R^{|V|}$ to denote the embedding of each edge in $G$ as its characteristic vector (e.g., $(i,j)$ is mapped to $\mathbf{e_i} + \mathbf{e_j}$). As evidence of the efficiency of the Steiner trees of regular simplices, we observe the following lemma.

\begin{lemma} \label{lem: diam 2}
For any fixed $m \geq 1$, the graph of size $m$ whose embedding (as above) has the minimum cost Steiner tree has diameter at most $2$.
\end{lemma}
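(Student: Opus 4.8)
The plan is to argue by contradiction: suppose the optimal graph $G$ (the one whose embedding $f(G)$ minimizes the cost of its Steiner tree among all graphs with $m$ edges) has diameter at least $3$. I would then exhibit a different graph $G'$ with $m$ edges whose embedding admits a strictly cheaper Steiner tree, contradicting optimality. The natural candidate for $G'$ is obtained by a local rewiring that reduces the diameter while keeping the edge count fixed, and the key is to show that this rewiring can only decrease the Steiner tree cost.

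First I would fix some notation: since $G$ has diameter at least $3$, there exist vertices $u,v$ with $d_G(u,v) \geq 3$, so in particular there are edges $\{u,a\}$ and $\{v,b\}$ lying on a shortest $u$--$v$ path with $a \neq v$, $b \neq u$, and $\{a,b\} \notin E$. The embeddings of these edges are $\mathbf{e}_u + \mathbf{e}_a$ and $\mathbf{e}_v + \mathbf{e}_b$, which are at squared distance $4$ (they share no coordinate, being disjoint pairs of vertices far apart). The heuristic driving the whole section is that edges sharing a vertex embed to points at distance $\sqrt{2}$ (forming regular simplices), which are vastly more Steiner-efficient than far-apart points. So the plan is to replace the edge $\{v,b\}$ by an edge incident to the $u$-cluster — for instance replace $\{v,b\}$ with $\{u,c\}$ for a suitable new endpoint $c$ — decreasing the pairwise distances among the embedded edges and thereby shrinking the optimal Steiner tree. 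I would want to take a given optimal Steiner tree $T$ of $f(G)$ and explicitly modify it: move the terminal $f(\{v,b\})$ to its new position $f(\{u,c\})$ and show the tree cost does not increase, using that the new terminal is closer (in $\ell_2$) to the rest of the configuration.

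The cleanest way to make ``closer implies cheaper'' rigorous is probably to avoid reasoning about the global optimum directly and instead use a triangle-inequality / Lipschitz-type bound on how the optimal Steiner tree cost changes when one terminal is moved: if a single terminal is relocated by a displacement of length $\delta$, the optimal Steiner tree cost changes by at most $\delta$ (take the old optimal tree and reconnect the moved terminal, or subdivide). I would then need the relocation to strictly decrease cost, which requires a more careful argument than mere Lipschitz continuity — I would compare the distance from the old terminal to its nearest neighbor in $T$ against the distance from the new terminal to the configuration. Concretely, because the new edge shares the vertex $u$ with an existing edge, its embedding is at distance exactly $\sqrt 2$ from that neighbor, whereas from the far component the old terminal was at distance $\geq \sqrt 2$ and generically larger, so attaching the relocated terminal via the $\sqrt 2$ edge gives a tree of cost at most the old cost, with strict improvement available.

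The main obstacle I anticipate is ensuring the rewiring $G \to G'$ is both \emph{admissible} (yields a valid simple graph on $m$ edges without creating a multi-edge or losing the edge count) and \emph{strictly improving} rather than merely non-increasing. Finding an endpoint $c$ with $\{u,c\} \notin E$ may fail if $u$ is already adjacent to everything, so I would need a case analysis — possibly rewiring at $v$'s side instead, or rerouting through an intermediate vertex on the shortest path — and I would have to verify that diameter $\geq 3$ guarantees the existence of \emph{some} valid improving swap in every case. Establishing the strict inequality is the delicate part: a purely combinatorial distance-reduction argument shows the relevant $\ell_2$ distances do not increase, but translating a strict decrease in one edge's contribution into a strict decrease of the \emph{global} optimal Steiner tree cost requires showing the moved terminal is genuinely a leaf (or near-leaf) whose connecting edge can be shortened, which is where I would lean on the structural facts that optimal trees lie in the convex hull (Theorem~\ref{convexhull}) and that terminals meeting the max/min coordinate condition are leaves (Lemma~\ref{lem: max pt prop}).
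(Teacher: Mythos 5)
There is a genuine gap at the step you yourself flag as delicate: the strict (indeed, even non-strict) improvement from relocating a single terminal. Your surgery removes the leaf terminal $f(\{v,b\})$ together with its tree edge, of some length $\lambda$, and reattaches the new terminal $f(\{u,c\})$ at cost $\sqrt{2}$ (its distance to the nearest surviving terminal sharing the vertex $u$). This is an improvement only if $\lambda > \sqrt{2}$, but nothing forces that: in an optimal tree a terminal's edge to its adjacent Steiner point is typically much shorter than $\sqrt{2}$ (already for three mutually adjacent edges, whose embeddings form an equilateral triangle of side $\sqrt{2}$, the Fermat-point edges have length $\sqrt{2/3}$). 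Your comparison of ``new attachment at distance exactly $\sqrt{2}$'' against ``old terminal at distance $\geq\sqrt{2}$ from the far component'' compares the wrong quantities — the old terminal is also at distance $\sqrt{2}$ from the embeddings of edges adjacent to $\{v,b\}$ in the line graph, and its actual tree edge is shorter still. So the local one-edge rewiring can increase the cost, and the Lipschitz bound on moving one terminal (here the displacement is $2$, not small) gives nothing. The benefit of reducing the diameter is global, not local, and a single-terminal surgery does not capture it.

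The paper proves the stronger Lemma~\ref{lem: disjoint neighborhoods} by a different mechanism that avoids this problem entirely. Diameter $\geq 3$ gives two vertices $i,j$ with disjoint closed neighborhoods; one applies to \emph{every} point of the optimal tree $T$ (terminals and Steiner points alike) the coordinate map sending $x_i \mapsto \max(x_i,x_j)$ and $x_j \mapsto 0$. This simultaneously relocates all terminals coming from edges at $j$ onto the embeddings of the corresponding edges at $i$ (i.e., it realizes the contraction of $i$ and $j$, which keeps exactly $m$ edges precisely because the closed neighborhoods are disjoint), fixes the remaining terminals, and — by a short computation — does not increase the length of any edge of $T$, so the image of $T$ is a Steiner tree of $f(G')$ of no greater cost. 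Strictness then comes from analyzing the equality cases along a terminal's leaf edge (using Lemma~\ref{lem: max pt prop} and Lemma~\ref{lem: strict coordinate bounds} to rule out $s_i$ or $s_j$ vanishing at a Steiner point). If you want to salvage your outline, you would need to replace the single-edge swap by some such global non-expansive transformation of the whole tree; as written, the argument does not go through.
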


That is, the only embeddings of graphs that might have more efficient Steiner trees than the embedding of the star graph (which embeds as a regular simplex) have diameter at most $2$. We actually prove an even stronger result. 
\begin{lemma} \label{lem: disjoint neighborhoods}
Let $G = (V,E)$ be a graph with $|E| = m$ with two vertices with disjoint closed neighborhoods. Then, there exists some $G'(V', E')$ with $|E'| =m$ with all closed neighborhoods of vertices pairwise overlapping such that $f(G')$ has a Steiner tree of total length less than the total length of the optimal Steiner tree of $f(G)$.
\end{lemma}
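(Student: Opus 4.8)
The plan is to reduce the statement to a single \emph{merge} operation and then show that each merge strictly shortens the optimal Steiner tree. Throughout I treat a graph as its edge set, discarding isolated vertices (they contribute no edge and do not affect $f(G)$); the hypothesis then supplies two positive-degree vertices $u,v$ with $N[u]\cap N[v]=\emptyset$, which is the only case in which the conclusion can hold. Given such a pair, define $G^{\star}$ by replacing every edge $(v,y)$ incident to $v$ with the edge $(u,y)$. Because $N[u]\cap N[v]=\emptyset$, each $y\in N(v)$ is non-adjacent to $u$ and satisfies $y\ne u$, so no loops or parallel edges arise and $|E(G^\star)|=m$; the now-isolated $v$ is dropped. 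The resulting graph need not have all closed neighborhoods overlapping, but each merge strictly lowers the vertex count, so iterating the merge on any surviving disjoint pair terminates in a graph $G'$ with $m$ edges, no isolated vertices, and all closed neighborhoods pairwise overlapping. Since $G$ itself has a disjoint pair, at least one merge is performed, so once we know each merge is strict we obtain $\mathrm{cost}(f(G'))<\mathrm{cost}(f(G))$, as required.

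It remains to prove the \textbf{key claim}: merging a positive-degree disjoint pair $u,v$ strictly shortens the optimal Steiner tree. Partition $E$ into $A$ (edges at $u$), $B$ (edges at $v$), and $C$ (the rest). Since $\{u,v\}\notin E$ and $N(u)\cap N(v)=\emptyset$, every edge of $A$ is vertex-disjoint from every edge of $B$, and every edge of $C$ is incident to neither $u$ nor $v$. Hence, reading off the $(x_u,x_v)$ coordinates, $A$-terminals have $(1,0)$, $B$-terminals have $(0,1)$, and $C$-terminals have $(0,0)$, while the merge simply sends each $\mathbf{e}_v+\mathbf{e}_y$ to $\mathbf{e}_u+\mathbf{e}_y$ and fixes all other coordinates. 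I then introduce the \emph{fold} map $\phi\colon\R^{|V|}\to\R^{|V|}$ replacing $(x_u,x_v)$ by $(\max(x_u,x_v),\min(x_u,x_v))$ and fixing the remaining coordinates. Sorting two coordinates is a non-expansive fold, so $\phi$ is $1$-Lipschitz; it fixes every $A$- and $C$-terminal and sends each $B$-terminal exactly onto its merged image, so $\phi$ carries the terminals of $f(G)$ bijectively onto those of $f(G^\star)$. Applying $\phi$ to an optimal Steiner tree $T$ of $f(G)$ produces a connected network spanning $f(G^\star)$ of length at most that of $T$, giving the non-strict inequality. The disjoint-neighborhood hypothesis is used precisely here: it is exactly what places the $C$-terminals on the fold hyperplane and realizes the merge by $\phi$.

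For strictness I analyze how $T$ crosses the fold hyperplane $L=\{x_u=x_v\}$. By Lemma~\ref{lem: strict coordinate bounds} every Steiner point has $x_u,x_v\in(0,1)$, while $A$-terminals lie strictly in $\{x_u>x_v\}$ and $B$-terminals strictly in $\{x_u<x_v\}$. Connectivity forces a path from an $A$-terminal to a $B$-terminal, hence two consecutive vertices $p,q$ with $p\in\{x_u\ge x_v\}$ and $q\in\{x_u<x_v\}$. If $p$ is strictly in $\{x_u>x_v\}$, the edge $(p,q)$ has its endpoints strictly on opposite sides of $L$, so $\phi$ strictly contracts it and $\phi(T)$ is already strictly shorter than $T$. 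Otherwise $p\in L$, and in the full case $p$ is a Steiner point: writing its three unit edge-directions $d_1,d_2,d_3$, Corollary~\ref{coplanar} and Theorem~\ref{onetwenty} give $d_1+d_2+d_3=0$, whereas with $R$ the reflection across $L$, $n$ its unit normal, and $I\ne\emptyset$ the indices of edges pointing strictly into $\{x_u<x_v\}$, the folded directions satisfy $\sum_i d_i'=\sum_{i\in I}(Rd_i-d_i)=-2\big(\sum_{i\in I}\langle d_i,n\rangle\big)n\ne 0$. Thus $\phi(p)$ is an \emph{unbalanced} junction, so $\phi(T)$ is not the relatively minimal tree of its topology, and by Theorem~\ref{unique} the optimal Steiner tree of $f(G^\star)$ is strictly shorter than $\phi(T)$, hence than $T$.

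I expect the main obstacle to be making strictness rigorous in the \emph{non-full} case: when $T$ has a terminal of degree at least two, the first vertex $p\in L$ on the crossing path may be a $C$-terminal rather than a Steiner point, where Theorem~\ref{onetwenty} yields only the weaker $\ge 120^\circ$ condition instead of a balanced junction, and one must rule out that reflecting its $\{x_u<x_v\}$-edge keeps all incident angles $\ge 120^\circ$. A related technical point is that $\phi$ may identify two vertices of $T$, degrading the topology of $\phi(T)$; I would handle this by a small generic perturbation of $T$, or by first reducing to full Steiner components. Everything else—the reduction by iterated merges, the $1$-Lipschitz fold, and the non-strict inequality—is routine.
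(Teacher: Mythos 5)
Your proposal is correct in outline but takes a genuinely different route from the paper's proof, most visibly in two places. First, the map: the paper sends $(x_i,x_j)\mapsto(\max(x_i,x_j),0)$ and proves non-expansiveness by the explicit two-case inequality $(x_i-y_i)^2+(x_j-y_j)^2\geq(\max(x_i,x_j)-\max(y_i,y_j))^2$, whereas your fold $(x_u,x_v)\mapsto(\max(x_u,x_v),\min(x_u,x_v))$ agrees with the paper's map on every terminal (terminals always have $\min(p_u,p_v)=0$) but is immediately $1$-Lipschitz as a reflection-fold, so the non-strict bound comes for free. Second, strictness: the paper extracts it locally, by characterizing the equality case of its inequality (equality forces $\min(x_i,x_j)=\min(y_i,y_j)$) and applying it to a terminal--Steiner-point edge, where the terminal has $\min(p_i,p_j)=0$ but Lemma~\ref{lem: strict coordinate bounds} forces both Steiner coordinates to be positive; you instead argue globally along an $A$-to-$B$ crossing of the fold hyperplane, getting either a strictly contracted edge or an unbalanced junction that violates the first-order optimality condition coming from Theorem~\ref{onetwenty} and Corollary~\ref{coplanar}. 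Both strictness arguments are sound; the paper's is shorter, yours is more geometric. You are also more careful than the paper about iterating the merge until all closed neighborhoods pairwise overlap, which the lemma statement actually requires and which the paper's single-contraction proof glosses over.

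The obstacle you flag at the end is not actually there. Every terminal of $f(G)$ is coordinatewise extremal (it attains the maximum in two coordinates and the minimum in all others), so by Lemma~\ref{lem: max pt prop} every terminal is a leaf of every optimal Steiner tree of $f(G)$. Hence every internal vertex of the path from an $A$-terminal to a $B$-terminal is a Steiner point, the crossing vertex $p$ is either the initial $A$-terminal (strictly off the hyperplane, so your Case~1 applies) or a Steiner point, and the $C$-terminal case you worry about never occurs; in particular the tree is automatically full and your ``non-full case'' is vacuous. Two smaller points to tighten: the identification of vertices under $\phi$ is harmless for the non-strict bound, since the optimal Steiner tree length is an infimum over all connecting networks, and if two neighbors of $\phi(p)$ coincide one deletes a duplicated edge and is strictly shorter anyway; and the strict-shortening step at an unbalanced junction should be justified by the first-variation computation (the gradient of total length at a Steiner point is $-\sum_i d_i$) rather than by Theorem~\ref{unique}, which only asserts uniqueness of the relatively minimal tree.
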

\begin{proof}
Without loss of generality, let $V  = [n]$. Let $i$ and $j$ be two vertices with disjoint neighborhoods. This implies that the two sets of endpoints of edges incident to $i$ and $j$ are disjoint. 

Now let $T$ be an optimum Steiner tree for $f(G)$ ($f$ is the embedding function described at the beginning of this section and in Section \ref{sec: intro}). We transform $T$ into a lower total length Steiner tree on $f(G')$ for $G'$ a graph with the neighborhoods of any pair of vertices overlapping.

For each point $\mathbf{x} = (x_1, x_2, \ldots x_n)$ in $T$, set the $i$\textsuperscript{th} coordinate equal to $\max(x_i, x_j)$ and then set the $j$\textsuperscript{th} coordinate equal to $0$. First, note that this operation maps each embedding of an edge incident to $j$, $f(k,j)$, to a distinct embedding of an edge incident to $i$ (namely, the embedding of the edge $(k,i)$). The resultant collection of embedded edges is the result of embedding $G$ after contracting the vertices $i$ and $j$ (call the contraction of $i$ and $j$ the graph $G'$): there are no lost or repeated edges exactly because $i$ and $j$ have disjoint closed neighborhoods. Second, this fixes all the other embedded edges in the configuration.

Now, consider two points $\mathbf{x} = (x_1, x_2, \ldots x_n)$ and $\mathbf{y} = (y_1, y_2, \ldots y_n)$ in $T$ with an edge between them. We want to show that the distance between them has not increased as a result of this map. The difference in each coordinate other than the $i$\textsuperscript{th} and $j$\textsuperscript{th} coordinates is fixed. So, it suffices to show that 
\[
    (x_i - y_i)^2 + (x_j - y_j)^2 \geq (\max(x_i, x_j) - \max(y_i, y_j))^2.
\]
Expanding both sides, we have to show that
\[
  x_i^2 + x_j^2 + y_i^2 + y_j^2   - 2x_iy_i - 2x_jy_j 
  \geq \max(x_i, x_j)^2 + \max(y_i, y_j)^2 - 2\max(x_i, x_j)\max(y_i, y_j).
\]
We have two cases to consider. First, suppose $\max(x_i, x_j) = x_i$ and $\max(y_i, y_j) = y_i$. Then we have
\begin{align*}
  (x_i - y_i)^2 + (x_j - y_j)^2
  &= (\max(x_i, x_j) - \max(y_i, y_j))^2 + (x_j - y_j)^2 \\
  &\geq (\max(x_i, x_j) - \max(y_i, y_j))^2.
\end{align*}
Notably, we have an equality in the second line only if $x_j = y_j$. The case of the maxima in the $j$\textsuperscript{th} coordinates follows symmetrically (with equality only if $x_i = y_i$).

Now suppose $\max(x_i, x_j) = x_i$ and $\max(y_i, y_j) = y_j$. First note that we have
\[
    (x_i - x_j)y_j \geq (x_i - x_j)y_i
  \]  
since $x_i \geq x_j$ and $y_j \geq y_i \geq 0$ (since $\mathbf{x}$ and $\mathbf{y}$ are in the convex hull of $f(G)$ by Theorem \ref{convexhull}). This implies
\begin{equation} \label{eqtn: swapped indices greater than same}
  2x_iy_j + 2x_jy_i \geq 2x_iy_i + 2x_jy_j.  
\end{equation}
 Now, 
 \begin{align*}
    x_i^2 + x_j^2 + y_i^2 + y_j^2   - 2x_iy_i - 2x_jy_j 
  &= \max(x_i, x_j)^2 + \max(y_i, y_j)^2 + x_j^2 + y_i^2  - 2x_iy_i - 2x_jy_j \\
  &= \max(x_i, x_j)^2 + \max(y_i, y_j)^2 + (x_j - y_i)^2 + 2x_jy_i  - 2x_iy_i - 2x_jy_j.
 \end{align*}
 Then, (\ref{eqtn: swapped indices greater than same}) implies that 
 \[
  (x_i - y_i)^2 + (x_j - y_j)^2 \geq (\max(x_i, x_j) - \max(y_i, y_j))^2,
 \]
 with equality exactly when $x_i = y_i = x_j$  or $x_j = y_j = y_i$. 
 The case of $\max(x_i, x_j) = x_j$ and $\max(y_i, y_j) = y_i$ follows by symmetry.
 
Finally, note that equality holds in either case only when the smaller of the $i$\textsuperscript{th} and $j$\textsuperscript{th} coordinates of the two points are of equal magnitude. But, consider a Steiner point $\mathbf{s}$ adjacent to terminal node $\mathbf{p}$. Such an incidence must occur by Lemma \ref{lem: max pt prop}. Since $i$ and $j$ are non-adjacent in $G$, either $p_i$ or $p_j$ are $0$. But then in particular $\min(p_i,p_j) = 0$, so, in order to have equality in the above, $s_i$ or $s_j$ must equal $0$, contradicting optimality as a result of Lemma \ref{lem: strict coordinate bounds}.
\end{proof}

\section*{Acknowledgments}

This work was carried out while the authors Guillermo A. Gamboa Q., Josef Matějka, and Jakub Petr were participants in the 2023 DIMACS REU program at Rutgers University, supported by CoSP, a project funded by European Union’s Horizon 2020 research and innovation programme, grant agreement No. 823748.
Karthik C.\ S.\ is supported by the National Science Foundation under Grant CCF-2313372 and by the Simons  
 Foundation, Grant Number 825876, Awardee Thu D. Nguyen.

\bibliographystyle{alpha}

\bibliography{references}

\newpage

\appendix

\section{Inapproximability of the Euclidean Steiner Tree problem} \label{sec: simplicial complex conj}
In this section we formalize the reduction strategy for showing \apx-hardness of the Euclidean Steiner tree sketched in Section~\ref{sec: intro}.

Informally, we conjecture that regular simplicial complexes admit more efficient Steiner trees when they are composed of fewer simplices. Formally, we conjecture the following.

\begin{conjecture}[Euclidean Steiner Tree for Regular Simplicial Complexes] \label{con: simplicial complex Steiner trees}
For all constants $r \in (0,1)$ and $ \alpha \in (0, 1/r - 1)$, there exist constants $s, \beta > 0$ and $M \in \mathbb{Z}^+$ sufficiently large so that, for all $m \geq M$, given a regular, unit, simplicial complex on $m$ vertices:
\begin{enumerate}
    \item \textbf{Completeness:} If the vertices can be partitioned into the vertices of at most $rm$ unit, regular simplices, then the point configuration of the $m$ vertices admits an Euclidean Steiner tree of cost at most $sm$. 
    \item \textbf{Soundness:} If the vertices cannot be partitioned into the vertices of fewer than $(1 + \alpha)rm$ unit, regular simplices, then the point configuration of the $m$ vertices does not admit an Euclidean Steiner tree of cost less than $(1 + \beta)sm$.
\end{enumerate}
\end{conjecture}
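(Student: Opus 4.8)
The plan is to prove the two halves of Conjecture~\ref{con: simplicial complex Steiner trees} by very different means: \textbf{completeness} by an explicit construction, and \textbf{soundness} by a charging lower bound, with the latter being where the genuine difficulty lies. Throughout I identify the $m$ complex-vertices with the edges of the underlying triangle-free graph $G$, each edge $\{u,v\}$ embedded as $\mathbf{e}_u+\mathbf{e}_v$, and recall that all pairwise distances among embedded points are $\sqrt2$ (when the edges share a graph-vertex) or $2$ (when they are disjoint). A partition into $k$ unit regular simplices is exactly a vertex cover $C$ of size $k$ together with an assignment of each embedded edge to one covering endpoint.

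For completeness, I would start from a partition into $k\le rm$ simplices and build a candidate-optimal Steiner tree inside each simplex using the doubling construction of Theorem~\ref{twice}, whose cost is at most $c\sqrt2\,(d_w-1)$ for a uniform Steiner-ratio bound $c<1$ (cf.\ Proposition~\ref{limit}); here the $w$-simplex has $d_w$ vertices and $\sum_w d_w=m$, so the intra-simplex cost totals at most $c\sqrt2\,(m-k)$. I would then connect the $k$ simplices into a single tree along $k-1$ inter-simplex edges, each of length at most $2$, preferring length-$\sqrt2$ connectors whenever two simplices contain embedded edges sharing a graph-vertex (connectivity of $G$ guarantees a spanning choice). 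Summing yields total cost at most $c\sqrt2(1-r)m+2rm$, so setting $s:=c\sqrt2(1-r)+2r$ (with an arbitrarily small slack absorbed by taking $M$ large) gives the required bound $sm$. This step is essentially routine given the constructions and structural facts already in hand.

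For soundness I would argue the contrapositive: from any Steiner tree $T$ of cost below $(1+\beta)sm$, extract a partition into fewer than $(1+\alpha)rm$ simplices. The strategy is a charging scheme. By Lemma~\ref{lem: max pt prop} every extreme embedded point is a leaf, and by Corollary~\ref{cor: strict containment in Convex Hull} all Steiner points lie strictly inside the hull (a slice of $\sum_i x_i=2$). I would then use the local geometry forced by Theorems~\ref{onetwenty} and~\ref{SQ6/2}, together with the terminal-edge lower bound Theorem~\ref{orphanlength}, to cut $T$ at its long edges, grouping terminals into clusters, and show that each cluster that is a single simplex of size $d$ contributes length at least $\approx\tfrac{\sqrt3}{\sqrt2(2\sqrt2-1)}\sqrt2\,(d-1)$ while every boundary between clusters forces an additional $\Omega(1)$. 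Calibrating these per-cluster and per-boundary costs against the completeness construction, I would obtain a lower bound on $\mathrm{cost}(T)$ strictly increasing in the number $\tau$ of clusters, and conclude that $\tau<(1+\alpha)rm$ whenever $\mathrm{cost}(T)<(1+\beta)sm$, with $\beta$ an explicit increasing function of $\alpha$.

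The hard part is exactly this soundness lower bound. What it truly requires is a \emph{quantitative} form of the principle that regular simplices give the most efficient Steiner trees among configurations of a given size---the content of Conjectures~\ref{conj: simplex is the best} and~\ref{conj: simplex is the best graph embed}. The obstacle is that while the tools of Sections~\ref{sec: new props} and~\ref{sec: constructing trees} pin down the \emph{local} behaviour of an optimal tree (degree-$3$ coplanar Steiner points, $120\degree$ angles, edge-length bounds), they give no purchase on the \emph{global} optimization over topologies when many overlapping simplices interact, so I cannot presently rule out a cheap ``non-simplicial'' tree that connects the complex more cleverly than any simplex-respecting one. I expect real progress to hinge on first resolving Conjecture~\ref{conj: simplex is the best graph embed} (or a fractional relaxation of it), which would supply precisely the missing monotonicity of Steiner cost in the number of constituent simplices; absent that, the charging argument yields only a weak and likely non-tight bound that fails to separate completeness from soundness by a constant factor.
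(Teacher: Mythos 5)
The statement you are trying to prove is not proven anywhere in the paper: it is Conjecture~\ref{con: simplicial complex Steiner trees}, stated and left open. The paper's only result about it is Theorem~\ref{thm: sufficient for apx hardness of Euclidean CST}, which shows that the conjecture \emph{implies} \apx-hardness of the Euclidean Steiner Tree problem; the machinery there (the embedding $f$, Claims~\ref{cla: unit claim 1} and~\ref{cla: unit claim 2} identifying partitions into unit simplices with vertex covers of a triangle-free graph) is what your opening paragraph reconstructs, but it is the consumer of the conjecture, not a proof of it. So there is no paper proof to compare against, and the question is whether your proposal closes the open problem.

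It does not, and you say so yourself: the soundness half is exactly the open content of the conjecture, and your charging scheme presupposes a quantitative version of Conjectures~\ref{conj: simplex is the best} and~\ref{conj: simplex is the best graph embed}, which are also open. The structural results you cite (Theorems~\ref{onetwenty}, \ref{SQ6/2}, \ref{orphanlength}, Lemma~\ref{lem: max pt prop}, Corollary~\ref{cor: strict containment in Convex Hull}) constrain the \emph{local} geometry of an optimal tree but give no lower bound on its cost as a function of the minimum partition number, which is what soundness demands. Two further points weaken even the completeness half as written. First, the ``uniform Steiner-ratio bound $c<1$'' is not available from the paper: Theorem~\ref{twice} and Corollary~\ref{cor: twice rep} only produce candidate trees for sizes of the form $d\cdot 2^k$ from a known base tree, and Proposition~\ref{limit} is conditional on the limit existing; for an upper bound you could instead just take the MST of each simplex, at cost $\sqrt{2}(d_w-1)$. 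Second, and more importantly, the two halves are coupled: the \emph{same} $s$ must serve as the completeness upper bound and, inflated by $(1+\beta)$, as the soundness lower bound. Since configurations in the soundness case still admit partitions into roughly $(1+\alpha)rm$ simplices, their optimal cost is at most about $sm + \alpha r(2-c\sqrt2)m$; if your completeness constant $s$ is not essentially tight, the required lower bound $(1+\beta)sm$ can simply be false. So $s$ cannot be chosen as a generous constant from an ad hoc construction and then forgotten, which is how your sketch treats it. Finally, note that you restrict throughout to complexes arising as embeddings of triangle-free graphs, whereas the conjecture quantifies over all regular unit simplicial complexes; the paper does remark that this weaker form suffices for the hardness application, so this last point is minor compared to the missing soundness argument.
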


In the above, for simplicity, we consider a single point to be a regular, unit simplex with one vertex. Conjecture \ref{con: simplicial complex Steiner trees} is entirely analytical; it does not directly involve any computation. Nonetheless, we show that if Conjecture \ref{con: simplicial complex Steiner trees} holds (or indeed a somewhat weaker conjecture holds), then the Euclidean Steiner tree problem is \apx-hard.
\begin{theorem} \label{thm: sufficient for apx hardness of Euclidean CST}
Conjecture \ref{con: simplicial complex Steiner trees} implies that the Euclidean Steiner tree problem is \apx-hard.
\end{theorem}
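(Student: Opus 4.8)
The plan is to establish \apx-hardness by exhibiting a gap-preserving reduction from the Vertex Cover problem on bounded-degree triangle-free graphs, using Conjecture~\ref{con: simplicial complex Steiner trees} as the analytic engine that translates the combinatorial gap into a geometric one. First I would fix the source of hardness: by a standard result (e.g.\ the reduction underlying Trevisan~\cite{Trevisan00}), there exist constants $r \in (0,1)$ and $\rho > 0$ such that, given a bounded-degree triangle-free graph $G = (V,E)$ with $|E| = m$, it is \np-hard to distinguish the case where $G$ has a vertex cover of size at most $rm$ from the case where every vertex cover has size at least $(1+\rho)rm$. I would apply the embedding $f$ from Section~\ref{sec: intro}, sending each edge $\{u,v\}$ to its characteristic vector $\mathbf{e_u} + \mathbf{e_v}$, so that the image of the edge set forms the vertices of a regular, unit simplicial complex on $m$ vertices.

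The conceptual heart of the reduction is the bijection between vertex covers of $G$ and partitions of the simplicial complex into regular unit simplices. A vertex cover $C$ induces such a partition: each edge is assigned to one of its covering endpoints, and the edges assigned to a single vertex $v$ map (under $f$) to points all at pairwise distance $\sqrt{2}$, i.e.\ the vertices of a regular simplex. Conversely, any partition into regular simplices induces a vertex cover of the same size; here I would invoke triangle-freeness of $G$ to ensure that any maximal set of pairwise-$\sqrt{2}$-equidistant embedded points must arise from edges sharing a common vertex (three mutually adjacent embedded edges would force a triangle in $G$). Thus the minimum partition size equals the minimum vertex cover size, and the completeness/soundness thresholds $rm$ and $(1+\rho)rm$ transfer directly to the partition problem. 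Setting $\alpha = \rho$ (noting $\rho < 1/r - 1$ can be arranged, since a vertex cover never exceeds $m$), Conjecture~\ref{con: simplicial complex Steiner trees} then yields constants $s, \beta > 0$ and a threshold $M$ so that the completeness instances admit a Steiner tree of cost at most $sm$ while the soundness instances require cost at least $(1+\beta)sm$.

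I would conclude by assembling the gap: deciding whether the embedded point configuration has an optimal Euclidean Steiner tree of cost at most $sm$ or at least $(1+\beta)sm$ is \np-hard, since a polynomial-time algorithm for that promise problem would decide the original Vertex Cover gap problem. Because the embedding $f$ is polynomial-time computable and the gap $(1+\beta)$ is a fixed constant, this shows the Euclidean Steiner Tree problem is \np-hard to approximate within a factor less than $1+\beta$, hence \apx-hard. The main obstacle I anticipate is not in the reduction bookkeeping but in ensuring the parameters line up across the two gap problems: one must check that the constant $\alpha$ demanded by the conjecture can be matched to the hardness gap $\rho$ available from Vertex Cover, and that the completeness bound $sm$ of the conjecture is consistent with an \emph{explicit} Steiner tree construction (via assigning each simplex its own efficient tree and linking them). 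A secondary subtlety worth isolating is the soundness direction of the partition-to-cover correspondence, where triangle-freeness is essential and must be used carefully to rule out regular simplices that do not correspond to a single graph vertex.
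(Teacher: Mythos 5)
Your proposal is correct and follows essentially the same route as the paper: a gap-preserving reduction from Vertex Cover on triangle-free graphs via the characteristic-vector embedding, with triangle-freeness used exactly as in the paper's soundness claims to show that any partition of the embedded points into regular unit simplices induces a vertex cover of the same size, and the conjecture then supplying the geometric gap $(s, \beta)$ for the combinatorial gap $(r, \alpha)$. The only cosmetic difference is that the paper normalizes the embedding by $1/\sqrt{2}$ so the simplices are unit (matching the conjecture's phrasing), which your version would need as well.
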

\begin{proof}
    We reduce from the Vertex Cover problem on triangle-free graphs. From \cite{Lee_Schmidt_Wright_2017}, there exists $r \in (0,1)$ and $ \alpha \in (0, 1/r - 1)$, and a family of $m$-edge, $n$-node graphs such that the following decision problem is \np-hard (where $n$ is a fixed function of $m$). 
Given an input graph $G$, decide which of the following cases holds.
\begin{itemize}
    \item \textbf{Completeness:} There exists a vertex cover of $G$ of size $rm$. 
    \item \textbf{Soundness:} All vertex covers of $G$ are of size at least $(1 + \alpha)rm$.
\end{itemize}

We will now describe a reduction from the Vertex Cover problem on triangle-free graphs to the Euclidean Steiner tree problem. Define $f_n : [n]^2 \to \R^n$ where $f_n(i,j) = \frac{1}{\sqrt{2}} \cdot (\mathbf{e_i} + \mathbf{e_j})$, the sum of the $i$\textsuperscript{th} and $j$\textsuperscript{th} standard basis vectors. Since the choice of domain will always be clear from context, we will abuse notation and denote $f_n$ by $f$. For a graph $G$ of order $n$, let 
\[
f(G) = f(E(G)) = \{f(i,j) \,:\, (i,j) \in E(G)\}.
\]

Now, given an input graph $G$ to the Vertex Cover problem on triangle-free graphs described above, our corresponding instance of the Euclidean Steiner Tree problem will be the instance on the terminal set $f(G)$. This mapping takes $O(\poly(m))$ time. Observe that $f(G)$ is exactly the collection of vertices of a regular, unit simplicial complex on $m$ vertices. Let $s$ and $\beta$ be as in Conjecture \ref{con: simplicial complex Steiner trees} for $r$ and $\alpha$ as in Conjecture \ref{con: simplicial complex Steiner trees} and $m$ sufficiently large.

\paragraph{Completeness.}
If the completeness case holds, i.e., $G$ admits a vertex cover $C$ of size $rm$, then the points in $f(G)$ can be partitioned into the vertices of at most $rm$ regular, unit simplices. Namely, if vertex $i \in C$, then the collection of points $S_i = \{f(i, j) \,:\, (i,j) \in E\} \subseteq f(G)$ (the embeddings of each edge incident to vertex $i$) forms the vertices of a regular, unit simplex. Since $C$ is a vertex cover of $G$, every edge in $G$ is incident to some vertex in $C$. Namely, $f(G) \subset \cup_{i \in C} S_i$. Since any subset of the vertices of a regular, unit simplex also forms the vertices of a regular, unit simplex (using the convention that a single vertex is the vertex of a regular, unit simplex on one vertex), any arbitrary partition of $f(G)$ among the $S_i$'s is a partitioning of $f(G)$ into the vertices of at most $rm$ unit, regular simplices. Hence, by the completeness case of Conjecture \ref{con: simplicial complex Steiner trees}, $f(G)$ admits an Euclidean Steiner tree of cost at most $sm$.

\paragraph{Soundness.} Now suppose that the soundness case holds, i.e., all vertex covers of $G$ are of size at least $(1 + \alpha)m$. We need to show that the points in $f(G)$ cannot be partitioned into the vertices of fewer than $(1 + \alpha)rm$ regular, unit simplices (and, hence, the soundness case of Conjecture \ref{con: simplicial complex Steiner trees} applies). To do this, we make a series of claims.

\begin{claim} \label{cla: unit claim 1}
For $\{i,j\}, \{k,\ell\} \in E(G)$ such that $\{i,j\} \cap \{k, \ell\} = \emptyset$, $f(i,j)$ and $f(k, \ell)$ cannot belong to the same regular, unit simplex in any partition of $f(G)$ into the vertices of regular, unit simplices.
\end{claim}
\begin{proof}
Note that $\norm{f(i,j) - f(k, \ell)}_2 = \sqrt{2} \neq 1$. 
\end{proof}
\begin{claim} \label{cla: unit claim 2}
For $S \subset E(G)$ with $|S| \neq \emptyset$ such that $\cap_{e \in S} e = \emptyset$, the points in $f(S)$ cannot all belong to the same regular, unit simplex in any partition of $f(G)$ into the vertices of regular, unit simplices.
\end{claim}
\begin{proof}
The case of $|S| = 1$ is trivial and the case of $|S| = 2$ follows immediately from Claim \ref{cla: unit claim 1}.
 
Now assume that $|S| \geq 3$. Suppose that $e_1 = \{i,j\} \in S$. By Claim \ref{cla: unit claim 1}, for all $e \in S$, either $i \in e$ or $j \in e$. Since $\cap_{e \in S} e = \emptyset$, there exists $e_2 \in S$ such that $i \in e_2$ and $j \not \in e_2$  and $e_3 \in S$ such that $i \not\in e_3$ and $j \in e_3$. Now, by Claim \ref{cla: unit claim 1} again, $e_2$ and $e_3$ must share a vertex, so $e_2 = \{i,k\}$ and $e_3 = \{j,k\}$. But, $G$ is triangle-free and $e_1, e_2, $ and $e_3$ form a triangle, yielding a contradiction.
\end{proof}

Now, Claim \ref{cla: unit claim 2} implies that in any partition $E_1 \sqcup E_2 \cdots $ of $E(G)$ corresponding to a partition of $f(G)$ into the vertices of regular, unit simplices, for each part $E_i$, there exists $v_i \in V$ such that $v_i \in \cap_{e \in E_i} e$. Indeed, the $v_i$'s form a vertex cover of $G$, implying that $G$ has a vertex cover of size at most the size of the partition of $f(G)$ into the vertices of regular, unit simplices. Hence, by our assumption in the soundness case of our hard instance of the Vertex Cover problem, the vertices in $f(G)$ cannot be partitioned into the vertices of fewer than $(1 + \alpha) rm$ unit, regular simplices. Then, by the soundness case of Conjecture \ref{con: simplicial complex Steiner trees}, $f(G)$ does not admit an Euclidean Steiner Tree of cost less than $(1 + \beta)sm$ in this case. \\

Combining the analysis of the completeness and soundness cases, the Euclidean Steiner Tree problem is \np-hard to approximate within a factor of less than $(1 + \beta)$, yielding the desired result.
\end{proof}
Note that we only used a weaker version of Conjecture \ref{con: simplicial complex Steiner trees} to prove Theorem \ref{thm: sufficient for apx hardness of Euclidean CST}. Indeed, we really only need that $s$ and $\beta$ exist for $r$ and $\alpha$ induced by the inapproximability of Vertex Cover on triangle-free graphs.
\end{document}